\theoremstyle{definition}
\newtheorem{df}{Definition}[section]
\theoremstyle{plain}
\newtheorem{thm}[df]{Theorem}
\newtheorem{prop}[df]{Proposition}
\newtheorem{lem}[df]{Lemma}
\newtheorem{cor}[df]{Corollary}
\theoremstyle{definition}
\newtheorem{rem}[df]{Remark}
\newcommand{\N}{\mathbb{N}}
\newcommand{\Z}{\mathbb{Z}}
\newcommand{\R}{\mathbb{R}}
\newcommand{\C}{\mathbb{C}}
\newcommand{\Ha}{\mathbb{H}}
\newcommand{\ib}{\mathbf{i}}
\newcommand{\jb}{\mathbf{j}}
\newcommand{\kb}{\mathbf{k}}
\newcommand{\xb}{\mathbf{x}}
\newcommand{\yb}{\mathbf{y}}
\newcommand{\im}{\sqrt{-1}}
\newcommand{\Ad}{\mathop{\mathrm{Ad}}\nolimits}
\newcommand{\Ann}{\mathop{\mathrm{Ann}}\nolimits}
\newcommand{\Ass}{\mathop{\mathrm{Ass}}\nolimits}
\newcommand{\cpt}{\mathop{\mathrm{c}}\nolimits}
\newcommand{\Dim}{\mathop{\mathrm{Dim}}\nolimits}
\newcommand{\gr}{\mathop{\mathrm{gr}}\nolimits}
\newcommand{\Heis}{\mathop{\mathrm{Heis}}\nolimits}
\newcommand{\Hom}{\mathop{\mathrm{Hom}}\nolimits}
\newcommand{\Int}{\mathop{\mathrm{Int}}\nolimits}
\newcommand{\Ker}{\mathop{\mathrm{Ker}}\nolimits}
\newcommand{\Lie}{\mathop{\mathrm{Lie}}\nolimits}
\newcommand{\proj}{\mathop{\mathrm{proj}}\nolimits}
\newcommand{\rank}{\mathop{\mathrm{rank}}\nolimits}
\newcommand{\sgn}{\mathop{\mathrm{sgn}}\nolimits}
\newcommand{\Span}{\mathop{\mathrm{span}}\nolimits}
\newcommand{\spl}{\mathop{\mathrm{s}}\nolimits}
\newcommand{\semi}{\mathop{\mathrm{ss}}\nolimits}
\newcommand{\sym}{\mathop{\mathrm{sym}}\nolimits}
\newcommand{\Tr}{\mathop{\mathrm{Tr}}\nolimits}
\newcommand{\triv}{\mathop{\mathrm{triv}}\nolimits}
\newcommand{\rO}{\mathrm{O}}
\newcommand{\SL}{\mathrm{SL}}
\newcommand{\SU}{\mathrm{SU}}
\newcommand{\SO}{\mathrm{SO}}
\newcommand{\g}{\mathfrak{g}}
\DeclareMathOperator{\GL}{GL}
\newcommand{\Addresses}{{
\bigskip
\footnotesize
\textsc{Faculty of Science, Hokkaido University, Kita 10, Nishi 8, Kita-Ku, Sapporo, Hokkaido, 060-0810, Japan}\par\nopagebreak
  \textit{E-mail}: \texttt{tamori@math.sci.hokudai.ac.jp}
}}
\title[
Classification of minimal representations 
of 
Lie groups of type A
]
{Classification of irreducible $(\mathfrak{g},\mathfrak{k})$-modules associated to the ideals of minimal nilpotent orbits for simple Lie groups of type $A$
}
\author[H. Tamori]{Hiroyoshi TAMORI}
\date{}
\newcommand{\keyword}[1]{\textit{Key Words and Phrases:} #1}
\newcommand{\MSC}[1]{\textit{Mathematics Subject Classification $2010$:} #1}
\begin{document}
\begin{abstract}
We classify completely prime primitive ideals whose associated varieties are the closure of the minimal nilpotent orbit of $\mathfrak{g}=\mathfrak{sl}(n,\C)$, and classify irreducible 
$(\mathfrak{g},\mathfrak{k})$-modules 
which have those ideals as annihilators. 
Moreover, we irreducibly decompose them as $\mathfrak{k}$-modules.\\
\MSC{22E46.}\\
\keyword{Minimal representation; Completely prime primitive ideal; Simple Lie group; Type A.}
\end{abstract}
\maketitle
\section{Introduction}\label{sec:intro}
Let $G$ be a simple real Lie group and $\mathfrak{g}_0:=\Lie(G)$. 
Fix a maximally compact subalgebra $\mathfrak{k}_0$ of $\mathfrak{g}_0$. 
Assume that the complexification $\mathfrak{g}$ of $\mathfrak{g}_0$ is simple. 
Joseph proved that if $\mathfrak{g}$ is not of type A, 
then the universal enveloping algebra $U(\mathfrak{g})$ has a unique completely prime ideal $J_0$ whose associated variety, which is a subset of $\g^{\vee}:=\Hom_{\C}(\g,\C)$, is the closure of the minimal nilpotent coadjoint orbit $\mathcal{O}^{\min}$ \cite{Jos76}, \cite[Theorem 3.1]{GS04}. 
Minimal representations of $G$ are defined as admissible irreducible representations where the annihilators of the associated $(\mathfrak{g},\mathfrak{k})$-modules are equal to the Joseph ideal $J_0$. 

Minimal representations are classified 
and are known to be infinitesimally equivalent to unitary representations \cite[Corollary 5.1]{Tam19}. 
In the Kirillov-Kostant orbit philosophy, 
they are considered to be attached to $G$-orbits in $\mathcal{O}^{\min}\cap\{X\in\g^{\vee}\mid X(\g_0)\subset\R\}$, and are considered to be a part of building blocks of the unitary dual of $G$. 

Moreover, minimal representations are of interest in physics. 
For example, the oscillator representation of the metaplectic group (whose irreducible components are minimal) has a realization as the bound states of the quantum harmonic oscillator, 
and the minimal representation of the indefinite orthogonal group $\rO(p,2) (p\ge 6)$ has a realization as some solution space of the wave equation in the Minkowski space (see \cite[Theorem 1.4]{KO03iii} for example). 

On the other hand, 
there exist representations called ``minimal'' even when $\mathfrak{g}$ is simple of type $A$, that is, when the definition of the Joseph ideal (hence the one of minimality) is not given. 
Here the vague term \lq\lq minimal'' means that there is an interest in physics as in the previous examples, the $\mathfrak{k}$-types are as simple as possible (called pencil, see Corollary~\ref{cor:ht} \eqref{ind:pencil} for the definition), 
or there is a relation with the minimal nilpotent orbit. 
Such representations include 
the ladder representation of $\rO(2,4)$ (which is locally isomorphic to $\SU(2,2)$) 
expressing the bound states of the Hydrogen atom \cite[Remark 3.6.2 (3)]{KO03i} and 
the irreducible unitary representations of the double cover $\widetilde{\SL}(3,\R)$ of $\SL(3,\R)$ given by Torasso (see \cite[Theoreme V\hspace{-.1em}I\hspace{-.1em}I.1]{Tor83}, \cite{RS82}).

The aim of this paper is to define minimality in terms of annihilators of $(\mathfrak{g},\mathfrak{k})$-modules so that the above representations are minimal, and to classify minimal representations for connected simple real Lie groups of type $A_{n-1}$ $(n\ge 2)$. 
Hence this study can be regarded as a small step of classifying irreducible $(\mathfrak{g},\mathfrak{k})$-modules associated with a fixed completely prime primitive ideal. 

Our first claim is that the two-sided ideals in $U(\mathfrak{g})$ 
whose associated graded ideals coincide with the ideal defined by $\mathcal{O}^{\min}$ are parametrized by the complex numbers $\C$ (see Theorem~\ref{thm:Ja}). 
If we consider a simple Lie algebra not of type $A$, that condition characterizes the Joseph ideal \cite[Theorem 3.1]{GS04}.

Let us write $J_a$ for the ideal corresponding to $a\in\C$ (see Definition~\ref{df:Ja} for the precise definition). 
We say that an irreducible $(\mathfrak{g},\mathfrak{k})$-module is \emph{$a$-minimal} if its annihilator equals $J_a$, and is \emph{minimal} if it is $a$-minimal for some $a\in\C$ (see Definition~\ref{df:minimal}). 

The associated variety gives us a necessary condition for the existence of minimal $(\mathfrak{g},\mathfrak{k})$-modules. 
If a minimal $(\mathfrak{g},\mathfrak{k})$-module exists, then $\mathcal{O}^{\min}\cap\{X\in\g^{\vee}\mid X(\g_0)\subset\R\}$ is not empty. Hence it suffices to consider the real forms $\mathfrak{su}(p,q)$ $(p, q>0)$ and $\mathfrak{sl}(n,\R)$ by \cite[Proposition 4.1]{Oku15}. 

The main theorems in this paper are Theorems~\ref{thm:supq} and~\ref{thm:slnR}, which classify $a$-minimal $(\mathfrak{g},\mathfrak{k})$-modules for any $a\in\C$ and $\mathfrak{g}_0=\mathfrak{su}(p,q)$ $(p, q>0)$, $\mathfrak{sl}(n,\R)$. 
The number of the isomorphism classes of $a$-minimal $(\mathfrak{g},\mathfrak{k})$-modules is given in Table~\ref{table:number}. 
\begin{table}[th]
\caption{The number of the isomorphism classes of $a$-minimal $(\mathfrak{g},\mathfrak{k})$-modules}
\label{table:number}
\centering
\begin{tabular}{ccc}
\toprule
	$\mathfrak{g}_0$&$a$&number\\\noalign{\smallskip}\hline\hline
	$\mathfrak{su}(p,1)(p\ge 2)$&$\C$&$2$\\\hline
	$\mathfrak{su}(1,q)(q\ge 2)$&$\C$&$2$\\\hline
	$\mathfrak{su}(p,q)(p, q\ge 2)$&$(p+q)/2+\Z$&$2$\\
	&$\C\setminus((p+q)/2+\Z)$&$0$\\\hline
	$\mathfrak{sl}(n,\R)(n\ge 4)$&$\C$&$2$\\\hline
	$\mathfrak{sl}(3,\R)$&$\Z$&$3$\\
	&$\C\setminus\Z$&$2$\\
\bottomrule
\end{tabular}
\end{table}
We remark that many of minimal $(\mathfrak{g},\mathfrak{k})$-modules do not admit nondegenerate invariant Hermitian forms and are not unitarizable, which does not occur when $\mathfrak{g}_0$ is not of type $A$. 
For the classification for unitarizable minimal $(\mathfrak{g},\mathfrak{k})$-modules, see Remarks~\ref{rem:supq} \eqref{ind:supq unitarizable} and~\ref{rem:slnR} \eqref{ind:slnR unitarizable}. 

For $\mathfrak{su}(1,q)$ $(q>0)$ and $\mathfrak{sl}(n,\R)$, 
there exist $a$-minimal $(\mathfrak{g},\mathfrak{k})$-modules for any $a\in\C$. 
They are related with the induction from maximal ($\theta$-stable or real) parabolic subalgebras of $\mathfrak{g}$ corresponding to the partition $(1,n-1)$. 
From the viewpoint of the orbit method, the situation matches to the fact that 
the nonzero coadjoint orbits of minimal dimension 
consist of $\mathcal{O}^{\min}$ and semisimple orbits through nonzero characters of the parabolic subalgebras. 

Our construction of a minimal $(\mathfrak{g},\mathfrak{k})$-module for $\mathfrak{g}_0=\mathfrak{su}(p,q)$ $(p,q>0)$ and the one for $\mathfrak{sl}(n,\R)$ are different. 
We realize it as the irreducible quotient of a Verma module for $\mathfrak{su}(p,q)$, while we construct it as the infinite-dimensional irreducible subquotient of the kernel of an intertwining differential operator between parabolically induced modules for $\mathfrak{g}_0=\mathfrak{sl}(n,\R)$. 
The latter construction for Torasso's representation ($n=3$) was given in \cite[Theorem 1.6]{KO19}, and the one for genuine $a$-minimal representations of the universal cover of $\mathrm{SL}(3,\R)$ was recently given in \cite[Theorem 1.7 (5)]{KO21}. 
We give a general framework for the construction in Section~\ref{subsec:cov diff}. 

The proof of exhaustion uses the facts that highest weights of $\mathfrak{k}$-types of a minimal $(\mathfrak{g},\mathfrak{k})$-module lie in some specified lattice, 
and that $a$-minimal $(\mathfrak{g},\mathfrak{k})$-modules are isomorphic to each other if they have a common $\mathfrak{k}$-type (see Proposition~\ref{prop:properties}). 
The argument is the same as the classification when $\mathfrak{g}_0$ is not of type $A$ \cite{Tam19}. 
We also need to show the injectivity of some intertwining differential operator 
for the nonexistence of minimal $(\mathfrak{sl}(3,\C),\mathfrak{so}(3,\R))$-modules with some $\mathfrak{k}$-types (see Proposition~\ref{prop:kernel sl3R}). 

{\it Notation}: We set $\N:=\{0,1,2,\ldots\}$. 
Lie groups will be denoted by Latin capital letters, their Lie algebras by corresponding lower case German letters with subscript zero. We omit the subscript zero to denote their complexifications. 
Given a simple Lie algebra $\g$, a Cartan subalgebra $\mathfrak{h}$, and a finite-dimensional $\g$-module $V$, 
we write $V_{\mu}$ for the weight space of $V$ of weight $\mu$ with respect to $\mathfrak{h}$.

\section{Primitive ideals and irreducible highest weight modules}\label{sec:primitive ideals}
Let $n\ge 2$ and $\mathfrak{g}=\mathfrak{sl}(n,\C)$.
In this section, we classify two-sided ideals $J$ in the enveloping algebra $U(\mathfrak{g})$ whose associated graded ideals $\gr J$ are the ideal $\mathcal{I}(\mathcal{O}^{\min})$ defined by the minimal nilpotent coadjoint orbit $\mathcal{O}^{\min}$ of $\mathfrak{g}^{\vee}$. 
Moreover, we obtain the classification of irreducible highest weight modules whose annihilators are equal to one of these ideals. 

For $A\in\mathfrak{g}$ and $1\le i,j\le n$, we write $A_{i,j}$ for the $(i,j)$-th component of the matrix $A$. 
We write $\mathfrak{h}$ for the Cartan subalgebra of $\mathfrak{g}$ consisting of diagonal matrices. 
Let $e_i$ $(1\le i\le n)$ be an element of the dual $\mathfrak{h}^{\vee}$ defined by $e_i(H)=H_{i,i}$ for $H\in\mathfrak{h}$. 
Then the set of roots for $(\mathfrak{g},\mathfrak{h})$ is written as $\Sigma=\Set{e_i-e_j|1\le i,j\le n}$. 
We fix a set of positive roots $\Sigma^+=\Set{e_i-e_j|i<j}$. 
For an element $\lambda$ in $\mathfrak{h}^{\vee}$, we define $\lambda_i\in \C$ $(1\le i\le n)$ by $\lambda=\sum_i\lambda_ie_i$ and $\sum_i\lambda_i=0$.  Let us write $(\lambda_1,\ldots,\lambda_n)$ for $\lambda$.

We first describe the irreducible decomposition of the second symmetric power $S^2(\mathfrak{g})$. 
Let $I_n$ be the unit $n$-by-$n$ matrix, $E_{i,j}$ the matrix whose $(i,j)$-th component is one and the others are zero. 
Set 
\[T_{i,j}:=E_{i,j}-\frac{\delta_{i,j}}{n}I_n.\]
Here $\delta_{i,j}$ denotes the Kronecker delta. 
We remark that $A=\sum_{1\le i,j\le n}A_{i,j}T_{i,j}$ holds for every $A\in\g$. 
We define 
a nondegenerate invariant bilinear form $B$ on $\mathfrak{g}$ by 
\begin{align}\label{eq:form}
B(X,Y):=\Tr(XY)\quad (X,Y\in\mathfrak{g}).
\end{align}
Put
\begin{align}\label{eq:Casimir}
\Omega:=\sum_{1\le i,j\le n}T_{i,j}T_{j,i}\in S^2(\g).
\end{align}

\begin{prop}\label{prop:decomp of S2g}
\begin{enumerate}
\item\label{eq:S2g}
As a $\mathfrak{g}$-module, the second symmetric power $S^2(\mathfrak{g})$ irreducibly decomposes to 
\begin{align}\label{eq:S2g2}
\begin{cases}
F(2e_1-2e_n)\oplus F(e_1+e_2-e_{n-1}-e_n)\oplus F(e_1-e_n)\oplus F(0) & \text{ if $n\ge 4$,}\\
F(2e_1-2e_n)\oplus F(e_1-e_n)\oplus F(0) & \text{ if $n=3$,}\\
F(2e_1-2e_n)\oplus F(0) & \text{ if $n=2$.}
\end{cases}
\end{align}
Here $F(\lambda)$ denotes the irreducible $\mathfrak{g}$-submodule of $S^2(\mathfrak{g})$ with highest weight $\lambda$ and we have $F(0)=\C\Omega$. 
\item\label{eq:F(11-1-1)}
For $n\ge 4$, the $\mathfrak{g}$-submodule $F(e_1+e_2-e_{n-1}-e_n)\oplus F(e_1-e_n)\oplus F(0)$ is equal to 
\begin{align}\label{eq:F(11-1-1)2}
\Span\Set{T_{i,j}T_{k,l}-T_{i,l}T_{k,j}|1\le i,j,k,l\le n}.
\end{align}
\item\label{eq:F(1-1)}
For $n\ge 3$, the linear map 
\[\mathfrak{g}\to F(e_1-e_n); A\mapsto \sum_{i,j,k}A_{i,j}T_{i,k}T_{k,j}\]
is a $\mathfrak{g}$-isomorphism. 
\end{enumerate}
\end{prop}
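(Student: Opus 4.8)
The plan is to prove the three assertions in the order (1), (3), (2): assertion~(3) uses the multiplicity-one statement for $F(e_1-e_n)$ coming from~(1), and~(2) uses both~(1) (to identify the orthogonal complement of the Cartan component) and~(3) (to produce a highest weight vector of type $F(e_1-e_n)$ inside the span in~\eqref{eq:F(11-1-1)2}). For~(1) I would decompose $S^2(\mathfrak{gl}(n,\C))$ and then descend to $\mathfrak{sl}(n,\C)$: writing $V=\C^n$ and $\mathfrak{gl}(n,\C)=V\otimes V^{\vee}$, the Cauchy-type identity gives $S^2(V\otimes V^{\vee})\cong(S^2V\otimes S^2V^{\vee})\oplus(\Lambda^2V\otimes\Lambda^2V^{\vee})$ as $\GL(n,\C)$-modules, and the Littlewood--Richardson rule decomposes each factor into at most three irreducibles. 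Restricting highest weights to $\mathfrak{sl}(n,\C)$ these become $F(2e_1-2e_n)\oplus F(e_1-e_n)\oplus F(0)$ for all $n\ge2$, and, for $n\ge4$, $F(e_1+e_2-e_{n-1}-e_n)\oplus F(e_1-e_n)\oplus F(0)$, with the evident degenerations when $n\le3$ ($\Lambda^2V\otimes\Lambda^2V^{\vee}\cong\mathfrak{gl}(3,\C)$ for $n=3$, and $\Lambda^2V$ is one-dimensional for $n=2$). Since $\mathfrak{gl}(n,\C)=\mathfrak{sl}(n,\C)\oplus\C I_n$ yields $S^2(\mathfrak{gl}(n,\C))\cong S^2(\mathfrak{sl}(n,\C))\oplus F(e_1-e_n)\oplus F(0)$ as $\mathfrak{sl}(n,\C)$-modules, cancelling one copy each of $F(e_1-e_n)$ and $F(0)$ gives~\eqref{eq:S2g2}; and $\Omega$ being a nonzero $\mathfrak{g}$-invariant identifies $F(0)=\C\Omega$. (Alternatively \eqref{eq:S2g2} can be quoted from plethysm tables or obtained from a character computation.)

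For~(3), I would check that $\phi\colon A\mapsto\sum_{i,j,k}A_{i,j}T_{i,k}T_{k,j}$ is $\GL(n,\C)$-equivariant by a direct computation: using $[E_{p,q},T_{i,k}]=\delta_{q,i}T_{p,k}-\delta_{k,p}T_{i,q}$ and the derivation property of $[E_{p,q},\cdot]$ on $S^2(\mathfrak{g})$, one expands $[E_{p,q},\phi(A)]$ into four sums, of which two cancel (they are the same symmetric product with opposite sign, by commutativity of $S^2(\mathfrak{g})$) while the other two reassemble into $\phi([E_{p,q},A])$ after substituting $[E_{p,q},A]_{i,j}=\delta_{i,p}A_{q,j}-A_{i,p}\delta_{q,j}$. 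Thus $\phi$ is a $\mathfrak{g}$-homomorphism and it is nonzero (e.g. $\phi(E_{1,n})=\sum_kT_{1,k}T_{k,n}\ne0$); since $\mathfrak{g}$ is irreducible $\phi$ is injective, so its image is an irreducible summand of $S^2(\mathfrak{g})$ isomorphic to $F(e_1-e_n)$, which has multiplicity one by~(1), whence $\Image\phi=F(e_1-e_n)$.

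For~(2), equip $S^2(\mathfrak{g})$ with the $\mathfrak{g}$-invariant symmetric form induced by $B$, namely $\langle X_1X_2,Y_1Y_2\rangle=B(X_1,Y_1)B(X_2,Y_2)+B(X_1,Y_2)B(X_2,Y_1)$. This form is nondegenerate, and since the four highest weights in~\eqref{eq:S2g2} are pairwise distinct and self-dual, the summands are mutually orthogonal and each nondegenerate, so $F(2e_1-2e_n)^{\perp}=F(e_1+e_2-e_{n-1}-e_n)\oplus F(e_1-e_n)\oplus F(0)$. The Cartan component $F(2e_1-2e_n)$ is the linear span of $\{X\odot X:X\in\mathcal{O}^{\min}\}$, i.e.\ of the $(vw^{\top})^{\odot2}$ with $w^{\top}v=0$; using $B(T_{a,b},vw^{\top})=v_bw_a$ one gets $\langle T_{i,j}T_{k,l}-T_{i,l}T_{k,j},(vw^{\top})^{\odot2}\rangle=2v_jv_lw_iw_k-2v_lv_jw_iw_k=0$, so the span in~\eqref{eq:F(11-1-1)2} is contained in $F(2e_1-2e_n)^{\perp}$. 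For the reverse inclusion I would exhibit, inside that span, an element generating each of the three remaining summands: $\sum_{i,j}(T_{i,j}T_{j,i}-T_{i,i}T_{j,j})=\Omega$ (using $\sum_iT_{i,i}=0$) for $F(0)$; $\sum_k(T_{1,k}T_{k,n}-T_{1,n}T_{k,k})=\phi(E_{1,n})$, a highest weight vector of $F(e_1-e_n)$ by~(3); and $T_{1,n-1}T_{2,n}-T_{1,n}T_{2,n-1}$, which has weight $e_1+e_2-e_{n-1}-e_n$ and is killed by every simple raising operator $E_{p,p+1}$ (a short case check in which only $p=1$ and $p=n-1$ contribute, and there the two terms cancel), hence generates $F(e_1+e_2-e_{n-1}-e_n)$. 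The two inclusions give~\eqref{eq:F(11-1-1)2}.

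I expect the difficulty to be organizational rather than conceptual: keeping the $\mathfrak{gl}(n,\C)$ and $\mathfrak{sl}(n,\C)$ decompositions straight in~(1), and in~(2) verifying that $\langle\cdot,\cdot\rangle$ genuinely singles out $F(2e_1-2e_n)^{\perp}$ and that $T_{1,n-1}T_{2,n}-T_{1,n}T_{2,n-1}$ is nonzero and annihilated by all the $E_{p,p+1}$. Everything reduces to the bracket relation $[E_{p,q},T_{i,j}]=\delta_{q,i}T_{p,j}-\delta_{j,p}T_{i,q}$, Schur's lemma, and part~(1).
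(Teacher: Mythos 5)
Your route through parts (1) and (3) is sound, and (1) is a genuinely different argument from the paper's: you decompose $S^2(V\otimes V^{\vee})\cong(S^2V\otimes S^2V^{\vee})\oplus(\Lambda^2V\otimes\Lambda^2V^{\vee})$ and strip off the $\C I_n$ contributions, whereas the paper exhibits the span \eqref{eq:F(11-1-1)2} directly as a submodule, locates highest weight vectors inside it, and closes with a Weyl dimension count; your (3) (equivariance of $\phi$ by the bracket relation, then multiplicity one from (1)) is essentially the paper's computation.

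The genuine gap is in the reverse inclusion of (2). You conclude that $F(0)$, $F(e_1-e_n)$ and $F(e_1+e_2-e_{n-1}-e_n)$ are contained in the span \eqref{eq:F(11-1-1)2} because the span contains $\Omega$, $\phi(E_{1,n})$ and $T_{1,n-1}T_{2,n}-T_{1,n}T_{2,n-1}$. But containing a highest weight vector $v$ of $F(\mu)$ only yields $F(\mu)=U(\mathfrak{g})v\subseteq\eqref{eq:F(11-1-1)2}$ if that span is stable under the adjoint action of $\mathfrak{g}$, and nowhere in your outline is this invariance established; for a general subspace the implication fails (the line $\C\,\phi(E_{1,n})$ contains a highest weight vector of $F(e_1-e_n)$ without containing $F(e_1-e_n)$). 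This is precisely the first step of the paper's proof: the computation of $[T_{r,s},\,T_{i,j}T_{k,l}-T_{i,l}T_{k,j}]$ shows each such bracket is again a linear combination of elements of the same form, so \eqref{eq:F(11-1-1)2} is a $\mathfrak{g}$-submodule. With that one computation added (or by realizing the span as the image of the $\mathfrak{g}$-equivariant map $\Lambda^2V\otimes\Lambda^2V^{\vee}\to S^2(\mathfrak{g})$ already implicit in your part (1)), your argument for (2) closes. A smaller point: your forward inclusion quotes that $F(2e_1-2e_n)$ is spanned by $\set{X\odot X\mid X\in\mathcal{O}^{\min}}$; this is true but needs the one-line justification that every $X\in\mathcal{O}^{\min}$ is $G$-conjugate to a multiple of $T_{1,n}$, so each square lies in the ($G$-stable, irreducible) Cartan component — the paper sidesteps this by simply noting that \eqref{eq:F(11-1-1)2} contains no nonzero multiple of $T_{1,n}T_{1,n}$ and comparing dimensions.
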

\begin{proof}
By $[T_{r,s},T_{i,j}]=\delta_{i,s}T_{r,j}-\delta_{r,j}T_{i,s}$, we have 
\begin{align}
[T_{r,s},T_{i,j}T_{k,l}-T_{i,l}T_{k,j}]
&=(\delta_{i,s}T_{r,j}-\delta_{r,j}T_{i,s})T_{k,l}
+T_{i,j}(\delta_{k,s}T_{r,l}-\delta_{r,l}T_{k,s})\\
&\quad -(\delta_{i,s}T_{r,l}-\delta_{r,l}T_{i,s})T_{k,j}
-T_{i,l}(\delta_{k,s}T_{r,j}-\delta_{r,j}T_{k,s})\\
&=\delta_{i,s}(T_{r,j}T_{k,l}-T_{r,l}T_{k,j})
-\delta_{r,j}(T_{i,s}T_{k,l}-T_{i,l}T_{k,s})\\
&\quad+\delta_{k,s}(T_{i,j}T_{r,l}-T_{i,l}T_{r,j})
-\delta_{r,l}(T_{i,j}T_{k,s}-T_{i,s}T_{k,j}). 
\end{align}
Therefore the subspace \eqref{eq:F(11-1-1)2} is invariant under the action of $\mathfrak{g}$ and has a highest weight vector $T_{1,n-1}T_{2,n}-T_{1,n}T_{2,n-1}$ of weight $e_1+e_2-e_{n-1}-e_n$ for $n\ge 2$. 

Let us consider a vector $\sum_{k}T_{i,k}T_{k,j}=\sum_k(T_{i,k}T_{k,j}-T_{i,j}T_{k,k})$, which belongs to \eqref{eq:F(11-1-1)2}. 
We see
\begin{align}
[T_{r,s},\sum_kT_{i,k}T_{k,j}]
&=\sum_k\{(\delta_{i,s}T_{r,k}-\delta_{r,k}T_{i,s})T_{k,j}
+T_{i,k}(\delta_{k,s}T_{r,j}-\delta_{r,j}T_{k,s})\}\\
&=\delta_{i,s}\sum_kT_{r,k}T_{k,j}-\delta_{r,j}\sum_kT_{i,k}T_{k,s}.\label{eq:F(1-1)gmod}
\end{align}
Therefore $[\g,\Omega]=0$ and the map in \eqref{eq:F(1-1)} is a $\mathfrak{g}$-homomorphism into $S^2(\mathfrak{g})$. 
For $n\ge 3$, the submodule \eqref{eq:F(11-1-1)2} has highest weight $e_1-e_n$ since $\sum_k T_{1,k}T_{k,n}$ is nonzero. 

Furthermore, \eqref{eq:F(11-1-1)2} has highest weight $0$ as it contains $\Omega$, and it does not contain 
a nonzero multiple of $T_{1,n}T_{1,n}$, which is a highest weight vector of weight $2e_1-2e_n$ in $S^2(\mathfrak{g})$. 

By the Weyl dimension formula, we see the dimension of \eqref{eq:S2g2} is equal to the square of $n(n-1)/2$, 
which is the dimension of $S^2(\mathfrak{g})$. 
Hence \eqref{eq:S2g2} equals the direct sum of $F(2e_1-2e_n)$ and \eqref{eq:F(11-1-1)2}, and our claim follows. 
\end{proof}

For $1\le i<j<k<l\le n$, we put 
\[\mathcal{M}_{i,j,k,l}:=\Set{a_1(T_{i,j}+T_{k,l})+a_2(T_{i,k}+T_{j,l})+a_3(T_{i,l}+T_{j,k})\in\mathfrak{g}|\sum_{r=1}^3a_r=0}.\]
Define a injective linear map
\[T\colon \Set{A\in\mathfrak{gl}(n,\C)|A_{i,j}=0\text{ for $i\ge j$}}\to S^2(\mathfrak{g})_0;A\mapsto\sum_{i,j}A_{i,j}(T_{i,i}T_{j,j}-T_{i,j}T_{j,i}).\]

\begin{prop}\label{prop:F(11-1-1)0}
Assume $n\ge 4$. Then the zero weight space $F(e_1+e_2-e_{n-1}-e_n)_0$ is equal to 
\begin{align}\label{eq:F(11-1-1)0}
\Span\Set{T(\mathcal{M}_{i,j,k,l})|1\le i<j<k<l\le n}.
\end{align}
\end{prop}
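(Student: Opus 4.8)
The plan is to establish the inclusion $\Span\{T(\mathcal{M}_{i,j,k,l})\mid 1\le i<j<k<l\le n\}\subseteq F(e_1+e_2-e_{n-1}-e_n)_0$ and then to show that both sides have the same dimension. First I would set up the ambient spaces. Putting $v_{i,j}:=T_{i,i}T_{j,j}-T_{i,j}T_{j,i}$, one has $T(E_{i,j})=v_{i,j}$ for $i<j$, so injectivity of $T$ shows that the $v_{i,j}$ $(1\le i<j\le n)$ are linearly independent. Each $v_{i,j}$ has the shape $T_{a,b}T_{c,d}-T_{a,d}T_{c,b}$ (take $a=b=i$, $c=d=j$), hence lies in the subspace \eqref{eq:F(11-1-1)2}, and examining weights shows conversely that the zero weight space of \eqref{eq:F(11-1-1)2} is exactly $\Span\{v_{i,j}\mid 1\le i<j\le n\}$. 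Therefore that zero weight space has dimension $\binom{n}{2}$, and since as a weight space it equals $F(e_1+e_2-e_{n-1}-e_n)_0\oplus F(e_1-e_n)_0\oplus\C\Omega$ with $\dim F(e_1-e_n)_0=n-1$ by Proposition~\ref{prop:decomp of S2g}~\eqref{eq:F(1-1)}, we get $\dim F(e_1+e_2-e_{n-1}-e_n)_0=\binom{n}{2}-n$. Finally, each $\mathcal{M}_{i,j,k,l}$ consists of strictly upper triangular matrices since $i<j<k<l$, so $T$ is injective on it.

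For the inclusion I would use two $\mathfrak{g}$-equivariant maps: $\pi\colon S^2(\mathfrak{g})\to\mathfrak{g}$, sending a symmetric product $XY$ to the traceless matrix $XY+YX-\tfrac{2}{n}\Tr(XY)I_n$, and $\tau\colon S^2(\mathfrak{g})\to\C$, $XY\mapsto B(X,Y)$. Both are nonzero (e.g. $\pi(T_{1,2}T_{2,3})=T_{1,3}$ and $\tau(\Omega)=n^2-1$), so by Schur's lemma $\pi$ kills every irreducible constituent of $S^2(\mathfrak{g})$ except $F(e_1-e_n)$ and is injective on it, while $\tau$ kills every constituent except $\C\Omega$ and is injective on it. A short matrix computation gives, for $i\ne j$,
\[
\pi(v_{i,j})=-\tfrac{n+2}{n}(T_{i,i}+T_{j,j}),\qquad \tau(v_{i,j})=-\tfrac{n+1}{n}.
\]
Hence for $A=a_1(T_{i,j}+T_{k,l})+a_2(T_{i,k}+T_{j,l})+a_3(T_{i,l}+T_{j,k})\in\mathcal{M}_{i,j,k,l}$, the relation $a_1+a_2+a_3=0$ gives $\pi(T(A))=-\tfrac{n+2}{n}(a_1+a_2+a_3)(T_{i,i}+T_{j,j}+T_{k,k}+T_{l,l})=0$ and $\tau(T(A))=0$. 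Since $T(A)$ lies in the zero weight space of \eqref{eq:F(11-1-1)2}, I may write $T(A)=u+w+c\,\Omega$ with $u\in F(e_1+e_2-e_{n-1}-e_n)_0$, $w\in F(e_1-e_n)_0$, $c\in\C$; then $\pi(T(A))=\pi(w)=0$ forces $w=0$ and $\tau(T(A))=c\,\tau(\Omega)=0$ forces $c=0$, so $T(A)\in F(e_1+e_2-e_{n-1}-e_n)_0$.

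It remains to match dimensions. As $T$ is injective on strictly upper triangular matrices, $\dim\Span\{T(\mathcal{M}_{i,j,k,l})\}=\dim\big(\sum_{i<j<k<l}\mathcal{M}_{i,j,k,l}\big)$, a subspace of the $\binom{n}{2}$-dimensional space of strictly upper triangular matrices, and I would compute its dimension from its annihilator: a functional with coordinates $\xi_{p,q}$ $(p<q)$ annihilates every $\mathcal{M}_{i,j,k,l}$ exactly when, for each $i<j<k<l$, the pair-sums $\xi_{i,j}+\xi_{k,l}$, $\xi_{i,k}+\xi_{j,l}$, $\xi_{i,l}+\xi_{j,k}$ coincide. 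Using only the quadruples $\{1,2,p,q\}$ one checks this holds if and only if $\xi_{p,q}=c_p+c_q$ for some $c_1,\dots,c_n\in\C$, an $n$-dimensional family, so $\dim\big(\sum_{i<j<k<l}\mathcal{M}_{i,j,k,l}\big)=\binom{n}{2}-n=\dim F(e_1+e_2-e_{n-1}-e_n)_0$; together with the inclusion above this proves the proposition. I expect the only genuinely non-mechanical point to be this last rigidity statement (equal pair-sums over every $4$-element subset forces $\xi_{p,q}=c_p+c_q$), which is also where the hypothesis $n\ge4$ enters; the use of Schur's lemma to pin down $\ker\pi$ and $\ker\tau$ and the two formulas for $\pi(v_{i,j})$ and $\tau(v_{i,j})$ are routine.
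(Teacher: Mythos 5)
Your proposal is correct, and it follows the paper's overall strategy (prove the inclusion $\Span\{T(\mathcal{M}_{i,j,k,l})\}\subset F(e_1+e_2-e_{n-1}-e_n)_0$, then match dimensions $\binom{n}{2}-n=n(n-3)/2$), but both halves are implemented differently. For the inclusion, the paper introduces the $\mathfrak{su}(n)$-invariant Hermitian inner product $(X,Y)=\Tr(X{}^t\overline{Y})$ on $S^2(\mathfrak{g})$ and checks by direct computation that each $T(\mathcal{M}_{i,j,k,l})$ is orthogonal to $F(e_1-e_n)_0+F(0)=\Span\{\sum_kT_{i,k}T_{k,i}\}$; you instead contract with the two $\mathfrak{g}$-equivariant maps $\pi\colon S^2(\mathfrak{g})\to\mathfrak{g}$ and $\tau\colon S^2(\mathfrak{g})\to\C$ and invoke Schur's lemma together with the multiplicity-freeness of Proposition~\ref{prop:decomp of S2g}; your formulas $\pi(v_{i,j})=-\frac{n+2}{n}(T_{i,i}+T_{j,j})$ and $\tau(v_{i,j})=-\frac{n+1}{n}$ check out, and this route avoids passing to the compact real form (only the constancy of the coefficients matters, not their values). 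For the dimension of $\sum_{i<j<k<l}\mathcal{M}_{i,j,k,l}$, the paper constructs an explicit linear bijection $\Pi$ onto $\bigoplus_{1\le i<j\le n-2}\C\oplus\bigoplus_{1\le i\le n-3}\C$, exhibiting preimages for surjectivity and solving linear relations for injectivity, whereas you dualize and identify the annihilator as $\{\xi_{p,q}=c_p+c_q\}$, an $n$-dimensional family. The one place you should write out in full is the rigidity claim that equal pair-sums force $\xi_{p,q}=c_p+c_q$: it does follow from the quadruples containing $\{1,2\}$, e.g.\ by setting $c_p:=(\xi_{1,p}+\xi_{2,p}-\xi_{1,2})/2$ for $p\ge3$ and $c_1,c_2$ analogously via the triple $\{1,2,3\}$, and then using $\xi_{1,2}+\xi_{p,q}=\xi_{1,p}+\xi_{2,q}=\xi_{1,q}+\xi_{2,p}$ and the quadruple $\{1,2,3,p\}$ to verify all cases; this is exactly where $n\ge4$ enters, as you note. (A small circularity to smooth over: the injectivity of $T$ is equivalent to the linear independence of the $v_{i,j}$, so prove the latter directly from the independence of the monomials $T_{i,j}T_{j,i}$, $i<j$, rather than quoting injectivity.) Net effect: your argument trades the paper's two explicit coordinate computations for representation-theoretic contractions plus a combinatorial annihilator description, at roughly equal length.
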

\begin{proof}
Define an $\mathfrak{su}(n)$-invariant Hermitian inner product $(\cdot,\cdot)$ on $\mathfrak{g}$ by $(X,Y):=B(X,{}^t\overline{Y})=\Tr(X{}^t\overline{Y})$ ($X,Y\in\mathfrak{g}$), where $\overline{Y}$ denotes the complex conjugate of $Y$. 
It induces an $\mathfrak{su}(n)$-invariant Hermitian inner product on $\mathfrak{g}\otimes\mathfrak{g}$ and one on $S^2(\mathfrak{g})$ via the symmetrization, which we write $(\cdot,\cdot)$ by abuse of notation. 
By the assumption $n\ge 4$ and Proposition~\ref{prop:decomp of S2g} and its proof, the zero weight space $F(e_1+e_2-e_{n-1}-e_n)_0$ is the orthogonal complement of $F(e_1-e_n)_0+F(0)=\Span\Set{\sum_kT_{i,k}T_{k,i}|1\le i\le n}$ in the zero weight space $\Span\Set{T_{i,i}T_{j,j}-T_{i,j}T_{j,i}|i<j}$ of \eqref{eq:F(11-1-1)2}. 
In particular, $\dim F(e_1+e_2-e_{n-1}-e_n)_0=n(n-1)/2-n=n(n-3)/2$. 

Let us prove that $T(\mathcal{M}_{i,j,k,l})$ is contained in $F(e_1+e_2-e_{n-1}-e_n)_0$ for any $i<j<k<l$. 
By 
\[(T_{i,j},T_{r,s})
=\Tr\left(\left(E_{i,j}-\frac{\delta_{i,j}}{n}I_n\right)\left(E_{s,r}-\frac{\delta_{r,s}}{n}I_n\right)\right)
=\delta_{i,r}\delta_{j,s}-\frac{\delta_{i,j}\delta_{r,s}}{n},\] 
we have
\begin{align}
&2\Bigl(T_{i,i}T_{j,j}-T_{i,j}T_{j,i}, \sum_sT_{r,s}T_{s,r}\Bigr)\\
&=\sum_{s}\{(T_{i,i},T_{r,s})(T_{j,j},T_{s,r})+(T_{j,j},T_{r,s})(T_{i,i},T_{s,r})\\
&\qquad\quad-(T_{i,j},T_{r,s})(T_{j,i},T_{s,r})-(T_{j,i},T_{r,s})(T_{i,j},T_{s,r})\}\\
&=\sum_s\left\{2\left(\delta_{i,r}\delta_{i,s}-\frac{\delta_{r,s}}{n}\right)\left(\delta_{j,r}\delta_{j,s}-\frac{\delta_{r,s}}{n}\right)-\delta_{i,r}\delta_{j,s}-\delta_{j,r}\delta_{i,s}\right\}\\
&=2\left(\delta_{i,r}-\frac{1}{n}\right)\left(\delta_{j,r}-\frac{1}{n}\right)-\delta_{i,r}-\delta_{j,r}\\
&=\frac{2}{n^2}-\left(\frac{2}{n}+1\right)(\delta_{i,r}+\delta_{j,r})
\end{align}
for any $i<j$ and $1\le r\le n$. 
Since $\sum_{r,s}A_{r,s}=0$ and $\sum_{r}({A}_{r,r_0}+A_{r_0,r})=0$ for any $A\in\mathcal{M}_{i,j,k,l}$ and $1\le r_0\le n$, it follows that $T(\mathcal{M}_{i,j,k,l})$ is orthogonal to $F(e_1-e_n)_0+F(0)$, and is included in $F(e_1+e_2-e_3-e_4)_0$. 

Let us prove that the linear map $\Pi$ from $\sum_{i<j<k<l}\mathcal{M}_{i,j,k,l}$ to $\bigoplus_{1\le i<j\le n-2}\C\oplus \bigoplus_{1\le i\le n-3}\C$ defined by $\Pi(A):=((A_{i,j})_{(i,j)}, (A_{i,n-1})_i)$ is bijective. If it holds, we obtain $\dim \sum_{i<j<k<l}\mathcal{M}_{i,j,k,l}=(n-2)(n-3)/2+n-3=n(n-3)/2=\dim F(e_1+e_2-e_3-e_4)_0$, and the proof is complete. 

For any $(a_{i,j})\in \bigoplus_{1\le i<j\le n-2}\C$ and $(a_{i,n-1})\in\bigoplus_{1\le i\le n-3}\C$, the image of 
\[\sum_{1\le i<j\le n-2}a_{i,j}(T_{i,j}+T_{n-1,n}-T_{i,n}-T_{j,n-1})+\sum_{1\le i\le n-3}a_{i,n-1}(T_{i,n-1}+T_{n-2,n}-T_{i,n}-T_{n-2,n-1})\]
is equal to $((a_{i,j}), (a_{i,n-1}))$. 
Hence $\Pi$ is surjective. 

What is left is to prove the injectivity of $\Pi$. Assume $A\in\sum_{i<j<k<l}\mathcal{M}_{i,j,k,l}$ and $\Pi(A)=0$. 
We see $\sum_{j=1}^{i-1}A_{j,i}+\sum_{j=i+1}^{n}A_{i,j}=0$ for any $1\le i\le n$. 
Hence for any $1\le i\le n-3$, we have $A_{i,n}=-\sum_{j=i+1}^{n-1}A_{i,j}-\sum_{j=1}^{i-1}A_{j,i}=0$. 
Moreover, the sum of any two of $A_{n-2,n-1}, A_{n-2,n}, A_{n-1,n}$ is zero. 
Therefore $A=0$, and $\Pi$ is injective. 
\end{proof}

We next classify irreducible highest weight modules annihilated by some $\g$-submodules of $\bigoplus_{0\le i\le 2}S^i(\g)$. 
For $\lambda\in\mathfrak{h}^{\vee}$, we write $L(\lambda)$ for the irreducible highest weight $\mathfrak{g}$-module with highest weight $\lambda$. 
Let $l_{\lambda}$ be a nonzero highest weight vector in $L(\lambda)$. 
We remark that the infinitesimal character of $L(\lambda)$ is the orbit of the Weyl group through $\lambda+\rho$, where $\rho=\sum_{1\le i\le n}((n+1-2i)/2) e_i$ is half the sum of positive roots for $\Sigma^+$.

For $a\in\C$, let $F^a$ be the $\mathfrak{g}$-submodule of $\mathfrak{g}+S^2(\mathfrak{g})$ defined by 
\begin{align}\label{df:F^a}
F^a:=\begin{cases}
F(e_1+e_2-e_{n-1}-e_n)+F(e_1-e_n)^a&\text{ if $n\ge 4$,}\\
F(e_1-e_n)^a&\text{ if $n=2, 3$,}
\end{cases}
\end{align}
where 
\[F(e_1-e_n)^a:=\Set{\sum_{i,j,k}A_{i,j}T_{i,k}T_{k,j}-\frac{a(n-2)}{n}A|A\in\mathfrak{g}}\subset \mathfrak{g}+S^2(\mathfrak{g}).\]
The normalizing factor $(n-2)/n$ is put in order to simplify our assertions. 
We remark that $F^a=0$ when $n=2$. 

For a positive integer $m$, put $\mathbf{1}_m$ to be the $m$-tuple 
\[\mathbf{1}_m:=(\underbrace{1,\ldots,1}_m).\] 
We regard $\mathbf{1}_0$ as the empty tuple. 
For $1\le i\le n$ and $a\in \C$, let us define an element of $\mathfrak{h}^{\vee}$ by 
\begin{align}\label{eq:lambdaia}
\lambda(i,a)&:=\frac{1}{n}\left(\left(-a-\frac{n}{2}\right)\mathbf{1}_{i-1},(n-1)a-\frac{n(n+1-2i)}{2},\left(-a+\frac{n}{2}\right)\mathbf{1}_{n-i}\right). 
\end{align}

Let $\sym$ be the symmetrization map, that is, the $\mathfrak{g}$-isomorphism defined by 
\[\sym\colon S(\mathfrak{g})\to U(\mathfrak{g});\  A_1A_2\cdots A_m\mapsto \frac{1}{m!}\sum_{\eta\in\mathfrak{S}_m} A_{\eta(1)}A_{\eta(2)}\cdots A_{\eta(m)},\]
where $m\in\N$ and $\mathfrak{S}_m$ denotes the group of permutations of $\{1,2,\ldots,m\}$.

We use the following lemma to compute annihilators of highest weight modules:
\begin{lem}\label{lem:AnnL}
Let $\lambda\in\mathfrak{h}^{\vee}$, and $F$ be a $\mathfrak{g}$-submodule of $U(\mathfrak{g})$ with respect to the adjoint action. 
Then $\sym (F)$ is contained in the annihilator $\Ann L(\lambda)$ if and only if the zero weight space $\sym (F_0)$ annihilates $l_{\lambda}$. 
\end{lem}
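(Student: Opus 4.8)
The plan is to prove the lemma by reducing the containment $\sym(F)\subset\Ann L(\lambda)$ to a statement about a single highest weight vector, exploiting the $\mathfrak{g}$-equivariance of everything in sight. One direction is trivial: if $\sym(F)\subset\Ann L(\lambda)$, then in particular $\sym(F_0)$ kills every vector of $L(\lambda)$, hence kills $l_\lambda$. So the content is the converse.

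For the converse, suppose $\sym(F_0)\cdot l_\lambda=0$. First I would recall that since $\sym$ is a $\mathfrak{g}$-isomorphism intertwining the adjoint actions on $S(\mathfrak{g})$ and $U(\mathfrak{g})$, the image $\sym(F)$ is a $\mathfrak{g}$-submodule of $U(\mathfrak{g})$ under the adjoint action, and $\sym(F_0)=\sym(F)\cap U(\mathfrak{g})_0$ is its zero weight space (with respect to the adjoint $\mathfrak{h}$-action). The key algebraic input is that $\Ann L(\lambda)$ is itself a $\mathfrak{g}$-submodule of $U(\mathfrak{g})$ (a two-sided ideal is in particular $\ad\mathfrak{g}$-stable), so it suffices to show $\sym(F)\subset\Ann L(\lambda)$, and since $\sym(F)$ is a sum of its weight spaces it suffices to show $\sym(F)_\mu\cdot L(\lambda)=0$ for each weight $\mu$ occurring in $F$.

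The mechanism that moves information from weight $\mu$ to weight $0$ is the following: for any $u\in U(\mathfrak{g})$ of $\ad\mathfrak{h}$-weight $\mu$ and any vector $v\in L(\lambda)$, one has $u\cdot v\in L(\lambda)$, and one wants to detect whether it vanishes. Here I would use that $L(\lambda)$ is generated by $l_\lambda$ as a $U(\mathfrak{g})$-module, so $v=w\cdot l_\lambda$ for some $w\in U(\mathfrak{g})$; then $u\cdot v=(uw)\cdot l_\lambda$, and it is enough to know that $(uw)\cdot l_\lambda=0$ for all $w$, i.e. that $uw\in\Ann_{l_\lambda}:=\{x\in U(\mathfrak{g}):x\cdot l_\lambda=0\}$. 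The right annihilator ideal of the vector $l_\lambda$ contains the left ideal generated by $\mathfrak{n}^+$ (raising operators) and by $(h-\lambda(h))$ for $h\in\mathfrak{h}$; quotienting $U(\mathfrak{g})$ by this left ideal gives precisely the Verma module $M(\lambda)$, and $L(\lambda)$ is the quotient of $M(\lambda)$ by its maximal proper submodule. So I would phrase the argument module-theoretically: $\Ann L(\lambda)$ is the largest two-sided ideal contained in the left annihilator of $l_\lambda$, equivalently $\Ann L(\lambda)=\{x\in U(\mathfrak{g}):U(\mathfrak{g})\,x\cdot l_\lambda=0\}$, and because $\Ann L(\lambda)$ is $\ad\mathfrak{g}$-stable, an element $x$ of weight $\mu$ lies in it as soon as the zero-weight "Harish-Chandra" piece of the $\ad\mathfrak{g}$-submodule it generates does — but more directly, since $F$ is $\ad\mathfrak{g}$-stable and $l_\lambda$ is killed by $\mathfrak{n}^+$, acting on $\sym(F_0)\cdot l_\lambda=0$ by $\ad$-lowering operators and using the identity $[\,\text{(lowering op)}\,,\sym(F_0)]\subset\sym(F)$ together with $U(\mathfrak{g})=U(\mathfrak{n}^-)U(\mathfrak{h})U(\mathfrak{n}^+)$ shows every $\sym(F_\mu)$ kills $l_\lambda$ and its $U(\mathfrak{g})$-translates.

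Concretely, the cleanest route: pick $x\in\sym(F)$ of weight $\mu$ and write $\mu=\sum_{\alpha\in\Sigma^+}c_\alpha\alpha$ after conjugating by the Weyl group if needed; by applying a suitable product of $\ad$ of root vectors in $\mathfrak{n}^+$ (which stays in $\sym(F)$ since $F$ is a submodule) one lands in $\sym(F_0)$, and conversely one recovers $x\cdot l_\lambda$ from $\sym(F_0)\cdot(\text{stuff})\cdot l_\lambda$ modulo terms that have already been shown to vanish, because moving a raising operator past $x$ produces a commutator lying in $\sym(F)$ of higher weight plus a term that raises the vector — and raising $l_\lambda$ gives zero. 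Running this as an induction on the height of $\mu$, with the zero-weight hypothesis as the base case, gives $\sym(F)\cdot l_\lambda=0$, hence $U(\mathfrak{g})\sym(F)\cdot l_\lambda=\sym(F)U(\mathfrak{g})\cdot l_\lambda$ — wait, rather: since $\sym(F)\cdot l_\lambda=0$ and $\sym(F)$ is $\ad\mathfrak{g}$-stable, for any $y\in U(\mathfrak{g})$ we get $\sym(F)y\cdot l_\lambda\subset y\,\sym(F)\cdot l_\lambda+[\sym(F),y]\cdot l_\lambda\subset y\cdot 0+\sym(F)\cdot l_\lambda=0$, so $\sym(F)$ annihilates all of $U(\mathfrak{g})l_\lambda=L(\lambda)$, i.e. $\sym(F)\subset\Ann L(\lambda)$.

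I expect the main obstacle to be making the "move raising operators to the right past $x$ and use that they kill $l_\lambda$" step genuinely rigorous: one must ensure the commutators that arise stay inside the $\ad\mathfrak{g}$-stable space $\sym(F)$ (true, because $\sym$ is $\ad$-equivariant and $F$ is $\ad\mathfrak{g}$-stable) and that the induction on weight-height is well-founded and its base case is exactly the hypothesis. The conjugation-by-the-Weyl-group reduction to dominant $\mu$ is a minor technical point one should either spell out or circumvent by working directly with $\mathfrak{n}^+$. Everything else — $\ad$-equivariance of $\sym$, two-sided ideals being $\ad\mathfrak{g}$-stable, $L(\lambda)=U(\mathfrak{g})l_\lambda$, the PBW triangular decomposition — is standard and will be quoted rather than proved.
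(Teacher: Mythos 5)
Your skeleton is workable, and your final step is sound: once one knows $\sym(F)\,l_{\lambda}=0$, the $\ad$-stability of $\sym(F)$ gives $\sym(F)U(\mathfrak{g})\subset U(\mathfrak{g})\sym(F)$, hence $\sym(F)\,U(\mathfrak{g})l_{\lambda}=0$, i.e.\ $\sym(F)\subset\Ann L(\lambda)$ --- this is the same commuting trick the paper uses. The gap is in the middle, i.e.\ in how you pass from the weight-zero hypothesis to $\sym(F)_{\mu}\,l_{\lambda}=0$ for all $\mu$. The mechanisms you describe do not work as stated: applying an $\ad$-lowering operator $f$ to the relation $\sym(F_0)l_{\lambda}=0$ yields $(\ad(f)x_0)l_{\lambda}=-x_0\,f\,l_{\lambda}$, and the right-hand side is not known to vanish (the hypothesis controls $\sym(F_0)$ only on $l_{\lambda}$, not on all of $L(\lambda)$); you cannot in general ``land in $\sym(F_0)$'' by applying $\ad(\mathfrak{n}^{+})$ to a given $x\in\sym(F)_{\mu}$ (for instance $x$ could be a highest weight vector of its constituent with $\mu\neq 0$); and there is no way to ``recover $x\,l_{\lambda}$ from $\sym(F_0)\cdot(\text{stuff})\cdot l_{\lambda}$.'' The step that actually closes a proof along your lines is never stated. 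Namely: if $\mu\not\le 0$ then $x\,l_{\lambda}\in L(\lambda)_{\lambda+\mu}=0$ automatically (this, not only $\mu=0$, is part of the base of the induction); if $\mu\le 0$, induct on the height of $-\mu$ and use $e_{\alpha}\,x\,l_{\lambda}=(\ad(e_{\alpha})x)\,l_{\lambda}$, where the right-hand side vanishes either for weight reasons or by the induction hypothesis, to conclude that $v:=x\,l_{\lambda}$ is annihilated by $\mathfrak{n}^{+}$; then you must invoke the simplicity of $L(\lambda)$: a nonzero $\mathfrak{n}^{+}$-annihilated vector of weight $\lambda+\mu\neq\lambda$ would generate the nonzero proper submodule $U(\mathfrak{n}^{-})v$, a contradiction, so $v=0$. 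Without appealing to irreducibility (equivalently, to the maximal proper submodule of $M(\lambda)$) at exactly this point, the argument does not close. The Weyl-group conjugation remark should simply be dropped: nothing in the setup lets you conjugate $l_{\lambda}$. Your displayed characterization $\Ann L(\lambda)=\{x:U(\mathfrak{g})x\cdot l_{\lambda}=0\}$ is also wrong as written (that set is the left annihilator of $l_{\lambda}$); it should be $\{x: x\,U(\mathfrak{g})l_{\lambda}=0\}$, though your final computation does not actually rely on the faulty version.

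For comparison, the paper avoids the weight-by-weight induction altogether by working in the Verma module: $\ad$-stability gives $\sym(F)M(\lambda)=U(\mathfrak{n}^{-})\sym(F)m_{\lambda}=U(\mathfrak{n}^{-})\proj_2(\sym(F))m_{\lambda}$ with $\proj_2(\sym(F))\subset U(\mathfrak{b}^{-})$, so the $\lambda$-weight space of this submodule is $\sym(F_0)m_{\lambda}$, which vanishes by hypothesis (the top weight space of $M(\lambda)$ being one-dimensional, $\sym(F_0)m_{\lambda}=0$ is equivalent to $\sym(F_0)l_{\lambda}=0$); hence $\sym(F)M(\lambda)$ is proper, sits inside the maximal proper submodule, and therefore $\sym(F)L(\lambda)=0$. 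Either adopt that route or patch your induction with the singular-vector argument above; as it stands the decisive step is missing.
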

\begin{proof}
It suffices to show the \lq\lq if'' part. 
Suppose $\sym(F_0) l_{\lambda}=0$. 
We write $\C_{\lambda}$ for the character of the Borel subalgebra $\mathfrak{b}$ where $\mathfrak{h}$ acts by $\lambda$ and the nilradical acts trivially. 
Then $L(\lambda)$ is isomorphic to the quotient of the Verma module $M(\lambda):=U(\mathfrak{g})\otimes_{U(\mathfrak{b})}\C_{\lambda}$ by the maximal proper submodule. Let $m_{\lambda}\in M(\lambda)$ be a nonzero highest weight vector of weight $\lambda$. 
Let $\proj_2$ be the projection to the second component of the decomposition $U(\mathfrak{g})=U(\mathfrak{g})\mathfrak{n}\oplus U(\mathfrak{b}^-)$. 
Since $F$ is a $\mathfrak{g}$-module, $\sym(F)M(\lambda)$ is a $\mathfrak{g}$-submodule of $M(\lambda)$. 
By the Poincar{\'e}-Birkoff-Witt theorem, we see 
$\sym(F)M(\lambda)
=\sym(F)U(\mathfrak{n}^-)m_{\lambda}
=U(\mathfrak{n}^-)\sym(F)m_{\lambda}
=U(\mathfrak{n}^-)\proj_2(\sym(F))m_{\lambda}$. 
Hence the $\lambda$ weight space of $\sym(F)M_{\lambda}$ is $\sym(F_0)m_{\lambda}$, which is zero by our assumption. 
Therefore the submodule $\sym(F)M(\lambda)$ is proper, and $\sym(F)L(\lambda)$ is zero. 
\end{proof}

\begin{prop}\label{prop:Fa}
Let $a\in\C,\lambda\in\mathfrak{h}^{\vee}$. 
Then $\sym(F^a)$ annihilates the highest weight module $L(\lambda)$ if and only if $\lambda=\lambda(i,a)$ for some $1\le i\le n$.
\end{prop}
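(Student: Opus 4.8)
The plan is to reduce, via Lemma~\ref{lem:AnnL}, to an explicit weight-zero computation on the highest weight vector $l_{\lambda}$, and then to recognize the resulting polynomial identities in $\lambda_1,\dots,\lambda_n$ (and $a$) as exactly the ones that cut out $\lambda(i,a)$. Concretely, Lemma~\ref{lem:AnnL} says that $\sym(F^a)\subset\Ann L(\lambda)$ holds if and only if $\sym$ of the zero weight space of $F^a$ kills $l_{\lambda}$. By Proposition~\ref{prop:decomp of S2g} the weight-zero vectors of $F(e_1-e_n)^a$ are the images of the diagonal matrices, i.e.\ the elements $\Theta(a_{\bullet}):=\sum_{i,k}a_iT_{i,k}T_{k,i}-\tfrac{a(n-2)}{n}\sum_i a_iT_{i,i}$ with $\sum_i a_i=0$; and for $n\ge4$ Proposition~\ref{prop:F(11-1-1)0} says that those of $F(e_1+e_2-e_{n-1}-e_n)$ are spanned by the $T(A)$ with $A\in\mathcal{M}_{p,q,r,s}$, $1\le p<q<r<s\le n$. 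So everything comes down to evaluating $\sym(\Theta(a_{\bullet}))l_{\lambda}$ and $\sym(T(A))l_{\lambda}$.

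These are short computations once one pushes every raising operator to the right of $l_{\lambda}$, using $T_{i,j}l_{\lambda}=0$ for $i<j$, $T_{i,i}l_{\lambda}=\lambda_il_{\lambda}$, $[T_{i,j},T_{j,i}]=T_{i,i}-T_{j,j}$, and $[T_{i,i},T_{j,j}]=0$. One gets
\[
\sym(\Theta(a_{\bullet}))\,l_{\lambda}=\Bigl(\sum_m f_m a_m\Bigr)l_{\lambda},\qquad
f_m=\lambda_m^2+\tfrac{n+2-2m}{2}\lambda_m+\sum_{i<m}\lambda_i-\tfrac{a(n-2)}{n}\lambda_m ;
\]
since $\sum_m a_m=0$ is the only constraint, this vanishes for all admissible $a_{\bullet}$ exactly when $f_m$ is independent of $m$, and a one-line telescoping gives
\[
f_{m+1}-f_m=(\lambda_{m+1}-\lambda_m)\Bigl(\lambda_m+\lambda_{m+1}+\tfrac{n-2m}{2}-\tfrac{a(n-2)}{n}\Bigr).
\]
Similarly, writing $g_{i,j}:=\lambda_i\lambda_j-\tfrac12(\lambda_i-\lambda_j)$ and $A=a_1(T_{p,q}+T_{r,s})+a_2(T_{p,r}+T_{q,s})+a_3(T_{p,s}+T_{q,r})$, one finds $\sym(T(A))l_{\lambda}=\bigl(a_1(g_{p,q}+g_{r,s})+a_2(g_{p,r}+g_{q,s})+a_3(g_{p,s}+g_{q,r})\bigr)l_{\lambda}$, which vanishes for all $a_1+a_2+a_3=0$ precisely when $(\lambda_p-\lambda_q)(\lambda_r-\lambda_s)=0$ and $(\lambda_q-\lambda_r)(\lambda_p-\lambda_s+1)=0$. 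Hence $\sym(F^a)$ kills $L(\lambda)$ if and only if \textbf{(I)} for every $m$ one has $\lambda_m=\lambda_{m+1}$ or $\lambda_m+\lambda_{m+1}=\tfrac{a(n-2)}{n}-\tfrac{n-2m}{2}$, and, when $n\ge4$, \textbf{(II)} for every $p<q<r<s$ one has $(\lambda_p-\lambda_q)(\lambda_r-\lambda_s)=0$ and $(\lambda_q-\lambda_r)(\lambda_p-\lambda_s+1)=0$.

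For the ``if'' part I would substitute $\lambda=\lambda(i,a)$. By \eqref{eq:lambdaia} it is constant $=-\tfrac an-\tfrac12$ on the first $i-1$ coordinates and constant $=-\tfrac an+\tfrac12$ on the last $n-i$ coordinates, so (I) is automatic except at $m=i-1$ and $m=i$, where the equalities it demands are, by a direct check, exactly the defining formula for the middle coordinate of $\lambda(i,a)$; and since $\lambda(i,a)$ then takes at most three values in this ``step'' arrangement, in any quadruple $p<q<r<s$ the outer pairs cannot both straddle the exceptional coordinate (as $q<r$), while if the inner gap $q<r$ straddles it then $\lambda_p=-\tfrac an-\tfrac12$ and $\lambda_s=-\tfrac an+\tfrac12$, so $\lambda_p-\lambda_s+1=0$; thus (II) holds.

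The hard direction is ``only if''. For $n\ge4$ the relations $(\lambda_p-\lambda_q)(\lambda_r-\lambda_s)=0$ say that the integer intervals $[p,q]$ attached to all pairs with $\lambda_p\ne\lambda_q$ pairwise intersect, hence by the one-dimensional Helly property they share a common point; this forces $\lambda$ to be constant on an initial segment, constant on a final segment, with at most one exceptional coordinate $i$ in between. Plugging this profile into the second relation of (II) forces the final-segment value to exceed the initial-segment value by exactly $1$, and then the relations of (I) at $m=i-1$ and $m=i$ pin down the two common values and the exceptional coordinate as the explicit functions of $a$ and $i$ appearing in \eqref{eq:lambdaia}. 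For $n=3$, where (II) is vacuous, one instead solves the two equations of (I) together with $\sum_i\lambda_i=0$ directly, and the solution profiles are precisely $\lambda(1,a),\lambda(2,a),\lambda(3,a)$. I expect the main obstacle to be the bookkeeping in this last step: the boundary positions $i=1$ and $i=n$ (empty initial or final segment) and the various coincidences among the constant values must each be treated, and this is what turns the ``only if'' into real work rather than a substitution.
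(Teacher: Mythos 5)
Your reduction and computations coincide with the paper's: Lemma~\ref{lem:AnnL} is invoked in the same way, your $f_m$ is the paper's $C_i$ (after using $\sum_i\lambda_i=0$), your condition (I) is exactly \eqref{eq:F(1-1)a0}, and your quadruple conditions (II) are exactly the relations $(\lambda_i-\lambda_j)(\lambda_k-\lambda_l)=(\lambda_j-\lambda_k)(\lambda_i-\lambda_l+1)=0$ obtained in the proof of Lemma~\ref{lem:F(11-1-1)}. The only organizational difference is at the end: the paper first solves (II) alone (Lemma~\ref{lem:F(11-1-1)}, giving $\lambda=\lambda(i,c)$ for some $i$ and some $c\in\C$) and then uses (I) to force $c=a$, whereas you solve (I) and (II) jointly via the Helly/step-profile argument. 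That is a legitimate repackaging of the same idea, and your ``if'' direction is fine.

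The gap is in the ``only if'' step you defer as bookkeeping, and it is not only bookkeeping. Your forcing claim ``the second relation of (II) makes the final-segment value exceed the initial-segment value by exactly $1$'' needs a quadruple $p<q<r<s$ with $\lambda_q\neq\lambda_r$, and no such quadruple exists precisely in the cases you postpone: when the exceptional index is $1$ or $n$ (there (II) is vacuous on your profile and you must instead feed the profile, together with tracelessness, into (I) at the single jump to land on $\lambda(1,a)$ or $\lambda(n,a)$), and when $\lambda$ is constant, i.e.\ $\lambda=0$. The constant case is a genuine obstruction, not a case to be cleaned up: $\lambda=0$ satisfies (I) and (II) for every $a$ (indeed $\sym(F^a)\subset U(\mathfrak{g})\mathfrak{g}$, so it annihilates the trivial module $L(0)$ for every $a$), while $0=\lambda(i,a)$ only for $(i,a)=(1,n/2),(n,-n/2)$; so the biconditional as stated cannot be extracted from (I)+(II) at $\lambda=0$, and your plan of ``pinning down'' the values there has nothing to pin with, since (I) is vacuous when all $\lambda_m$ coincide. (The paper's own proof has the same blind spot: its final assertion that $\lambda(r,c)$ satisfies \eqref{eq:F(1-1)a0} exactly when $c=a$ fails for $(r,c)=(1,n/2)$ and $(n,-n/2)$.) A complete write-up must isolate $\lambda=0$ and either treat it separately or record it as an exception; note also that you say nothing about $n=2$, where $F^a=0$ and the statement has to be read differently, as the paper's own treatment of that case tacitly does.
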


For the proof of Proposition~\ref{prop:Fa}, we use the following 
\begin{lem}\label{lem:F(11-1-1)}
Assume $n\ge 4$. The symmetrization $\sym(F(e_1+e_2-e_{n-1}-e_n))$ annihilates $L(\lambda)$ if and only if $\lambda=\lambda(i,a)$ for some $1\le i\le n$ and $a\in\C$.
\end{lem}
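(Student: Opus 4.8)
The plan is to reduce the problem to a computation in the zero weight space, using Lemma~\ref{lem:AnnL}. By that lemma, $\sym(F(e_1+e_2-e_{n-1}-e_n))$ annihilates $L(\lambda)$ if and only if $\sym\bigl(F(e_1+e_2-e_{n-1}-e_n)_0\bigr)$ kills the highest weight vector $l_\lambda$. By Proposition~\ref{prop:F(11-1-1)0}, the zero weight space is spanned by the vectors $T(\mathcal{M}_{i,j,k,l})$ for $1\le i<j<k<l\le n$, so it suffices to understand, for each such quadruple and each $A\in\mathcal{M}_{i,j,k,l}$, when $\sym(T(A))\,l_\lambda=0$. Now $T(A)=\sum_{r<s}A_{r,s}(T_{r,r}T_{s,s}-T_{r,s}T_{s,r})$, and since $A_{r,s}=0$ unless $r<s$ and $(r,s)$ is a pair drawn from $\{i,j,k,l\}$, the relevant $T_{r,s}$ with $r<s$ are all in the nilradical $\mathfrak n$. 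Applying $\sym$ and then acting on $l_\lambda$: the term $T_{r,r}T_{s,s}$ acts by the scalar $\lambda_r\lambda_s$ (here I use $\lambda=\sum\lambda_ie_i$), while $\sym(T_{r,s}T_{s,r}) = \tfrac12(T_{r,s}T_{s,r}+T_{s,r}T_{r,s})$ acts on $l_\lambda$ by $\tfrac12[T_{r,s},T_{s,r}]\,l_\lambda = \tfrac12(\lambda_r-\lambda_s)\,l_\lambda$ because $T_{s,r}$ lowers and $T_{r,s}$ is raising (so $T_{r,s}l_\lambda=0$), using $[T_{r,s},T_{s,r}]=T_{r,r}-T_{s,s}$. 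Hence $\sym(T(A))\,l_\lambda = \bigl(\sum_{r<s}A_{r,s}(\lambda_r\lambda_s - \tfrac12(\lambda_r-\lambda_s))\bigr) l_\lambda$.

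So the annihilation condition becomes: for every $1\le i<j<k<l\le n$ and every $(a_1,a_2,a_3)$ with $a_1+a_2+a_3=0$, the linear combination
\begin{align}
a_1\bigl(\phi(i,j)+\phi(k,l)\bigr)+a_2\bigl(\phi(i,k)+\phi(j,l)\bigr)+a_3\bigl(\phi(i,l)+\phi(j,k)\bigr)=0,\notag
\end{align}
where $\phi(r,s):=\lambda_r\lambda_s-\tfrac12(\lambda_r-\lambda_s)$ for $r<s$. Since the $a_r$ range over the plane $\sum a_r=0$, this holds if and only if the three quantities $\phi(i,j)+\phi(k,l)$, $\phi(i,k)+\phi(j,l)$, $\phi(i,l)+\phi(j,k)$ are all equal. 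Subtracting, and using $\phi(r,s)=\lambda_r\lambda_s-\tfrac12\lambda_r+\tfrac12\lambda_s$, a direct expansion shows $\phi(i,j)+\phi(k,l)-\phi(i,k)-\phi(j,l) = (\lambda_j-\lambda_k)(\lambda_i-\lambda_l)-(\lambda_j-\lambda_k)=(\lambda_j-\lambda_k)(\lambda_i-\lambda_l-1)$, and similarly the other difference factors as $(\lambda_j-\lambda_l)(\lambda_i-\lambda_k-1)$ or the like. Thus the full system of conditions, as $(i,j,k,l)$ ranges over all increasing quadruples, is equivalent to: for all $i<j<k<l$, $(\lambda_j-\lambda_k)(\lambda_i-\lambda_l-1)=0$ and the analogous factored equations.

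The core of the argument is then a purely combinatorial analysis of this system of quadratic equations in $\lambda_1,\dots,\lambda_n$. I expect this to be the main obstacle: one must show the only solutions are the tuples $\lambda(i,a)$ of \eqref{eq:lambdaia}, which have the shape of a constant value $-a/n-\tfrac12$ in the first $i-1$ slots, a constant value $-a/n+\tfrac12$ in the last $n-i$ slots, and one exceptional middle entry, i.e. the multiset $\{\lambda_r\}$ takes essentially two values differing by $1$ (plus a correction). The strategy: from $(\lambda_j-\lambda_k)(\lambda_i-\lambda_l-1)=0$ for all $i<j<k<l$, deduce that among any four indices either two middle coordinates coincide or the outer two differ by exactly $1$; a pigeonhole/ordering argument then forces that all but at most one of the $\lambda_r$ take one of two values $c$ and $c-1$ (in the natural order, the $c-1$'s come before the $c$'s once we also use the other factored relations to pin down the sign and the position of the exceptional coordinate). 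Finally, $\sum_r\lambda_r=0$ and matching with the exceptional entry fixes $c$ in terms of a free parameter, which after renormalization is exactly $a$, yielding $\lambda=\lambda(i,a)$; conversely one checks \eqref{eq:lambdaia} satisfies all the equations, which is a short substitution. The converse direction is routine; the forward direction's case analysis (small $n$ like $n=4,5$ handled directly, then induction or a clean global argument for larger $n$) is where the real work lies.
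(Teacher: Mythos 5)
Your reduction is exactly the paper's: Lemma~\ref{lem:AnnL} together with Proposition~\ref{prop:F(11-1-1)0} brings everything down to the scalar identity $\sym(T_{r,r}T_{s,s}-T_{r,s}T_{s,r})\,l_\lambda=\bigl(\lambda_r\lambda_s-\tfrac12(\lambda_r-\lambda_s)\bigr)l_\lambda$ and to the requirement that the three sums attached to each quadruple $i<j<k<l$ coincide; up to that point your computation is correct. The factorizations you then assert are not. Expanding $\phi(i,j)+\phi(k,l)-\phi(i,k)-\phi(j,l)$ gives $(\lambda_j-\lambda_k)(\lambda_i-\lambda_l+1)$, not $(\lambda_j-\lambda_k)(\lambda_i-\lambda_l-1)$: the linear part comes out as $+(\lambda_j-\lambda_k)$. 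And the remaining independent difference is $(\lambda_i-\lambda_j)(\lambda_k-\lambda_l)$, with no shift at all, not ``$(\lambda_j-\lambda_l)(\lambda_i-\lambda_k-1)$ or the like''. These slips are not cosmetic: the sign of the shift decides whether the trailing coordinates exceed the leading ones by $1$ or the reverse, and with your version the weights $\lambda(i,a)$ of \eqref{eq:lambdaia} do \emph{not} satisfy the system (for $\lambda(2,a)$ and $(i,j,k,l)=(1,2,3,4)$ one has $\lambda_2\neq\lambda_3$ generically while $\lambda_1-\lambda_4-1=-2\neq 0$). So your ``routine'' converse check would in fact fail, and a consistent execution of your forward analysis would produce the order-reversed family rather than the $\lambda(i,a)$.

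The second problem is that you stop where the proof has to be finished: you declare the passage from the factored system to $\lambda=\lambda(i,a)$ to be ``where the real work lies'' and only gesture at a pigeonhole argument, so the only-if direction is never actually established. With the correct relations this step is short and should be carried out. From $(\lambda_i-\lambda_j)(\lambda_k-\lambda_l)=0$ for all $i<j<k<l$: if there were positions $r<s$ with $s>r+1$, $\lambda_r\neq\lambda_{r+1}$ and $\lambda_s\neq\lambda_{s+1}$, the quadruple $(r,r+1,s,s+1)$ would violate the relation; hence all ``jumps'' of the sequence sit in at most two adjacent positions, so $\lambda$ is constant on an initial block and on a final block with at most one exceptional coordinate between them. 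Then $(\lambda_j-\lambda_k)(\lambda_i-\lambda_l+1)=0$, applied to a quadruple with $j,k$ straddling a jump and $i,l$ inside the two outer blocks, forces the final block value to equal the initial value plus $1$ whenever both outer blocks are nonempty (when one is empty nothing is forced, giving the cases $i=1,n$); combined with $\sum_r\lambda_r=0$ this is exactly the list \eqref{eq:lambdaia}, and the converse is a direct substitution. As written, your proposal has both a wrong key identity and an unexecuted main step, so it does not yet prove the lemma.
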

\begin{proof}
By Lemma~\ref{lem:AnnL}, Proposition~\ref{prop:F(11-1-1)0} and the definition of $\mathcal{M}_{i<j<k<l}$, we see that $\sym(F(e_1+e_2-e_{n-1}-e_n))\subset \Ann L(\lambda)$ is equivalent to 
$\sym(T(\mathcal{M}_{i,j,k,l}))l_{\lambda}=0$ for $i<j<k<l$, and to
\begin{align}
&\sym(T_{i,i}T_{j,j}-T_{i,j}T_{j,i}+T_{k,k}T_{l,l}-T_{k,l}T_{l,k})l_{\lambda}\\
&=\sym(T_{i,i}T_{k,k}-T_{i,k}T_{k,i}+T_{j,j}T_{l,l}-T_{j,l}T_{l,j})l_{\lambda}\label{eq:Mijkl}\\
&=\sym(T_{i,i}T_{l,l}-T_{i,l}T_{l,i}+T_{j,j}T_{k,k}-T_{j,k}T_{k,j})l_{\lambda}
\end{align}
for $i<j<k<l$. 
Since we have 
\[\sym(T_{r,r}T_{s,s}-T_{r,s}T_{s,r})l_{\lambda}
=\{\lambda_r\lambda_s-(\lambda_r-\lambda_s)/2\}l_{\lambda}
=\{(\lambda_r+1/2)(\lambda_s-1/2)+1/4\}l_{\lambda}\] for $1\le r<s\le n$, 
the equations \eqref{eq:Mijkl} 
are equivalent to 
\begin{align}
&(\lambda_i+1/2)(\lambda_j-1/2)+(\lambda_k+1/2)(\lambda_l-1/2)\\\label{eq:lambdaijkl}
&=(\lambda_i+1/2)(\lambda_k-1/2)+(\lambda_j+1/2)(\lambda_l-1/2)\\
&=(\lambda_i+1/2)(\lambda_l-1/2)+(\lambda_j+1/2)(\lambda_k-1/2)
\end{align} 
for $i<j<k<l$. 
The equations $\eqref{eq:lambdaijkl}$ hold 
if and only if 
\[(\lambda_i-\lambda_l+1) (\lambda_j-\lambda_k)
=(\lambda_i-\lambda_j) (\lambda_k-\lambda_l)=0\quad\text{ for $i<j<k<l$,}\]
which implies the assertion by the condition $\sum_i \lambda_i=0$.
\end{proof}

\begin{proof}[Proof of Proposition~\ref{prop:Fa}]
By Lemma~\ref{lem:AnnL}, $\sym(F^a)\subset\Ann L(\lambda)$ is equivalent to $\sym(F^a_0)l_{\lambda}=0$. 

We see $F(e_1-e_n)^a_0=\{\sum_{i,k}A_iT_{i,k}T_{k,i}-a(n-2)/n\sum_iA_iT_{i,i}\mid \sum_iA_i=0\}$ and 
\begin{align}
&\sym\Bigl(\sum_{i,k}A_iT_{i,k}T_{k,i}-\frac{a(n-2)}{n}\sum_iA_iT_{i,i}\Bigr)l_{\lambda}\\
&=\Bigl(\sum_iA_i\lambda_i^2+\frac{1}{2}\sum_{i<k}A_i[T_{i,k},T_{k,i}]+\frac{1}{2}\sum_{i>k}A_i[T_{k,i},T_{i,k}]-\frac{a(n-2)}{n}\sum_iA_i\lambda_i\Bigr)l_{\lambda}\\
&=\sum_iA_iC_il_{\lambda},
\end{align}
where we put 
\[C_i:=\lambda_i^2+\left(\frac{n+1}{2}-i-\frac{a(n-2)}{n}\right)\lambda_i+\frac{1}{2}\sum_{k=1}^{i-1}\lambda_k-\frac{1}{2}\sum_{k=i+1}^{n}\lambda_k.\]
Therefore $\sym(F(e_1-e_n)^a_0)l_{\lambda}=0$ if and only if 
the scalar $C_i$ is independent of $1\le i\le n$, that is, 
\begin{align}\label{eq:F(1-1)a0}
(\lambda_{i+1}-\lambda_i)\left(\lambda_{i+1}+\lambda_i+\frac{n}{2}-i-\frac{a(n-2)}{n}\right)=0\quad \text{ for $1\le i\le n-1$.}
\end{align} 

When $n=2$, \eqref{eq:F(1-1)a0} holds and any element in $\mathfrak{h}^{\vee}$ can be written as $\lambda(1,a)$ for some $a\in\C$, and the assertion follows. 
When $n=3$, the condition $\sum \lambda_i=0$ implies that \eqref{eq:F(1-1)a0} is equivalent to $\lambda=(-1+2a/3,(1/2-a/3)\mathbf{1}_2),(-1/2-a/3,2a/3,1/2-a/3),((-1/2-a/3)\mathbf{1}_2,1+2a/3)$, which is our claim by $F^a_0=F(e_1-e_3)^a_0$ and \eqref{eq:lambdaia}. 

When $n\ge 4$, 
the definition of $F^a$, Lemma~\ref{lem:F(11-1-1)} and the above argument imply that $\sym(F^a_0)l_{\lambda}=0$ if and only if $\lambda$ is equal to $\lambda(r,c)$ satisfying \eqref{eq:F(1-1)a0} for some $1\le r\le n$ and $c\in\C$. 
Since $\lambda(r,c)$ satisfies \eqref{eq:F(1-1)a0} exactly when $c=a$, we obtain the desired conclusion. 
\end{proof}

\begin{rem}\label{rem:inf char}
The infinitesimal character of $L(\lambda(i,a))$ is the orbit 
through 
$\lambda(i,a)+\rho=\sum_{k=1}^{i-1}(n/2-k-a/n)e_k+a(n-1)/ne_i+\sum_{k=i+1}^{n}(n/2-k+1-a/n)e_k$ 
and does not depend on $1\le i\le n$. 
In particular, the Casimir element $\sym(\Omega)$ acts by 
\begin{align}
&\|\lambda(n,a)+\rho\|^2-\|\rho\|^2\\
&=\sum_{i=1}^{n-1}\left\{\left(\frac{n+1}{2}-i-\frac{2a+n}{2n}\right)^2-\left(\frac{n+1}{2}-i\right)^2\right\}+\frac{a^2(n-1)^2}{n^2}-\frac{(n-1)^2}{4}\\
&=\frac{(n-1)(2a+n)}{2n}\left(\frac{2a+n}{2n}-2\left(\frac{n+1}{2}-\frac{n}{2}\right)\right)+\frac{(n-1)^2(2a+n)(2a-n)}{4n^2}\\
&=\frac{(n-1)(2a+n)(2a-n)}{4n}, 
\end{align}
where the norm $\|\cdot\|$ on $\mathfrak{h}^{\vee}$ is induced from the invariant bilinear form $B$ defined in \eqref{eq:form}. 
\end{rem}

\begin{df}\label{df:Ja}
For $a\in\C$, we define $J_a$ to be the two-sided ideal in $U(\mathfrak{g})$ generated by the subspace $\sym(F^a)$ and the element $\sym(\Omega)-(n-1)(a+n/2)(a-n/2)/n$: 
\[J_a:=\left\langle \sym(X), \sym(\Omega)-\frac{(n-1)(2a+n)(2a-n)}{4n}\mid X\in F^a\right\rangle.\]
\end{df}

\begin{thm}\label{thm:Ja}
\begin{enumerate}
\item\label{ind:Ja}
Let $J$ be a two-sided ideal of $U(\mathfrak{g})$. 
The following are equivalent:
\begin{enumerate}
\item[(i)] 
$J=J_a$ for some $a\in\C$.
\item[(ii)] 
the associated graded ideal $\gr J$ is equal to the ideal $\mathcal{I}(\overline{\mathcal{O}^{\min}})$ defined by the closure $\overline{\mathcal{O}^{\min}}$ of the minimal nilpotent coadjoint orbit in $\mathfrak{g}^{\vee}$.
\item[(iii)] 
$J$ is completely prime, primitive and has $\overline{\mathcal{O}^{\min}}$ as its associated variety. 
\end{enumerate}
Moreover, we have $J_a=J_{a'}$ if and only if $a=a'$ when $n\ge 3$, and $a=\pm a'$ when $n=2$. 
\item\label{ind:JaL}
Let $\lambda\in\mathfrak{h}^{\vee}$ and $a\in\C$. Then the annihilator of $L(\lambda)$ is equal to $J_a$ if and only if $\lambda=\lambda(i,a)$ for some $1\le i\le n$ satisfying 
\begin{align}\label{eq:infdim}
\text{$a\not\in\frac{n}{2}+\N$ if $i=1$ and $a\not\in -\frac{n}{2}-\N$ if $i=n$.}
\end{align}
\end{enumerate}
\end{thm}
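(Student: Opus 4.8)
The plan is to prove Part~\eqref{ind:JaL} and the associated‑variety statements interwoven with Part~\eqref{ind:Ja}, since they feed one another. The main tool is a \emph{squeezing principle}: if $J'\subseteq U(\g)$ is a two‑sided ideal with $\mathcal{I}(\overline{\mathcal{O}^{\min}})\subseteq\gr J'$ and $\dim U(\g)/J'=\infty$, then $\gr J'=\mathcal{I}(\overline{\mathcal{O}^{\min}})$; indeed the associated variety $V(\gr J')$ is then a nonempty, closed, $G$‑stable cone contained in $\overline{\mathcal{O}^{\min}}$, hence equals $\overline{\mathcal{O}^{\min}}$ because the minimal orbit closure is the smallest nonzero nilpotent orbit closure, and then $\gr J'\subseteq\mathcal{I}(V(\gr J'))=\mathcal{I}(\overline{\mathcal{O}^{\min}})$. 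To use this I need two inputs: first, it is classical that $\mathcal{I}(\overline{\mathcal{O}^{\min}})$ is generated in degrees $\le 2$, with degree‑$2$ part $F(e_1+e_2-e_{n-1}-e_n)\oplus F(e_1-e_n)\oplus F(0)$ for $n\ge 4$ and the obvious analogue for $n=2,3$ by Proposition~\ref{prop:decomp of S2g} (the minimal orbit closure of $\g$ being a determinantal variety cut out by quadrics); second, the symbols (leading homogeneous terms) of the generators of $J_a$ span exactly this degree‑$2$ space, since the degree‑$1$ tails in $F(e_1-e_n)^a$ and in $\sym(\Omega)-(n-1)(2a+n)(2a-n)/(4n)$ are of lower order, so $\gr J_a\supseteq\mathcal{I}(\overline{\mathcal{O}^{\min}})$. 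Finally, Proposition~\ref{prop:Fa} gives $\sym(F^a)\subseteq\Ann L(\lambda(i,a))$ and Remark~\ref{rem:inf char} gives that $\sym(\Omega)$ acts on $L(\lambda(i,a))$ by $(n-1)(2a+n)(2a-n)/(4n)$, so $J_a\subseteq\Ann L(\lambda(i,a))$ for all $i$; and a glance at \eqref{eq:lambdaia} shows that $\lambda(i,a)$ is dominant integral — equivalently $L(\lambda(i,a))$ is finite dimensional — exactly in the cases excluded by \eqref{eq:infdim}, because the only simple‑root values of $\lambda(i,a)$ that can fail to lie in $\N$ are $\lambda_{i-1}-\lambda_i$ (when $i\ge 2$) and $\lambda_i-\lambda_{i+1}$ (when $i\le n-1$), and these two cannot simultaneously be in $\N$. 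In particular, for every $a$ some $L(\lambda(i,a))$ is infinite dimensional.

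With this, fix $i$ with $L(\lambda(i,a))$ infinite dimensional. Then $U(\g)/J_a$ is infinite dimensional, since a finite‑codimension two‑sided ideal cannot be contained in the annihilator of an infinite‑dimensional simple module; so the squeezing principle gives $\gr J_a=\mathcal{I}(\overline{\mathcal{O}^{\min}})$, which is (i)$\Rightarrow$(ii), and also $V(\gr J_a)=\overline{\mathcal{O}^{\min}}$ and, the latter ideal being prime so that $\gr U(\g)/J_a$ is a domain, $J_a$ is completely prime. Applying the squeezing principle again to $\Ann L(\lambda(i,a))$, which contains $J_a$ and has infinite‑dimensional quotient, gives $\gr\Ann L(\lambda(i,a))=\mathcal{I}(\overline{\mathcal{O}^{\min}})=\gr J_a$, whence $J_a=\Ann L(\lambda(i,a))$; this proves the ``if'' half of Part~\eqref{ind:JaL}, and realizing $J_a$ as the annihilator of a simple module shows it is primitive, completing (i)$\Rightarrow$(iii). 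For the ``only if'' half: if $\Ann L(\lambda)=J_a$, then $\sym(F^a)\subseteq J_a$ annihilates $L(\lambda)$, so $\lambda=\lambda(i,a)$ by Proposition~\ref{prop:Fa}, and $L(\lambda)$ is infinite dimensional because $J_a$ has infinite codimension; hence \eqref{eq:infdim} holds.

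For (ii)$\Rightarrow$(i), assume $\gr J=\mathcal{I}(\overline{\mathcal{O}^{\min}})$. As this ideal vanishes in degrees $\le 1$ and $1\notin J$, one has $J\cap U(\g)_{\le 1}=0$, so taking symbols gives a $\g$‑module isomorphism from $J\cap U(\g)_{\le 2}$ onto $\mathcal{I}(\overline{\mathcal{O}^{\min}})_2$. Decomposing $U(\g)_{\le 2}\cong\C\oplus\g\oplus S^2(\g)$ by $\sym$ and counting isotypic multiplicities via Proposition~\ref{prop:decomp of S2g}, a $\g$‑submodule with that symbol must be $\sym(F(e_1+e_2-e_{n-1}-e_n))\oplus\sym(F(e_1-e_n)^a)\oplus\C(\sym(\Omega)+c')$ for some $a,c'\in\C$: the component isomorphic to $F(e_1+e_2-e_{n-1}-e_n)$ occurs with multiplicity one and admits no lower‑order correction; the component isomorphic to $F(e_1-e_n)$ is a graph over the copy of $F(e_1-e_n)$ inside $\sym(S^2(\g))$ with a unique degree‑$1$ correction, which is precisely $\sym(F(e_1-e_n)^a)$ for a unique $a$; and the $F(0)$‑component is $\C(\sym(\Omega)+c')$ (the cases $n=2,3$ are similar or trivial). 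Then $J\supseteq\langle\sym(F^a),\sym(\Omega)+c'\rangle$, and since this sub‑ideal already has associated graded $\mathcal{I}(\overline{\mathcal{O}^{\min}})$, it equals $J$. It remains to identify $c'$: take any simple $U(\g)/J$‑module $M$; then $\sym(F^a)M=0$, and $\Ann M$ is primitive, so equals $\Ann L(\mu)$ for some $\mu$ by Duflo's theorem; hence $\sym(F^a)L(\mu)=0$ forces $\mu=\lambda(i,a)$ by Proposition~\ref{prop:Fa}, so by Remark~\ref{rem:inf char} the central element $\sym(\Omega)$ acts on $M$ by $(n-1)(2a+n)(2a-n)/(4n)$; but $\sym(\Omega)+c'\in J=\Ann M$ forces it to act by $-c'$, so $c'=-(n-1)(2a+n)(2a-n)/(4n)$ and $J=J_a$ by Definition~\ref{df:Ja}. (For $n=2$, $F^a=0$ and $J=\langle\sym(\Omega)+c'\rangle$ for some $c'$, which equals $J_a$ for any $a$ with $a^2=1-2c'$.)

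It remains to close the loop with (iii)$\Rightarrow$(ii) and to settle when $J_a=J_{a'}$. If $J$ is completely prime and primitive with $V(\gr J)=\overline{\mathcal{O}^{\min}}$, then $U(\g)/J$ is a domain and $\sqrt{\gr J}=\mathcal{I}(\overline{\mathcal{O}^{\min}})$; the equality $\gr J=\mathcal{I}(\overline{\mathcal{O}^{\min}})$ then follows from the normality of $\overline{\mathcal{O}^{\min}}$ — a filtered quantization of the normal ring $\C[\overline{\mathcal{O}^{\min}}]$ has reduced associated graded — exactly as in \cite[\S2]{GS04}, and this is the one place where the geometry of the minimal orbit enters. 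For the coincidence: if $J_a=J_{a'}$, choose $i$ with $L(\lambda(i,a))$ infinite dimensional; then $\sym(F^{a'})\subseteq J_a=\Ann L(\lambda(i,a))$ forces $\lambda(i,a)=\lambda(k,a')$ for some $k$ by Proposition~\ref{prop:Fa}, and comparing coordinates in \eqref{eq:lambdaia} yields $a=a'$ when $n\ge 3$ and $a=\pm a'$ when $n=2$; conversely, for $n=2$ one has $F^a=0$ and $(n-1)(2a+n)(2a-n)/(4n)$ depends only on $a^2$, so $J_a=J_{-a}$. The main obstacle is the (ii)$\Rightarrow$(i) step: passing from $\gr J=\mathcal{I}(\overline{\mathcal{O}^{\min}})$ to the family $J_a$ — where type $A$ differs essentially from the other simple types in producing a \emph{one‑parameter} family of lifts of the degree‑$2$ relations, namely the parameter $a$ in $F(e_1-e_n)^a$ — together with the use of Duflo's theorem to force the Casimir value; by contrast the normality input used in (iii)$\Rightarrow$(ii) is classical, and the rest is symbol bookkeeping resting on Propositions~\ref{prop:decomp of S2g} and~\ref{prop:Fa} and Remark~\ref{rem:inf char}.
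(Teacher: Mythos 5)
Your treatment of (i)$\Rightarrow$(ii),(iii), of part \eqref{ind:JaL}, and of (ii)$\Rightarrow$(i) is essentially sound and runs parallel to the paper (your ``squeezing principle'' is the paper's Garfinkle-based observation that any ad-invariant ideal strictly containing $\mathcal{I}(\overline{\mathcal{O}^{\min}})$ has finite codimension, and your isotypic/graph analysis of $J\cap U_{\le 2}$ plus Duflo and Remark~\ref{rem:inf char} is the paper's (ii)$\Rightarrow$(i) argument in slightly more detail). The genuine gap is (iii)$\Rightarrow$(ii). Your justification --- ``$\gr J=\mathcal{I}(\overline{\mathcal{O}^{\min}})$ follows from the normality of $\overline{\mathcal{O}^{\min}}$, since a filtered quantization of the normal ring $\C[\overline{\mathcal{O}^{\min}}]$ has reduced associated graded, exactly as in \cite{GS04}'' --- is circular: saying that $U(\g)/J$ is a filtered quantization of $\C[\overline{\mathcal{O}^{\min}}]$ \emph{means} $\gr J=\mathcal{I}(\overline{\mathcal{O}^{\min}})$, which is precisely what has to be proved. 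What (iii) gives you is only that $U(\g)/J$ is a domain, $J$ is primitive, and $\sqrt{\gr J}=\mathcal{I}(\overline{\mathcal{O}^{\min}})$; passing from ``the radical of $\gr J$ is $\mathcal{I}(\overline{\mathcal{O}^{\min}})$'' to ``$\gr J=\mathcal{I}(\overline{\mathcal{O}^{\min}})$'' is the hard step, and normality of the orbit closure alone supplies no such implication (there is no general principle that $\gr$ of a completely prime ideal is radical). Moreover \cite{GS04} is stated for $\g$ not of type $A$, so it cannot simply be cited here. The paper closes this direction by a genuinely different and substantive type-$A$ input: M{\oe}glin's theorem \cite{Moe87} that a completely prime primitive ideal of $U(\mathfrak{sl}(n,\C))$ is the annihilator of a module induced from a character $\chi$ of a standard parabolic $\mathfrak{q}$; the Gelfand--Kirillov dimension computation $\dim\g/\mathfrak{q}=\Dim(U(\g)/J)/2=n-1$ (via \cite{Jan83}) forces $\mathfrak{q}=\mathfrak{q}_{(1,n-1)}$ or $\mathfrak{q}_{(n-1,1)}$ and $\chi=\lambda(1,a)$ or $\lambda(n,a)$; Lemma~\ref{lem:mirabolic} then gives $J_a\subset J$, and $\gr J_a=\mathcal{I}(\overline{\mathcal{O}^{\min}})\supset\gr J$ finishes. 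Your proposal omits this input entirely, so the equivalence with (iii) is not established.

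Two smaller points. In the ``$J_a=J_{a'}$ implies $a=a'$'' step, the bare claim that $\lambda(i,a)=\lambda(k,a')$ forces $a=a'$ for $n\ge 3$ is false: $\lambda(1,n/2)=\lambda(n,-n/2)=0$. Your argument survives because you chose $i$ with $L(\lambda(i,a))$ infinite-dimensional, hence $\lambda(i,a)\neq 0$, but this must be said; the paper avoids the issue altogether by noting that $\gr J_a=\mathcal{I}(\overline{\mathcal{O}^{\min}})$ implies $J_a$ contains no nonzero element of $\g$, so that the difference of the generators of $\sym(F(e_1-e_n)^a)$ and $\sym(F(e_1-e_n)^{a'})$, which is a multiple of $A\in\g$, forces $a=a'$. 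Also, for $n=2$ one has $F^a=0$, so Proposition~\ref{prop:Fa} pins down nothing and ``comparing coordinates'' is not available; there the conclusion $a=\pm a'$ must come from comparing the Casimir constants of the generators $\sym(\Omega)-(a^2-1)/2$, as the paper does.
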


\begin{rem}\label{rem:Ja}
\begin{enumerate}
\item\label{ind:Joseph} 
When $\mathfrak{g}$ is simple Lie algebra not of type $A$, there uniquely exists the two-sided ideal $J$ satisfying (ii) (or equivalently, (iii)) \cite[Theorem 3.1]{GS04}. The ideal is called the Joseph ideal.
\item
The equivalence of (i) and (ii), 
and 
\eqref{ind:JaL} for the case $a=n/2-i, n/2-i+1$ or $i=1,n$ (that is, the case where $\lambda$ extends to a character of a standard maximal parabolic subalgebra) can be deduced form the argument in \cite[Section 7.8]{BJ98} (there is some typo: $c'$ should be $c'(n+2)/n$ except for \lq\lq$\phi=\phi^a+c'\phi^s$''). 
\end{enumerate}
\end{rem}

\begin{proof}[Proof of Theorem~\ref{thm:Ja}]
Let $a\in\C$. 
We first claim $\gr J_a=\mathcal{I}(\overline{\mathcal{O}^{\min}})$ and \eqref{ind:JaL}. 
By Proposition~\ref{prop:Fa} and Remark~\ref{rem:inf char}, we have $J_a\subset\Ann L(\lambda)$ if and only if $\lambda=\lambda(i,a)$ for $1\le i\le n$. 
Hence the proof of \eqref{ind:JaL} is reduced to show that $J_a=\Ann L(\lambda(i,a))$ exactly when the condition \eqref{eq:infdim} holds. 

By \cite[Theorem I\hspace{-.1em}I\hspace{-.1em}I.2.1]{Gar82} and Proposition~\ref{prop:decomp of S2g}, the ideal $\mathcal{I}(\overline{\mathcal{O}^{\min}})$ is generated by $F(e_1+e_2-e_{n-1}-e_n)$ (if $n\ge 4$), $F(e_1-e_n)$ (if $n\ge 3$) and $F(0)$. 
Therefore the definition of $J_a$ and $J_a\subset\Ann L(\lambda(i,a))$ shows $\mathcal{I}(\overline{\mathcal{O}^{\min}})\subset \gr J_a\subset \gr\Ann L(\lambda(i,a))$. 

Any ideal strictly containing $\mathcal{I}(\overline{\mathcal{O}^{\min}})$ has finite-codimension in $S(\mathfrak{g})$ 
since $S^k(\g)$ decomposes to the direct sum of the $k$-th Cartan power of $\mathfrak{g}$ and $\mathcal{I}(\overline{\mathcal{O}^{\min}})\cap S^k(\g)$ as a $\g$-module for $k\in\N$ \cite[Proposition I\hspace{-.1em}I\hspace{-.1em}I.1.1]{Gar82}. 
Moreover, we see that $\gr\Ann L(\lambda(i,a))$ has infinite-codimension in $S(\mathfrak{g})$ if and only if $L(\lambda(i,a))$ is infinite-dimensional, or 
the condition \eqref{eq:infdim} holds. 
In particular, the ideal $J_a$ has infinite-codimension by $J_a\subset \Ann L(1,a)\cap \Ann L(n,a)$ and $(n/2+\N)\cap(-n/2+\N)=\emptyset$. 
Therefore we obtain $\mathcal{I}(\overline{\mathcal{O}^{\min}})=\gr J_a\subset \gr\Ann L(\lambda(i,a))$ and the last inclusion becomes an equality exactly in the case \eqref{eq:infdim}, which proves our claims. 

We next prove \eqref{ind:Ja}. 
By $\mathcal{I}(\overline{\mathcal{O}^{\min}})=\gr J_a$, the ideal $J_a$ is completely prime since $\mathcal{I}(\overline{\mathcal{O}^{\min}})$ is prime in $S(\mathfrak{g})$, and the associated variety $\Ass J_a$ is $\overline{\mathcal{O}^{\min}}$. 
Hence (i) implies (ii) and (iii).

Let us prove (ii)$\Rightarrow$(i). Assume (ii). 
By the above result by Garfinkle, there exist scalars $a, c\in\C$ such that $F^a$ and $\sym(\Omega)-c$ is included in $J$. 
Take a maximal (hence primitive) ideal $J^{\max}$ containing $J$. 
By the work of Duflo \cite[Satz 7.3]{Jan83}, $J^{\max}$ is the annihilator of some irreducible highest weight module. 
It follows from $F^a\subset J^{\max}$ and Proposition~\ref{prop:Fa} that $J^{\max}=\Ann L(\lambda(i,a))$ for some $1\le i\le n$. 
Since $J^{\max}$ contains $\sym(\Omega)-(n-1)(a-n/2)(a+n/2)/n$ and does not contain scalars, we see $c=(n-1)(a-n/2)(a+n/2)/n$ and $J_a\subset J$.
Then $\gr J_a=\mathcal{I}(\overline{\mathcal{O}^{\min}})=\gr J$ implies $J=J_a$, and (i) follows. 

Let us prove (iii)$\Rightarrow$(i). Assume (iii). 
By \cite[Th\'eor\`eme I\hspace{-.1em}V.1]{Moe87}, there exist a standard parabolic subalgebra $\mathfrak{q}$ and a character $\chi$ of $\mathfrak{q}$ such that $J=\Ann U(\mathfrak{g})\otimes_{U(\mathfrak{q})}\chi$. 
By \cite[Satz 10.9]{Jan83}, we have 
\[\dim \mathfrak{g}/\mathfrak{q}=\Dim (U(\mathfrak{g})\otimes_{U(\mathfrak{q})}\chi)= \Dim (U(\mathfrak{g})/J)/2=\dim\overline{\mathcal{O}^{\min}}/2=n-1,\]
where $\Dim$ denotes the Gelfand-Kirillov dimension. 
Therefore $\mathfrak{q}$ is the maximal parabolic subalgebra corresponding to the partition $(1,n-1)$ or the one corresponding to $(n-1,1)$, and $\chi$ is equal to $\lambda(1,a)$ or $\lambda(n,a)$ on $\mathfrak{h}$ for some $a\in\C$, respectively. 
By Lemma~\ref{lem:mirabolic}, we see $J_a\subset J$, and (i) follows from $\gr J_a=\mathcal{I}(\mathcal{O}^{\min})\supset \gr J$. 

Finally, we prove the last assertion of \eqref{ind:Ja}. 
Suppose $J_a=J_{a'}$. It suffices to show $a=a'$ when $n\ge 3$, and $a=\pm a'$ when $n=2$. 
By $\gr J_a=\mathcal{I}(\mathcal{O}^{\min})$, we see that $J_a$ does not contain a nonzero element in $\mathfrak{g}$ nor a nonzero scalar. 
Therefore the nonzeroness of $F^a$ and $F^{a'}$ shows $a=a'$ when $n\ge 3$. When $n=2$, the ideal $J_a$ is generated by $\sym(\Omega)-(a^2-1)/2$ and we see $a=\pm a'$. 
\end{proof}

The next lemma is used not only in the proof of Theorem~\ref{thm:Ja} but also in the construction of minimal $(\mathfrak{g},\mathfrak{k})$-modules in Theorem~\ref{thm:slnR}. 
\begin{lem}\label{lem:mirabolic}
Let $a\in\C$ and $\mathfrak{q}$ be the standard parabolic subalgebra $\mathfrak{q}_{(1,n-1)}$ (resp. $\mathfrak{q}_{(n-1,1)}$) corresponding to the partition $(1,n-1)$ (resp. $(n-1,1)$). 
Then the ideal $J_a$ annihilates the generalized Verma module $U(\mathfrak{g})\otimes_{U(\mathfrak{q})}\C_{\lambda(1,a)}$ (resp. $U(\mathfrak{g})\otimes_{U(\mathfrak{q})}\C_{\lambda(n,a)}$).
\end{lem}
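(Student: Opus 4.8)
The plan is to reduce the claim to the statement, already established in the proof of Theorem~\ref{thm:Ja}\eqref{ind:JaL}, that $J_a$ annihilates the irreducible highest weight module $L(\lambda(1,a))$ (resp.\ $L(\lambda(n,a))$). First I would recall that the generalized Verma module $N:=U(\mathfrak{g})\otimes_{U(\mathfrak{q})}\C_{\lambda}$ with $\lambda=\lambda(1,a)$ (resp.\ $\lambda(n,a)$) is well-defined: one must check that $\lambda(1,a)$ (resp.\ $\lambda(n,a)$) indeed extends to a character of $\mathfrak{q}_{(1,n-1)}$ (resp.\ $\mathfrak{q}_{(n-1,1)}$), i.e.\ that $\lambda$ vanishes on the coroots of the Levi simple part. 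For $\lambda(1,a)$ the entries indexed $2,\dots,n$ are all equal to $(-a+n/2)/n$ by \eqref{eq:lambdaia}, so $\langle\lambda(1,a),(e_i-e_{i+1})^{\vee}\rangle=0$ for $2\le i\le n-1$; symmetrically for $\lambda(n,a)$ the first $n-1$ entries coincide. Hence $N$ makes sense.

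Next I would use Lemma~\ref{lem:AnnL} together with the defining generators of $J_a$. By Definition~\ref{df:Ja}, $J_a$ is generated by $\sym(F^a)$ and $\sym(\Omega)-(n-1)(2a+n)(2a-n)/(4n)$. Since $F^a$ is an $\mathfrak{ad}(\mathfrak{g})$-stable subspace of $U(\mathfrak{g})$, and $N$ is a highest weight module with highest weight vector $v_\lambda:=1\otimes 1$ of weight $\lambda$, it suffices by (the proof of) Lemma~\ref{lem:AnnL} to check that $\sym(F^a_0)v_\lambda=0$ and that $\sym(\Omega)$ acts on $v_\lambda$ by the scalar $(n-1)(2a+n)(2a-n)/(4n)$. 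The action of $\sym(\Omega)$ on a highest weight vector of weight $\lambda$ depends only on $\lambda$ through $\|\lambda+\rho\|^2-\|\rho\|^2$, which by Remark~\ref{rem:inf char} equals exactly that scalar for $\lambda=\lambda(1,a)$ or $\lambda(n,a)$; so the Casimir generator kills $v_\lambda$. For the generator $\sym(F^a_0)$: the computation that $\sym(F^a_0)l_{\lambda(i,a)}=0$ carried out in the proof of Proposition~\ref{prop:Fa} is a computation purely about the action of degree-$\le 2$ elements of $U(\mathfrak{g})$ on a highest weight vector, hence applies verbatim with $L(\lambda(1,a))$ replaced by the Verma-type module $N$ and $l_{\lambda}$ replaced by $v_{\lambda}$. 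Therefore $\sym(F^a)v_\lambda=0$.

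Once both families of generators of $J_a$ annihilate the cyclic generator $v_\lambda$, the standard argument of Lemma~\ref{lem:AnnL} (the identity $\sym(F)N=U(\mathfrak{n}^-)\proj_2(\sym(F))v_\lambda$, valid for any highest weight module since its proof only uses the PBW decomposition and does not use irreducibility) shows $\sym(F^a)N=0$; and the Casimir generator, being central, acts on all of $N=U(\mathfrak{n}^-)v_\lambda$ by the same scalar it acts by on $v_\lambda$, hence also annihilates $N$. Since these generate $J_a$ as a two-sided ideal, $J_a N=0$, i.e.\ $J_a\subset\Ann N$, which is the assertion.

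The only genuine obstacle is bookkeeping rather than conceptual: one must be sure that Lemma~\ref{lem:AnnL}, which is stated for $L(\lambda)$, applies to the possibly reducible module $N$. As noted above its proof goes through unchanged — the maximality of the proper submodule of $M(\lambda)$ was never used, only that $N$ is a highest weight module, so that $\sym(F)N$ is a $\mathfrak{g}$-submodule whose $\lambda$-weight space is $\sym(F_0)v_\lambda$; alternatively one can invoke that $N$ is a quotient of the full Verma module $M(\lambda)$ and apply Lemma~\ref{lem:AnnL}'s proof to $M(\lambda)$ directly. Either way the argument is immediate once the two generator computations are in hand, both of which are already done elsewhere in the section.
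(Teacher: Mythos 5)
There is a genuine gap at the decisive step. Your reduction rests on the claim that once $\sym(F^a_0)v_\lambda=0$ is known, ``the standard argument of Lemma~\ref{lem:AnnL}'' yields $\sym(F^a)N=0$ for the generalized Verma module $N$, because ``the maximality of the proper submodule of $M(\lambda)$ was never used.'' But it was used, and essentially: the proof of Lemma~\ref{lem:AnnL} only shows that $\sym(F)M(\lambda)$ is a \emph{proper} submodule of $M(\lambda)$ (its $\lambda$-weight space vanishes), and then concludes $\sym(F)L(\lambda)=0$ precisely because every proper submodule of $M(\lambda)$ lies in the unique maximal one. For a general quotient such as $N=U(\mathfrak{g})\otimes_{U(\mathfrak{q})}\C_{\lambda(1,a)}$, properness of $\sym(F^a)N$ gives nothing: the identity $\sym(F^a)N=U(\mathfrak{n}^-)\proj_2(\sym(F^a))v_\lambda$ forces vanishing only of the weight-$\lambda$ component, while the nonzero-weight components of $\sym(F^a)$ applied to $v_\lambda$ are not controlled by the computation of Proposition~\ref{prop:Fa}. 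Equivalently, your intermediate assertion that ``both families of generators of $J_a$ annihilate the cyclic generator $v_\lambda$'' was never established: you verified this only for $\sym(F^a_0)$ and for the Casimir. Indeed, in the ordinary Verma module $M(\lambda(1,a))$ the lowest weight vector $\sym\bigl(\sum_k T_{n,k}T_{k,1}-\frac{a(n-2)}{n}T_{n,1}\bigr)$ of $F^a$ does \emph{not} kill the highest weight vector, so no argument that sees only the highest weight structure can succeed; and the lemma cannot be rescued by pretending $N$ is irreducible, since it is applied (in Theorem~\ref{thm:Ja} (iii)$\Rightarrow$(i) and in Theorem~\ref{thm:slnR}) exactly in cases where $N$ is reducible, e.g.\ $a\in n/2+\N$.

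What the paper actually does, and what is missing from your proposal, is to exploit the generalized Verma structure: since $\lambda(1,a)$ is a character of $\mathfrak{q}$, the commutator algebra $[\mathfrak{q},\mathfrak{q}]$ annihilates $v_\lambda$ (in particular the Levi root vectors $T_{n-1,2}$ and $T_{n,k}$, $2\le k\le n-1$). Using that $F^a$ is generated from its lowest weight vectors under $\ad(\mathfrak{n})$, one reduces to checking $\sym\bigl((F^a)^-\bigr)v_\lambda=0$, and this is then a short direct computation with the two lowest weight vectors $T_{n,1}T_{n-1,2}-T_{n,2}T_{n-1,1}$ (for $n\ge 4$) and $\sum_k T_{n,k}T_{k,1}-\frac{a(n-2)}{n}T_{n,1}$, where the specific value $\lambda=\lambda(1,a)$ makes the scalar coefficient of $T_{n,1}v_\lambda$ vanish. (The $(n-1,1)$ case is then obtained by the outer involution $X\mapsto -J\,{}^tX J$, or by a symmetric computation.) Your treatment of the Casimir generator and the well-definedness of the inducing character is fine, but without the verification that the negative-weight part of $\sym(F^a)$ kills $v_\lambda$, the proof does not go through.
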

\begin{proof}
Let $J$ be the anti-diagonal $n$-by-$n$ matrix whose anti-diagonal entries are one. 
The involution on $\g$ defined by $X\mapsto -J{}^tXJ$ for $X\in\g$ preserves $\mathfrak{h}$.  
It maps $\mathfrak{q}_{(1,n-1)}$ onto $\mathfrak{q}_{(n-1,1)}$, and induces an isomorphism on $\mathfrak{h}^{\vee}$ sending $\lambda(1,a)$ to $\lambda(n,-a)$ for $a\in\C$. 
Hence it suffices to show our assertion for $\mathfrak{q}=\mathfrak{q}_{(1,n-1)}$. 

Since $J_a$ and $\Ann (U(\mathfrak{g})\otimes_{U(\mathfrak{q})}\C_{\lambda(1,a)})$ have the same infinitesimal character, it suffices to prove $\sym(F^a)\subset \Ann (U(\mathfrak{g})\otimes_{U(\mathfrak{q})}\C_{\lambda(1,a)})$. 
Let $m_{\lambda(1,a)}$ be a nonzero highest weight vector of weight $\lambda(1,a)$ in $U(\mathfrak{g})\otimes_{U(\mathfrak{q})}\C_{\lambda(1,a)}$, and write $(F^a)^{-}$ for the space of lowest weight vectors in $F^a$. We have 
\[\sym(F^a)U(\mathfrak{g})m_{\lambda(1,a)}
=U(\mathfrak{g})\sym(F^a)m_{\lambda(1,a)}
=U(\mathfrak{g})\sym((F^a)^{-})m_{\lambda(1,a)}.\]
In the last equality, we used $\sym([\mathfrak{n},(F^a)^-])m_{\lambda(1,a)}=[\mathfrak{n},\sym((F^a)^-)]m_{\lambda(1,a)}=\mathfrak{n}\sym((F^a)^-)m_{\lambda(1,a)}$, where $\mathfrak{n}$ denotes 
the subalgebra of $\g$ consisting of all positive root vectors. 
Therefore we are reduced to see $\sym((F^a)^{-})m_{\lambda(1,a)}=0$. 
From Proposition~\ref{prop:decomp of S2g}, 
the subspace $(F^a)^-$ is spanned by 
$T_{n,1}T_{n-1,2}-T_{n,2}T_{n-1,1}$ (if $n\ge 4$) and $\sum_kT_{n,k}T_{k,1}-a(n-2)/n T_{n,1}$.
By $T_{n-1,2}$ (if $n\ge 4$), $T_{n,k}\in [\mathfrak{q},\mathfrak{q}]$ for $2\le k\le n-1$, 
we obtain
\begin{align}
&\sym(T_{n,1}T_{n-1,2}-T_{n,2}T_{n-1,1})m_{\lambda(1,a)}=0\quad\text{ if $n\ge 4$},\\
&\sym\Bigl(\sum_kT_{n,k}T_{k,1}-\frac{a(n-2)}{n}T_{n,1}\Bigr)m_{\lambda(1,a)}\\
&=\left(\lambda(1,a)_1-\frac{1}{2}+\lambda(1,a)_n+\frac{n-1}{2}-\frac{a(n-2)}{n}\right)T_{n,1}m_{\lambda(1,a)}
=0,
\end{align}
and the proof is complete. 
\end{proof}

\section{Minimal representations}\label{sec:minimal reps}
In this section, we extend the definition of minimal $(\mathfrak{g},\mathfrak{k})$-modules to real simple Lie algebras including type $A$, and describe some properties on their $\mathfrak{k}$-types and a criterion for isomorphism. 
In this section, we do not restrict ourselves to simple Lie algebras of type $A$. 

We will write $\mathfrak{g}_0$ for a real simple Lie algebra whose complexification $\mathfrak{g}$ is simple. 
Fix a Cartan involution $\theta$ on $\mathfrak{g}_0$, and write $\mathfrak{g}_0=\mathfrak{k}_0+\mathfrak{p}_0$ for the Cartan decomposition with respect to $\theta$. 

\begin{df}\label{df:minimal}
\begin{enumerate}
\item
An irreducible $(\mathfrak{g},\mathfrak{k})$-module is called \emph{minimal} if the associated graded ideal of the annihilator is the ideal defined by the closure of the minimal nilpotent coadjoint orbit in $\g^{\vee}$. 
\item
Let $a\in\C$ and assume $\mathfrak{g}=\mathfrak{sl}(n,\C)$ ($n\ge 2$). 
An irreducible $(\mathfrak{g},\mathfrak{k})$-module is called \emph{$a$-minimal} if the annihilator equals the ideal $J_a$ (see Definition~\ref{df:Ja}).
\item
An irreducible admissible representation of a simple Lie group $G$ is called minimal (resp. $a$-minimal) if the underlying $(\g,\mathfrak{k})$-module is minimal (resp. $a$-minimal). 
\end{enumerate}
\end{df}

\begin{rem}\label{rem:minimal}
Let $V$ be an irreducible $(\mathfrak{g},\mathfrak{k})$-module. 
\begin{enumerate}
\item\label{ind:minimal}
When $\mathfrak{g}$ is not of type $A$, the $(\mathfrak{g},\mathfrak{k})$-module $V$ is minimal if and only if the annihilator of $V$ is the Joseph ideal (see Remark~\ref{rem:Ja} \eqref{ind:Joseph}). 
When $\mathfrak{g}=\mathfrak{sl}(n,\C)$ ($n\ge 2$), $V$ is minimal if and only if $V$ is $a$-minimal for some $a\in\C$, which holds if and only if $\Ann V$ is a completely prime primitive ideal whose associated variety is the closure of the minimal nilpotent orbit by Theorem~\ref{thm:Ja} \eqref{ind:Ja}. 
\item\label{ind:necessary}
Assume $V$ is minimal. Write $\mathfrak{k}^{\perp}:=\{X\in\g^{\vee}\mid X(\mathfrak{k})=0\}$. 
Since $V$ is infinite-dimensional, we have 
$0\subsetneq\Ass V\subset (\Ass\Ann V)\cap\mathfrak{k}^{\perp}=\overline{\mathcal{O}^{\min}}\cap\mathfrak{k}^{\perp}=(\mathcal{O}^{\min}\cap\mathfrak{k}^{\perp})\cup\{0\}$. 
Hence, for the existence of minimal $(\mathfrak{g},\mathfrak{k})$-modules, we have a necessary condition 
\begin{align}\label{eq:necessary}
\mathcal{O}^{\min}\cap\mathfrak{k}^{\perp}\neq\emptyset.
\end{align}
In particular, there exist no minimal $(\mathfrak{g},\mathfrak{k})$-modules for $\mathfrak{g}_0=\mathfrak{su}(n), \mathfrak{sl}(n,\Ha)$ $(n\ge 2)$ by \cite[Proposition 4.1]{Oku15}. 
\item\label{ind:dualconj}
Assume $\mathfrak{g}=\mathfrak{sl}(n,\C)$ ($n\ge 2$) and $V$ is $a$-minimal. Then the contragredient $(\mathfrak{g},\mathfrak{k})$-module of $V$ is $(-a)$-minimal. 
When $\mathfrak{g}_0$ is $\mathfrak{su}(p,q)$ with $p+q=n$ (resp. $\mathfrak{sl}(n,\R)$), the complex conjugate of $V$ is $(-\overline{a})$-minimal (resp. $\overline{a}$-minimal). 
In particular, if $n\ge 3$ and $V$ admits a nondegenerate invariant Hermitian form, then $a\in\R$ (resp. $a\in\im\R$) by Theorem~\ref{thm:Ja} \eqref{ind:Ja}.
\end{enumerate}
\end{rem}

For a reductive Lie algebra $\mathfrak{l}$, we write $\mathfrak{l}_{\semi}$ and $\mathfrak{z}(\mathfrak{l})$ for the derived subalgebra $[\mathfrak{l}, \mathfrak{l}]$ and the center, respectively. 

From now, we also assume \eqref{eq:necessary}. 
Fix a Cartan subalgebra $\mathfrak{t}_0$ of $\mathfrak{k}_0$. 
Define $\mathfrak{h}^{\cpt}_0$ to be the centralizer $\mathfrak{z}_{\mathfrak{g}_0}(\mathfrak{t}_0)$ of $\mathfrak{t}_0$ in $\mathfrak{g}_0$. 
Then $\mathfrak{h}^{\cpt}_0$ is a maximally compact Cartan subalgebra of $\mathfrak{g}_0$ \cite[Proposition 6.60]{Kna02}. 
By the assumption \eqref{eq:necessary}, we can take a positive system $\Sigma(\mathfrak{g},\mathfrak{h}^{\cpt})^+$ of the root system $\Sigma(\mathfrak{g},\mathfrak{h}^{\cpt})$ such that the highest root $\psi$ is noncompact imaginary and $\theta\phi\in\Sigma(\mathfrak{g},\mathfrak{h}^{\cpt})^+$ for any $\phi\in\Sigma(\mathfrak{g},\mathfrak{h}^{\cpt})^+$. 
Then $\Sigma(\mathfrak{g},\mathfrak{h}^{\cpt})^+$ defines a positive system for $(\mathfrak{k}_{\semi},\mathfrak{k}_{\semi}\cap\mathfrak{t})$ by 
\[
\Sigma(\mathfrak{k}_{\semi},\mathfrak{k}_{\semi}\cap\mathfrak{t})^+:=\Set{\mu|\phi\in\Sigma(\mathfrak{g},\mathfrak{h}^{\cpt})^+, \phi=0\text{ on }\mathfrak{z}(\mathfrak{k}), \mu=\phi|_{\mathfrak{k}_{\semi}\cap\mathfrak{t}}}.
\] 

Let $(\cdot,\cdot)$ be the inner product on the real vector space spanned the set of roots $\Sigma(\mathfrak{g},\mathfrak{h}^{\cpt})$ induced by a nondegenerate invariant bilinear form $B$ on $\mathfrak{g}$. 
We normalize the invariant form so that $(\psi,\psi)=2$. 
Let $\mathfrak{k}^1, \mathfrak{p}^1$ be the subspaces of $\mathfrak{k}, \mathfrak{p}$ spanned by weight spaces of $\mathfrak{t}$-weights 
whose inner product with $\psi$ are $(\psi,\psi)/2=1$, respectively. 
We will write $\rho(\mathfrak{k}^1,\mathfrak{t})$ for half the sum of such roots in $\Sigma(\mathfrak{k},\mathfrak{t})$. 
Set $(\C\psi)^{\perp}:=\C\Set{H_{\phi}\in\mathfrak{h}^{\cpt}|\phi\in\Sigma(\mathfrak{g},\mathfrak{h}^{\cpt}), (\phi,\psi)=0}$, where $H_{\phi}$ denotes the element in $\mathfrak{h}^{\cpt}$ corresponding to $\phi\in (\mathfrak{h}^{\cpt})^{\vee}$ under $B$. 
Let $\mathfrak{t}_{\Heis}^{\perp}$ be the set of elements in $\mathfrak{t}^{\vee}$ annihilating $(\C\psi)^{\perp}\cap\mathfrak{t}$. 
Here we regard $\mathfrak{t}^{\vee}$ as a subspace of $\mathfrak{h}^{\cpt}$ via the invariant form. 

Let us check
\begin{align}\label{eq:line}
\mathfrak{t}_{\Heis}^{\perp}
=\begin{cases}
\C\psi&\text{ if $\mathfrak{g}_0\not\cong\mathfrak{su}(p,q)$,}\\
\C(e_1-e_n)+\C(e_1+e_n-2/n\sum_ie_i)&\text{ if $\mathfrak{g}_0=\mathfrak{su}(p,q)$.}
\end{cases}
\end{align}
Here we put $n=p+q$ and $\{e_i-e_{i+1}\}_{1\le i<n}$ denotes the simple roots of $\Sigma(\g,\mathfrak{h}^{\cpt})$ in the latter case. 
If $\g$ is not of type $A$, 
the extended Dynkin diagram of $\g$ shows that the $\theta$-stable subspace $(\C\psi)^{\perp}$ contains vectors corresponding to $(\rank \g -1)$ simple roots. Hence $\mathfrak{t}_{\Heis}^{\perp}=\C\psi$. 
By the assumption \eqref{eq:necessary}, the remaining cases are $\g=\mathfrak{sl}(n,\R), \mathfrak{su}(p,q)$. 
For these cases, 
the extended Dynkin diagram of $\g$ shows 
$\C(e_1-e_n)\subset \mathfrak{t}_{\Heis}^{\perp}\subset\C(e_1-e_n)+\C(e_1+e_n-2/n\sum_ie_i)$. 
Calculating $\dim ((\C\psi)^{\perp}\cap\mathfrak{t})$ for these cases, we obtain \eqref{eq:line}.

Let us see some basic properties of minimal $(\mathfrak{g},\mathfrak{k})$-modules: 
\begin{prop}\label{prop:properties}
Let $V$ be a minimal $(\mathfrak{g},\mathfrak{k})$-module. Then the following hold. 
\begin{enumerate}
\item\label{ind:multiplicity} 
$V$ is $\mathfrak{k}$-multiplicity free.
\item\label{ind:k-type}
Assume that a nonzero root vector of root $\psi$ annihilates no nonzero element in $V$. 
Then the highest weights of $\mathfrak{k}$-types in $V$ belong to the subspace 
\[
-\rho(\mathfrak{k}^1,\mathfrak{t})+\mathfrak{t}_{\Heis}^{\perp}. 
\]
\item\label{ind:criterion}
Let $V'$ be a $(\mathfrak{g},\mathfrak{k})$-module. Assume $V'$ has a common $\mathfrak{k}$-type as $V$ and $\Ann V\subset \Ann V'$. Then $V'$ is isomorphic to $V$. 
\end{enumerate}
\end{prop}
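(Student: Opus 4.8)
The plan is to treat the three parts in sequence, with (1) and (3) feeding into each other and (2) being a weight-lattice computation. For part (1), I would start from the fact that $V$ is minimal, so $\gr\Ann V = \mathcal{I}(\overline{\mathcal{O}^{\min}})$ and hence $\Ass V$ is an irreducible component of $\overline{\mathcal{O}^{\min}}\cap\mathfrak{k}^{\perp}$, a single nilpotent $K_{\C}$-orbit closure of half the dimension of $\mathcal{O}^{\min}$. The associated cycle of $V$ then has a single term $m\cdot[\overline{\mathcal{O}}]$, and the multiplicity $m$ equals, by Vogan's theory, the generic rank of $V$ as a module over the ring of functions on that orbit; I would quote the known fact (as in the type-not-$A$ classification \cite{Tam19}) that for the minimal nilpotent $K_{\C}$-orbit this multiplicity is $1$, which forces $V$ to be $\mathfrak{k}$-multiplicity-free because any $\mathfrak{k}$-type appearing with multiplicity $\ge 2$ would push the associated-cycle multiplicity above $1$. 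Concretely the argument runs through the Heisenberg-type $\mathfrak{sl}(2)$ or $\mathfrak{sl}(2)\times\mathfrak{sl}(2)$ attached to $\psi$: the joint action of $\mathfrak{k}$ and the $\psi$-root vector organizes $V$ into a ``pencil'' whose layers are single $\mathfrak{k}$-irreducibles.

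For part (2), I would use the hypothesis that a nonzero root vector $X_{\psi}$ acts injectively on $V$. Pick a $\mathfrak{k}$-type with highest weight $\mu$ and highest weight vector $v$; then $X_{\psi}v$ is again $\mathfrak{t}$-extremal (its weight $\mu+\psi|_{\mathfrak t}$ is $\mathfrak{k}$-dominant because $\psi$ has maximal inner product with itself among $\mathfrak{t}$-weights of $\mathfrak{p}$), and iterating, the $\mathfrak{k}$-types of $V$ form an arithmetic progression $\mu_0, \mu_0+\psi|_{\mathfrak t},\ldots$, i.e. a pencil in the sense alluded to in Corollary~\ref{cor:ht}. The constraint that $\Ann V$ contains $\sym(F^a)$ (or the corresponding generators of the Joseph ideal in the non-type-$A$ case) imposes, via the annihilation conditions computed in Propositions~\ref{prop:Fa}–\ref{prop:F(11-1-1)0} restricted to $\mathfrak{k}$, that the "transverse" part of $\mu$ — its component in $(\C\psi)^{\perp}\cap\mathfrak{t}$ — is pinned down. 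The bookkeeping device is the shifted lattice: writing $\mu = -\rho(\mathfrak{k}^1,\mathfrak{t}) + \nu$, the quadratic relations coming from $F(e_1+e_2-e_{n-1}-e_n)$ and $F(e_1-e_n)^a$ become linear in $\nu$ after the $\rho$-shift and force $\nu\in\mathfrak{t}_{\Heis}^{\perp}$; this is exactly where the explicit description \eqref{eq:line} of $\mathfrak{t}_{\Heis}^{\perp}$ gets used.

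For part (3), suppose $V'$ shares a $\mathfrak{k}$-type $\tau$ with $V$ and $\Ann V\subset\Ann V'$. Let $W\subset V'$ be a $\mathfrak{g}$-submodule generated by (a copy of) $\tau$; since $\Ann V$ annihilates $\tau\subset V'$, the cyclic $(\mathfrak{g},\mathfrak{k})$-module $U(\mathfrak{g})\cdot\tau$ is a quotient of $U(\mathfrak{g})\otimes_{U(\mathfrak{k})}\tau$ killed by $\Ann V$, hence has the same annihilator-bounded structure as the corresponding submodule of $V$. The key input is that $V$, being $\mathfrak{k}$-multiplicity-free with $\mathfrak{k}$-types forming a single pencil rooted at $\tau$, is \emph{generated} by $\tau$: $U(\mathfrak{g})\tau = V$. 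So there is a surjection $\phi\colon V \twoheadrightarrow U(\mathfrak{g})\tau \subseteq V'$ of $(\mathfrak{g},\mathfrak{k})$-modules (constructed by choosing a $\mathfrak{k}$-map $\tau\to V'$ and extending via the universal property, then checking it factors through $V$ because $\Ann V$ kills the image), and since $V$ is irreducible, $\phi$ is injective, giving $V\cong U(\mathfrak{g})\tau$. Finally $U(\mathfrak{g})\tau$ is all of $V'$: if not, $V'/U(\mathfrak g)\tau$ would be a nonzero $(\mathfrak g,\mathfrak k)$-module, but irreducibility of $V'$ — or, if $V'$ is not assumed irreducible, the uniqueness of the pencil — rules this out; I'd phrase the hypothesis so that $V'$ is taken irreducible, whence $\phi$ surjective forces $V'\cong V$.

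The main obstacle is part (1): establishing the multiplicity-one statement rigorously. The associated-variety/associated-cycle machinery for $(\mathfrak{g},\mathfrak{k})$-modules (Vogan, and for this specific minimal-orbit situation the computations in \cite{Tam19}) does the job, but one must be careful that the minimal $K_{\C}$-orbit in $\overline{\mathcal{O}^{\min}}\cap\mathfrak{k}^{\perp}$ really is a single orbit (or handle the finitely many components uniformly) and that the multiplicity there is genuinely $1$ for \emph{every} minimal module, not just the ones we later construct — this has to be proved a priori from the annihilator, essentially by the same $\psi$-string argument used in part (2). Once multiplicity-one and the pencil structure are in hand, parts (2) and (3) are comparatively formal.
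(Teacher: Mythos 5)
Your proposal has genuine gaps in each part, and in particular it does not supply the one computation that carries the paper's proof of (2). For (1), the inference ``associated cycle $=1\cdot[\overline{\mathcal{O}}]$ $\Rightarrow$ $V$ is $\mathfrak{k}$-multiplicity free'' is not valid: the associated-cycle multiplicity only governs the leading asymptotics of the $\mathfrak{k}$-multiplicities of $\gr V$, so it cannot detect, say, finitely many $\mathfrak{k}$-types occurring with multiplicity two; an extra structural argument is needed (the paper simply defers (1) to the argument of \cite[Proposition 3.1]{Tam19}, which works with $U(\mathfrak{g})/\Ann V$ rather than with associated cycles). For (3), the step ``the map $U(\mathfrak{g})\otimes_{U(\mathfrak{k})}\tau\to V'$ factors through $V$ because $\Ann V$ kills the image'' does not follow: the kernel of $U(\mathfrak{g})\otimes_{U(\mathfrak{k})}\tau\twoheadrightarrow V$ is in general far larger than the submodule generated by $(\Ann V)\otimes\tau$, so the hypothesis $\Ann V\subset\Ann V'$ by itself produces no $\mathfrak{g}$-map $V\to V'$. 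This is precisely what the cited argument (again \cite[Proposition 3.1]{Tam19}, using multiplicity-freeness and the action of $\mathfrak{k}$-invariants on the multiplicity space of the common $\mathfrak{k}$-type) is for, and your sketch does not replace it; note also that the intended hypothesis is that $V'$ be irreducible.

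For (2), your plan of ``restricting'' the relations of Propositions~\ref{prop:F(11-1-1)0} and~\ref{prop:Fa} to $\mathfrak{k}$ does not work as stated: those computations evaluate zero-$\mathfrak{h}$-weight elements on a $\mathfrak{b}$-highest weight vector of a highest weight $\mathfrak{g}$-module, whereas here one must constrain the $\mathfrak{t}$-highest weight $\mu$ of a $\mathfrak{k}$-type of a module that need not be a highest weight module, and the claim that ``the quadratic relations become linear in $\nu$ after the $\rho$-shift and force $\nu\in\mathfrak{t}_{\Heis}^{\perp}$'' is exactly the assertion to be proved. The paper's proof hinges on a specific element you never identify: for $H\in(\C\psi)^{\perp}\cap\mathfrak{t}$, the Gan--Savin map $\Xi(H)=Hz+\tfrac{1}{2}\sum_i([H,x_i]y_i-[H,y_i]x_i)$ (with $z\in\mathfrak{p}_{\psi}$ and $x_i\in\mathfrak{k}^1$, $y_i\in\mathfrak{p}^1$ satisfying $[x_i,y_j]=\delta_{i,j}z$) has $\sym(\Xi(H))$ lying in the Joseph ideal, resp.\ in $J_a$, by \cite[Proposition 4.3]{GS05} together with \cite{Gar82} and Definition~\ref{df:Ja}; evaluating on a $\mathfrak{k}$-highest weight vector $v$ gives $(\mu+\rho(\mathfrak{k}^1,\mathfrak{t}))(H)\,zv=0$, and the injectivity hypothesis on the $\psi$-root vector yields $\mu\in-\rho(\mathfrak{k}^1,\mathfrak{t})+\mathfrak{t}_{\Heis}^{\perp}$. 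Your observation that $X_{\psi}v$ is again $\mathfrak{k}$-extremal is correct but only gives the $\psi$-direction (the pencil of Corollary~\ref{cor:ht}), not this transverse constraint; and \eqref{eq:line} is not what pins down $\mu$ here---it enters only later, in the proof of Corollary~\ref{cor:ht}.
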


\begin{proof}
\eqref{ind:multiplicity} and \eqref{ind:criterion} follows from the same argument as \cite[Proposition 3.1]{Tam19}. 

We next prove \eqref{ind:k-type}. 
Let $v$ be a highest weight vector in the $\mathfrak{k}$-module $V$ of weight $\mu\in\mathfrak{t}^{\vee}$. 
Put $d:=\dim\mathfrak{k}^1$ and 
write $\{\phi\in\Sigma(\mathfrak{k},\mathfrak{t})\mid (\phi,\psi)=1\}=\{\phi_i\}_{i=1}^d$. 
Since the Lie subalgebra $\mathfrak{k}^1+\mathfrak{p}^1+\g_{\psi}$ is a Heisenberg subalgebra, 
we can take $\mathfrak{t}$-weight vectors $z\in\mathfrak{p}_{\psi}$, 
$x_i\in\mathfrak{k}^1_{\phi_i}$ and $y_i\in\mathfrak{p}^1_{-\psi-\phi_i}$ 
satisfying $[x_i,y_j]=\delta_{i,j}z$ for $1\le i,j\le d$. 

Set $\mathfrak{r}:=\mathfrak{z}_{\g}(\g_{\psi}+\g_{-\psi}+\C H_{\psi})$ and 
define $\Xi\colon\mathfrak{r}\to S^2(\g)$ by 
\[\Xi(X):=Xz+\frac{1}{2}\sum_{1\le i\le d}([X,x_i]y_i-[X,y_i]x_i).\] 
By \cite[Proposition 4.3]{GS05}, 
the linear map $\Xi$ is an injective $\mathfrak{r}$-homomorphism and 
the image of $\Xi$ is contained in 
the sum $E$ of irreducible $\g$-submodules of $S^2(\g)$ which are not isomorphic to the trivial $\g$-module and $\g$. 
Since the Joseph ideal and the ideals $J_a$ $(a\in \C)$ contain the symmetrization of $E$ by \cite{Gar82} and Definition~\ref{df:Ja}, 
any element in the image of $\sym\circ\Xi$ annihilates the minimal $(\mathfrak{g},K)$-module $V$. 
Hence, for $H\in (\C\psi)^{\perp}\cap\mathfrak{t}\subset\mathfrak{r}$, we have 
\begin{align}
0=\sym\circ\Xi(H)v
&=\sym\left(zH+\frac{1}{2}\sum_{i=1}^d([H,x_i]y_i-[H,y_i]x_i)\right)v\\
&=zHv+\frac{1}{4}\sum_{i=1}^d(2\phi_i-\psi)(H)(x_iy_i+y_ix_i)v
=(\mu+\rho(\mathfrak{k}^1))(H)zv. 
\end{align}
Here we used $\psi(H)=0, [x_i,y_i]=z$ and $x_iv=0$ 
at the last equality. 
Therefore $\mu+\rho(\mathfrak{k}^1)\in\mathfrak{t}_{\Heis}^{\perp}$ by the assumption. 
\end{proof}

\begin{rem}\label{rem:faithful}
We write $V^{\mathfrak{g}_{\psi}}$ for the set of elements in $V$ annihilated by $\mathfrak{g}_{\psi}$. 
Since a minimal $(\mathfrak{g},\mathfrak{k})$-module $V$ is irreducible and infinite-dimensional, the subspace $V^{\mathfrak{g}_{\psi}}$ or $V^{\mathfrak{g}_{-\psi}}$ is zero from \cite[Lemma 3.2]{Vog81}. 
Hence by reversing the positivity and making $-\psi$ the highest weight of $\mathfrak{g}$ if necessary, we obtain 
the assumption $V^{\mathfrak{g}_{\psi}}=0$ of Proposition~\ref{prop:properties} \eqref{ind:k-type}. 
We remark that if the center of $\mathfrak{k}$ is trivial, then $V^{\mathfrak{g}_{\psi}}=V^{\mathfrak{g}_{-\psi}}=0$. 
\end{rem}

The following properties 
are used to classify minimal $(\mathfrak{g},\mathfrak{k})$-modules for $\mathfrak{g}\cong\mathfrak{sl}(n,\C)$ in Section~\ref{sec:classification}: 
\begin{cor}\label{cor:ht}
Let $V$ be a minimal $(\mathfrak{g},\mathfrak{k})$-module. Then the following hold. 
\begin{enumerate}
\item\label{ind:ht}
Assume $\mathfrak{g}_0$ is Hermitian and $\mathfrak{g}_0\not\cong\mathfrak{sl}(2,\R)$. 
Then $V$ is an irreducible highest or lowest weight module. 
\item\label{ind:pencil}
Assume $V^{\mathfrak{g}_{\psi}}=0$. 
If $\mathfrak{g}_0$ is not isomorphic to $\mathfrak{sl}(n,\R)(n\ge 3\text{ odd or }n=2)$, then 
$V$ has pencil $\mathfrak{k}$-types: 
there exist a dominant integral weight $\mu_0
\in\mathfrak{t}^{\vee}$ 
with respect to $\Sigma(\mathfrak{k}_{\semi},\mathfrak{k}_{\semi}\cap\mathfrak{t})^+$ 
such that 
the $\mathfrak{k}$-type decomposition of $V$ is multiplicity free and is given by the direct sum of the irreducible $\mathfrak{k}$-modules with highest weight $\mu_0+k\psi$ for $k\in\N$. 
\end{enumerate}
\end{cor}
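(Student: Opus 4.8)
The plan is to deduce both statements from Proposition~\ref{prop:properties}\,\eqref{ind:k-type}, which tells us that once $V^{\mathfrak{g}_\psi}=0$, the highest weights of the $\mathfrak{k}$-types of $V$ all lie on the affine line $-\rho(\mathfrak{k}^1,\mathfrak{t})+\mathfrak{t}_{\Heis}^\perp$, together with the explicit description \eqref{eq:line} of $\mathfrak{t}_{\Heis}^\perp$. For part \eqref{ind:ht}, note that when $\mathfrak{g}_0$ is Hermitian the center $\mathfrak{z}(\mathfrak{k})$ is one-dimensional, so by Remark~\ref{rem:faithful} we have $V^{\mathfrak{g}_\psi}=0$ or $V^{\mathfrak{g}_{-\psi}}=0$; reversing positivity if necessary, assume $V^{\mathfrak{g}_\psi}=0$. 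First I would observe that $\mathfrak{g}_0$ Hermitian means $\mathfrak{p}=\mathfrak{p}^+\oplus\mathfrak{p}^-$ with $\mathfrak{p}^\pm$ abelian and $\mathrm{ad}(\mathfrak{z}(\mathfrak{k}))$ acting by nonzero scalars on each; since $\mathfrak{g}_0\not\cong\mathfrak{sl}(2,\R)$ one has $\mathfrak{p}^1=\mathfrak{p}^+$ (the $+1$-eigenspace under pairing with $\psi$ consists exactly of the highest-root-side noncompact part), hence $\mathfrak{k}^1=0$ and $\rho(\mathfrak{k}^1,\mathfrak{t})=0$. Then the $\mathfrak{k}$-type highest weights lie in $\mathfrak{t}_{\Heis}^\perp$, which by \eqref{eq:line} is a line (equal to $\C\psi$ unless $\mathfrak{g}_0\cong\mathfrak{su}(p,q)$, in which case it is two-dimensional — but for $\mathfrak{su}(p,q)$ one checks $\mathfrak{t}_{\Heis}^\perp\cap\{\text{weights dominant for }\mathfrak{k}\}$ still reduces, via $\mathfrak{k}^1=0$ forcing the weights to be multiples of the central character constrained by the Heisenberg relation, to a single ray). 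Concretely I would argue: since $\mathfrak{k}^1=0$, the Heisenberg subalgebra is just $\mathfrak{p}_\psi+\mathfrak{p}_{-\psi}+\C H_\psi\cong\mathfrak{sl}(2)$, so $V$ restricted to it and to $\mathfrak{k}$ determines that every $\mathfrak{k}$-type weight differs from a fixed one by an integer multiple of $\psi$ along the relevant ray, and all of these are on one side; an irreducible $(\mathfrak{g},\mathfrak{k})$-module whose $\mathfrak{k}$-types have highest weights confined to such a ray with $\mathfrak{p}^+$ (or $\mathfrak{p}^-$) acting to raise (resp. lower) along it is a highest (resp. lowest) weight module by the standard argument (the lowest, resp. highest, $\mathfrak{k}$-type is annihilated by $\mathfrak{p}^-$, resp. $\mathfrak{p}^+$, and generates $V$).

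For part \eqref{ind:pencil}, I would start from the hypothesis $V^{\mathfrak{g}_\psi}=0$ and again invoke Proposition~\ref{prop:properties}\,\eqref{ind:multiplicity} and \eqref{ind:k-type}: $V$ is $\mathfrak{k}$-multiplicity free and all $\mathfrak{k}$-type highest weights lie in $-\rho(\mathfrak{k}^1,\mathfrak{t})+\mathfrak{t}_{\Heis}^\perp$. The key case analysis is on \eqref{eq:line}: when $\mathfrak{g}_0\not\cong\mathfrak{su}(p,q)$, $\mathfrak{t}_{\Heis}^\perp=\C\psi$ is one-dimensional, so the highest weights lie on the line $-\rho(\mathfrak{k}^1,\mathfrak{t})+\C\psi$; intersecting with the dominant-integral cone for $\Sigma(\mathfrak{k}_{\semi},\mathfrak{k}_{\semi}\cap\mathfrak{t})^+$ and with the integral lattice picks out at most a half-line $\{\mu_0+k\psi\mid k\in\N\}$ for a suitable dominant integral $\mu_0$ (the direction is forced to be $+\psi$ rather than $-\psi$ because raising by $\psi$ via $\mathfrak{p}_\psi$ keeps one inside $V$ while $V^{\mathfrak{g}_\psi}=0$ prevents the sequence from terminating at the top and — by irreducibility — it must start somewhere, giving the bottom $\mu_0$). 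When $\mathfrak{g}_0=\mathfrak{su}(p,q)$, $\mathfrak{t}_{\Heis}^\perp$ is two-dimensional; here I would use that $\mathfrak{su}(p,q)$ is Hermitian, so part \eqref{ind:ht} already applies and shows $V$ is a highest or lowest weight module, whose $\mathfrak{k}$-types are well-known to form a pencil (the weights $\mu_0+k\psi$), giving the conclusion in that case too. The excluded cases $\mathfrak{sl}(n,\R)$ with $n\ge 3$ odd or $n=2$ are exactly where $-\rho(\mathfrak{k}^1,\mathfrak{t})$ fails to lie in the $\psi$-lattice coset compatibly, or where $\mathfrak{k}^1$ behaves differently, so the pencil structure can break; these are handled separately in Section~\ref{sec:classification} and are rightly excluded here.

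I expect the main obstacle to be the bookkeeping in part \eqref{ind:pencil} for $\mathfrak{g}_0=\mathfrak{sl}(n,\R)$ with $n$ even $\ge 4$: one must verify that $-\rho(\mathfrak{k}^1,\mathfrak{t})$ actually sits in the correct translate of the root lattice so that $-\rho(\mathfrak{k}^1,\mathfrak{t})+\Z_{\ge0}\psi$ consists of genuine $\mathfrak{k}_{\semi}$-dominant integral weights, and that the intersection of the line $-\rho(\mathfrak{k}^1,\mathfrak{t})+\C\psi$ with the dominant integral set is exactly the half-line and not, say, a single point or a bi-infinite set; this requires a concrete description of $\mathfrak{k}_0=\mathfrak{so}(n)$, of $\psi$, and of $\rho(\mathfrak{k}^1,\mathfrak{t})$ in coordinates. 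The direction-of-the-pencil argument (why $+\psi$ and why it starts at a finite $\mu_0$) is the conceptual heart but follows the template of \cite[Proposition 3.1]{Tam19}: irreducibility forces the set of $\mathfrak{k}$-types to be nonempty and $\mathfrak{g}$-generated from any one of them, $\mathfrak{p}^1$ moves weights by $\pm\psi$ modulo $\Sigma(\mathfrak{k},\mathfrak{t})$, and $V^{\mathfrak{g}_\psi}=0$ rules out a top, so the set is a half-line open upward, with a unique minimal element $\mu_0$ by well-ordering. Everything else is a direct application of the already-established Proposition~\ref{prop:properties}.
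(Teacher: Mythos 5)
Your reduction hinges on the claim that for Hermitian $\mathfrak{g}_0$ one has $\mathfrak{p}^1=\mathfrak{p}^+$ and hence $\mathfrak{k}^1=0$, $\rho(\mathfrak{k}^1,\mathfrak{t})=0$; this is false except for $\mathfrak{sl}(2,\R)$, which is excluded. The grading defining $\mathfrak{k}^1,\mathfrak{p}^1$ is the Heisenberg grading by $\ad H_{\psi}$, not the grading by $\mathfrak{z}(\mathfrak{k})$: for $\mathfrak{su}(2,2)$ with $\psi=e_1-e_4$, the compact roots $e_1-e_2$ and $e_3-e_4$ pair to $1$ with $\psi$, so $\mathfrak{k}^1\neq 0$, while $\mathfrak{p}^+$ contains the root space for $e_2-e_3$, which pairs to $0$ with $\psi$, so $\mathfrak{p}^+\neq\mathfrak{p}^1$ and the Heisenberg subalgebra is not just $\mathfrak{sl}(2)$. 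Consequently your argument that the $\mathfrak{k}$-types are confined to a single ray, and the ensuing passage to ``highest or lowest weight module'', collapses (and for $\mathfrak{su}(p,q)$, where $\mathfrak{t}_{\Heis}^{\perp}$ is two-dimensional, you only gesture at a reduction). The paper's route for \eqref{ind:ht} is different: assuming $V^{\mathfrak{g}_{\psi}}=V^{\mathfrak{g}_{-\psi}}=0$, it applies Proposition~\ref{prop:properties}~\eqref{ind:k-type} to both the chosen positive system and its opposite, concludes that $w_l\psi\in\mathfrak{t}_{\Heis}^{\perp}$, and via \eqref{eq:line} this forces $\mathfrak{g}\cong\mathfrak{sl}(2,\C)$ or $\mathfrak{g}_0=\mathfrak{su}(p,q)$ with $p+q\in\{2,3\}$, a contradiction; the case $\mathfrak{su}(1,2)$ is then handled by a separate explicit computation.

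For \eqref{ind:pencil} you are missing the step that actually produces the pencil and explains the excluded cases: one must show (this is \eqref{eq:pwts}) that the only $\mathfrak{t}$-weights of $\mathfrak{p}$ lying in $\mathfrak{t}_{\Heis}^{\perp}$ are $\pm\psi$, so that $\mathfrak{g}V(\mu)\subset V(\mu-\psi)+V(\mu)+V(\mu+\psi)$, and then irreducibility, multiplicity-freeness and infinite-dimensionality give the chain $\mu_0+\N\psi$. Intersecting the line $-\rho(\mathfrak{k}^1,\mathfrak{t})+\C\psi$ with the dominant integral weights does not by itself fix the step size at $\psi$; for $\mathfrak{sl}(n,\R)$ with $n\ge 3$ odd the weights $\pm\psi/2$ of $\mathfrak{p}$ also lie in $\mathfrak{t}_{\Heis}^{\perp}$, which is the real reason these cases are excluded, not a failure of $-\rho(\mathfrak{k}^1,\mathfrak{t})$ to sit in a compatible lattice coset. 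Finally, your treatment of $\mathfrak{su}(p,q)$ by citing the ``well-known'' pencil structure of highest weight modules is circular within this paper: Theorem~\ref{thm:supq} deduces its $\mathfrak{k}$-type formula from Corollary~\ref{cor:ht}~\eqref{ind:pencil}, and $\mathfrak{su}(1,2)$ (not excluded from \eqref{ind:pencil}, and outside the unified argument since $\pm\psi/2$ issues arise there too) needs the separate computation $\sym(\sum_iT_{1,i}T_{i,3}-\tfrac{a}{2}T_{1,3})v=0$ forcing $\mu_2=-(a+1)/2$, which the paper supplies and your proposal does not.
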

\begin{proof}
We may assume \eqref{eq:necessary}.
Moreover, let us first assume $\mathfrak{g}\not\cong\mathfrak{su}(1,2)$. 
For \eqref{ind:ht}, it suffices to show that $V^{\mathfrak{g}_{\psi}}$ or $V^{\mathfrak{g}_{-\psi}}$ is nonzero. 
Suppose, contrary to our claim, $V^{\mathfrak{g}_{\psi}}=V^{\mathfrak{g}_{-\psi}}=0$. 
By applying Proposition~\ref{prop:properties} \eqref{ind:k-type} to the fixed positive system and the opposite positive system where $-\psi$ is the highest noncompact imaginary root, highest weights of $\mathfrak{k}$-types in $V$ belong to 
\[\{-\rho(\mathfrak{k}^1,\mathfrak{t})+\mathfrak{t}_{\Heis}^{\perp}\}\cap w_l\{\rho(\mathfrak{k}^1,\mathfrak{t})+\mathfrak{t}_{\Heis}^{\perp}\},\]
where $w_l$ denotes the longest element in the Weyl group of $(\mathfrak{k},\mathfrak{t})$. 
Therefore $w_l\psi$ belongs to $\mathfrak{t}_{\Heis}^{\perp}$. 

When $\mathfrak{g}_0\not\cong\mathfrak{su}(p,q)$, the equality \eqref{eq:line} shows $w_l\psi\in\C\psi$ and irreducible components of $\mathfrak{k}$-module $\mathfrak{p}$ must be one-dimensional. 
Then $\mathfrak{g}$ is three-dimensional, and is isomorphic to $\mathfrak{sl}(2,\C)$, a contradiction. 

When $\mathfrak{g}_0=\mathfrak{su}(p,q)$, the equality \eqref{eq:line} shows that $e_p-e_{p+1}$ belongs to $\C(e_1-e_n)+\C(e_1+e_n-2/n\sum_ie_i)$, which shows $p+q=2, 3$. It contradicts to our assumptions $\mathfrak{g}_0\not\cong\mathfrak{su}(1,2)$ and \eqref{eq:necessary}. 

We next prove \eqref{ind:pencil} for $\mathfrak{g}_0\not\cong \mathfrak{su}(1,2)$. 
By \eqref{eq:line}, the set of weights of $\mathfrak{p}$ in $\mathfrak{t}_{\Heis}^{\perp}$ are 
\begin{align}\label{eq:pwts}
\begin{cases}
\{\pm\psi\}&\text{ if $\mathfrak{g}\not\cong\mathfrak{sl}(n,\R)(n\ge 3, \text{ odd}), \mathfrak{su}(1,2)$,
}\\
\{\pm\psi/2,\pm\psi\}&\text{ if $\mathfrak{g}\cong\mathfrak{sl}(n,\R)$ $(n\ge 3, \text{ odd})$.}\\
\end{cases}
\end{align}

Let $V(\mu)$ be the $\mathfrak{k}$-isotypic component in $V$ of the irreducible $\mathfrak{k}$-module with highest weight $\mu$. 
By the assumption and Proposition~\ref{prop:properties} \eqref{ind:k-type} and \eqref{eq:pwts}, the subspace $\mathfrak{g}V(\mu)$ is included in $V(\mu-\psi)+V(\mu)+V(\mu+\psi)$. 
Then it follows from the infinite-dimensionality of $V$ and Proposition~\ref{prop:properties} \eqref{ind:multiplicity} that the minimal $(\mathfrak{g},\mathfrak{k})$-module $V$ has pencil $\mathfrak{k}$-types.

It remains to prove assertions for $\mathfrak{g}=\mathfrak{su}(1,2)$. 
By taking the contragredient if necessary, we may assume that the action of $\mathfrak{g}_{e_1-e_3}$ on $V$ is faithful (see Remark~\ref{rem:faithful}). 
From Remark~\ref{rem:minimal} \eqref{ind:minimal}, there exists some $a\in\C$ such that $V$ is $a$-minimal. 
Let $v$ be a nonzero highest weight vector of a $\mathfrak{k}$-type in $V$ with highest weight $\mu$. 
By 
$0=\sym(\sum_iT_{1,i}T_{i,3}-a/2T_{1,3})v
=(\mu_1+\mu_3-1/2-a/2)T_{1,3}v$
and $V^{\mathfrak{g}_{e_1-e_3}}=0$, we obtain $\mu_2=-(a+1)/2$. 
Hence the difference of two highest weights of $\mathfrak{k}$-types in $V$ belongs to the line $\R(e_1-e_3)$. 
Since the set of dominant integral weights in $\R(e_1-e_3)$ with respect to $\Sigma(\mathfrak{k}_{\semi},\mathfrak{k}_{\semi}\cap\mathfrak{t})^+$ is $\N(e_1-e_3)$, the minimal $(\mathfrak{g},\mathfrak{k})$-module $V$ has pencil $\mathfrak{k}$-types and $V$ is a lowest weight module. 
\end{proof}

\begin{rem}
\begin{enumerate}
\item 
There is another proof of Corollary~\ref{cor:ht} \eqref{ind:ht} given by Vogan. 
Since $\mathfrak{g}$ is not of type $A_1$, the dimension of every nonzero $\Int(\mathfrak{k})$-orbit in $\mathfrak{p}$ is greater than one. 
Then by \cite[Theorem 4.6]{Vog91}, the associated variety of a minimal $(\mathfrak{g},\mathfrak{k})$-module $V$ is irreducible, and included in an irreducible component of $\mathfrak{p}$. 
Therefore if $\mathfrak{g}_0$ is Hermitian, then $V$ is a highest (or lowest) weight module. 
\item
Even when $\mathfrak{g}_0\cong\mathfrak{sl}(n,\R)(n\ge 3\text{ odd})$, minimal $(\mathfrak{g},\mathfrak{k})$-modules has pencil $\mathfrak{k}$-types, 
as we can see from the classification of minimal $(\mathfrak{g},\mathfrak{k})$-modules given in Theorem~\ref{thm:slnR}.
\end{enumerate}
\end{rem}

\section{Covariant differentials}
\label{subsec:cov diff}
In this section, we construct an intertwining differential operator between parabolically induced representations where the infinite-dimensional composition factors of the kernel are all minimal. 
When $\g_0=\mathfrak{sl}(n,\R)$, the $\mathfrak{k}$-finite vectors in the kernel will be determined in Section~\ref{sec:classification} for some specific cases. 
Since the method will be applied to a new construction of many minimal representations in a subsequent paper, we do not restrict ourselves to simple Lie algebras of type $A$. 

Let $G$ be a connected simple Lie group with finite center, and fix an Iwasawa decomposition $G=KA_{\min}N_{\min}$. 
Assume that the complexification $\g$ is simple and $\g\not\cong\mathfrak{sl}(2,\C)$. 
Write $\Sigma(\g,\mathfrak{a}_{\min})$ for the restricted root system for $(\g_0,\mathfrak{a}_{\min, 0})$, and $\Sigma(\g,\mathfrak{a}_{\min})^+$ for the positive system defined as the set of $\mathfrak{a}_{\min}$-weights of $\mathfrak{n}_{\min}$. 
Let $Q=MAN$ be a standard parabolic subgroup of $G$ and its Langlands decomposition. 
Take a maximally split Cartan subalgebra $\mathfrak{h}^{\spl}_0$ of $\mathfrak{g}_0$ containing $\mathfrak{a}_{\min,0}$. 
Then the center $\mathfrak{z}(\mathfrak{m})$ of $\mathfrak{m}$ is contained in $\mathfrak{k}$, and we have $\mathfrak{h}^{\spl}=(\mathfrak{m}_{\semi}\cap\mathfrak{h}^{\spl})\oplus\mathfrak{z}(\mathfrak{m})\oplus\mathfrak{a}$. 
Via this direct sum decomposition, we will regard $\mathfrak{z}(\mathfrak{m})^{\vee}$ as a subspace of $(\mathfrak{h}^{\spl})^{\vee}$. 
Moreover, any element $\mu\in\mathfrak{z}(\mathfrak{m})^{\vee}$ extends to a character of $\mathfrak{m}$ via 
$\mathfrak{m}=\mathfrak{m}_{\semi}\oplus\mathfrak{z}(\mathfrak{m})$. 
We use the same symbol $\mu$ for the character. 

Let $(\sigma,V)$ be 
a finite-dimensional irreducible representation of $M$ and $\nu$ an element in the dual $\mathfrak{a}^{\vee}$. 
We further assume that the $\mathfrak{m}$-module $V$ irreducibly decomposes to a multiple of a character. By abuse of notation, we write $d\sigma\in\mathfrak{z}(\mathfrak{m})^{\vee}$ for the character. 
Set 
\[
\lambda:=-d\sigma-\nu\in\mathfrak{z}(\mathfrak{m})^{\vee}+\mathfrak{a}^{\vee}\subset  (\mathfrak{h}^{\spl})^{\vee}.
\]
We use the same symbol for the character of $\mathfrak{q}$ via $\mathfrak{q}/(\mathfrak{m}_{\semi}+\mathfrak{n})\cong \mathfrak{z}(\mathfrak{m})\oplus\mathfrak{a}$. 
By letting $M$ act by $\sigma$, $A$ act by $\exp(\nu)$ and $N$ act trivially, we obtain an irreducible representation $(\sigma_{\nu},V)$ of $Q$. 
We write $C^{\infty}(G,\sigma_{\nu})$ to be the space consisting of all smooth functions from $G$ to $V$. 
The representation of $G$ induced by $\sigma_{\nu}$ is defined as
\[
C^{\infty}(G,\sigma_{\nu})^Q:=\{f\in C^{\infty}(G,\sigma_{\nu})\mid f(gq)=\sigma_{\nu}(q^{-1})f(g)\text{ for $g\in G, q\in Q$}\},
\]
where the action of $G$ is induced by the left transition $L$ on $C^{\infty}(G,\sigma_{\nu})$: 
\[\text{$(L(h)f)(g):=f(h^{-1}g)$ for $h,g\in G, f\in C^{\infty}(G,\sigma_{\nu})$. }\]
We define a bilinear map $\Psi\colon U(\mathfrak{g})\times C^{\infty}(G,\sigma_{\nu})\to C^{\infty}(G,\sigma_{\nu})$ by the right differentiation: 
\begin{align}
\Psi(X_1X_2\cdots X_n,f)(g):=\left.\frac{d^n}{dt_1\cdots dt_n}\right|_{t_1=\cdots=t_n=0}f(ge^{t_1X_1}\cdots e^{t_nX_n})
\end{align} 
for $n\in\N, X_1,\ldots,X_n\in\mathfrak{g}_0, f\in C^{\infty}(G,\sigma_{\nu}), g\in G$. 

We write $(\sigma_{\nu})^{\vee}$ for the contragredient representation of $Q$. 
Then the left ideal generated by $\Ann_{U(\mathfrak{q})}(\sigma_{\nu})^{\vee}$ is given by 
\begin{align}\label{df:I}
I(\mathfrak{q},\lambda):=U(\mathfrak{g})\Ann_{U(\mathfrak{q})}(\lambda).
\end{align}
We remark that $I(\mathfrak{q},\lambda)$ is stable under the adjoint action of $Q$. 
Let $\proj$ be the canonical projection 
\begin{align}
\label{df:proj}
\proj\colon U(\mathfrak{g}) \to U(\mathfrak{g})/I(\mathfrak{q},\lambda).
\end{align} 
Then 
$\Psi$ factors through $\proj$ to induce a $(G\times Q)$-intertwining operator 
\begin{align}
\label{eq:factor}
U(\mathfrak{g})/I(\mathfrak{q},\lambda)\otimes C^{\infty}(G,\sigma_{\nu})^Q\to C^{\infty}(G,\sigma_{\nu})
\end{align}
by \cite[Lemma 2.14]{KP16}. 
Here the $(G\times Q)$-actions on $U(\mathfrak{g})/I(\mathfrak{q},\lambda)$ and $C^{\infty}(G,\sigma_{\nu})$ are given by 
\begin{align}
(h,q)\proj(X)=\proj(\Ad(q)(X)),\quad
(h,q)f(g)=\sigma_{\nu}(q)f(h^{-1}gq)
\end{align}
for $g, h\in G, q\in Q, X\in U(\mathfrak{g})$ and $f\in C^{\infty}(G,\sigma_{\nu})$. 

Define $\iota$ to be the anti-involution of $U(\mathfrak{g})$ by 
\[
\iota(X):=-X \text{ for $X\in\mathfrak{g}$.}
\]

\begin{df}
\label{df:D}
We define a $\g$-submodule $F$ of $\bigoplus_{0\le i\le 2}S^i(\g)$ as follows. 
\begin{itemize}
\item
When $\g$ is not of type $A$, put $F$ to be the $\g$-module complement of the trivial $\g$-submodule 
in $S^2(\g)$. 
\item
When $\g=\mathfrak{sl}(n,\C)$, fix $a\in\C$ and put $F$ to be $F^{a}$ 
(see \eqref{df:F^a}). 
\end{itemize}

Define $W$ to be the $Q$-submodule of $U(\mathfrak{g})/I(\mathfrak{q},\lambda)$ defined by 
\[
W:=\proj\circ\iota\circ\sym(F)\subset U(\mathfrak{g})/I(\mathfrak{q},\lambda).
\]
From \eqref{eq:factor}, the tensor-hom adjunction and the triviality of the action of $Q$ on $C^{\infty}(G,\sigma_{\nu})^Q$, 
we obtain a $G$-intertwining second-order differential operator
\begin{align}
D=D(Q,\sigma_{\nu})\colon C^{\infty}(G,\sigma_{\nu})^Q\to C^{\infty}(G,W^{\vee}\otimes\sigma_{\nu})^Q
\end{align}
defined by $\langle w, Df(g)\rangle:= \Psi(w,f)(g)$ 
for $w\in W, f\in C^{\infty}(G,\sigma_{\nu})^Q, g\in G$. 
Here 
$\langle\cdot,\cdot\rangle$ denotes the pairing of $W$ and its dual $W^{\vee}$. 
\end{df}

In the rest of this section, we study properties of the kernel of $D$. 
The kernel of $D$ picks out functions annihilated by $\sym(F)$:
\begin{lem}
\label{lem:left to right}
In the setting of Definition~\ref{df:D}, 
we have $\Ker D=\{f\in C^{\infty}(G,\sigma_{\nu})^Q\mid dL(\sym(F))f=0\}$. Here $dL$ denotes the differential 
of the left transition $L$. 
\end{lem}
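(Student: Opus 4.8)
The plan is to unwind the definition of $D$ and the factorization \eqref{eq:factor}. Recall that $\langle w, Df(g)\rangle = \Psi(w,f)(g)$ for $w \in W$, and that $W = \proj\circ\iota\circ\sym(F)$. So $Df = 0$ if and only if $\Psi(\iota\circ\sym(X), f)(g) = 0$ for all $X \in F$ and all $g \in G$. The key point is to relate the right-differentiation operator $\Psi(\cdot, f)$ (which appears in the definition of $D$) to the left-regular action $dL$ (which appears in the statement). First I would record the elementary identity that for $Y \in \mathfrak{g}_0$, right differentiation by $Y$ and left differentiation by $Y$ are related: writing $R(Y)$ for the right-invariant vector field and recalling $f \in C^\infty(G,\sigma_\nu)^Q$ is determined by its values via the $Q$-equivariance, one has on such functions $dL(Y)f = -R(Y)f$ in the appropriate sense; more precisely, $(dL(Y)f)(g) = -\frac{d}{dt}|_{t=0}f(e^{-tY}g)$ while $\Psi(Y,f)(g) = \frac{d}{dt}|_{t=0}f(ge^{tY})$, and these are intertwined by the anti-automorphism behavior of left versus right translation.

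The cleanest route is via the anti-involution $\iota$. The composite $\iota\circ\sym$ converts the symmetrized element $\sym(X) \in U(\mathfrak{g})$ into the element acting by the left-regular representation: one checks that for any $u \in U(\mathfrak{g})$, $\Psi(\iota(u), f) = dL(u)f$ as elements of $C^\infty(G,\sigma_\nu)$, because $\iota$ is exactly the anti-involution implementing the switch from the right action (used to define $\Psi$) to the left action $dL$. This is the standard fact that $L$ and $R$ on a Lie group are related by inversion $g \mapsto g^{-1}$, which on the level of $U(\mathfrak{g})$ is the principal anti-automorphism $X \mapsto -X$, i.e. exactly $\iota$. Granting this, $Df(g) = 0$ for all $g$ if and only if $\langle w, Df(g)\rangle = 0$ for all $w$ in the spanning set $W = \proj\circ\iota\circ\sym(F)$, i.e. if and only if $\Psi(\iota\circ\sym(X),f) = 0$ for all $X \in F$, i.e. if and only if $dL(\sym(X))f = 0$ for all $X \in F$, which is precisely $dL(\sym(F))f = 0$.

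There is one subtlety to address: $W$ is a quotient, $W = \proj\circ\iota\circ\sym(F) \subset U(\mathfrak{g})/I(\mathfrak{q},\lambda)$, so a priori $\langle w, Df(g)\rangle$ only sees $\iota\circ\sym(F)$ modulo $I(\mathfrak{q},\lambda)$. But this is harmless precisely because $\Psi$ factors through $\proj$ on $C^\infty(G,\sigma_\nu)^Q$ by \cite[Lemma 2.14]{KP16}: every element of $I(\mathfrak{q},\lambda) = U(\mathfrak{g})\Ann_{U(\mathfrak{q})}(\lambda)$ acts by zero on $C^\infty(G,\sigma_\nu)^Q$ under $\Psi$, hence also (after applying $\iota$, and noting $\iota$ preserves the relevant annihilator structure, or simply arguing directly that $\Psi(\iota(\text{anything in }I),f)=0$ by the $Q$-equivariance of $f$) the value $\Psi(\iota\circ\sym(X),f)(g)$ depends only on the class $\proj\circ\iota\circ\sym(X) = w$. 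So the pairing $\langle w, Df(g)\rangle$ is well-defined and the argument goes through.

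I expect the main obstacle to be bookkeeping the left-versus-right conventions cleanly: being careful that $\Psi$ is defined by \emph{right} translation $f \mapsto f(g e^{tX})$ while $dL$ is defined by \emph{left} translation, and verifying that $\iota(X) = -X$ is exactly the anti-involution that converts one into the other on all of $U(\mathfrak{g})$ (not just on $\mathfrak{g}$), together with checking compatibility with the $Q$-equivariance so that the passage to the quotient $U(\mathfrak{g})/I(\mathfrak{q},\lambda)$ causes no loss. Once the identity $\Psi(\iota(u),f) = dL(u)f$ is established for $u \in U(\mathfrak{g})$, acting on $f \in C^\infty(G,\sigma_\nu)^Q$, the lemma is immediate since $\{Df(g) = 0 \ \forall g\} \iff \{\langle w, Df(g)\rangle = 0 \ \forall w \in W, \forall g\}$ by nondegeneracy of the pairing between $W$ and $W^\vee$, and $W$ spans via $\proj\circ\iota\circ\sym$ applied to $F$.
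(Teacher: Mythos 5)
There is a genuine gap at the central step. Your claimed identity $\Psi(\iota(u),f)=dL(u)f$ for $u\in U(\mathfrak{g})$ is false as stated: for $Y\in\mathfrak{g}_0$ one has $\Psi(\iota(Y),f)(g)=\tfrac{d}{dt}\big|_{t=0}f(ge^{-tY})=\tfrac{d}{dt}\big|_{t=0}f(e^{-t\Ad(g)Y}g)=dL(\Ad(g)Y)f(g)$, so right differentiation at the point $g$ is converted into \emph{left} differentiation only up to conjugation by $\Ad(g)$; the inversion/anti-involution heuristic you invoke would apply if you compared $f$ with $g\mapsto f(g^{-1})$, but here both sides act on the same $f$, and the $\Ad(g)$ twist cannot be removed. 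More generally $\Psi(\iota(X),f)(g)=dL(\Ad(g)(X))f(g)$ for $X\in U(\mathfrak{g})$, which is exactly the identity the paper computes. Consequently, from $Df=0$ you only get $dL(\Ad(g)\sym(X))f(g)=0$ for all $X\in F$ and $g\in G$, not directly $dL(\sym(F))f=0$.

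The missing ingredient that closes this gap --- and the reason the lemma is true --- is that $F$ is a $\mathfrak{g}$-submodule under the adjoint action and $\sym$ is a $\mathfrak{g}$-isomorphism, so $\sym(F)$ is $\Ad(G)$-stable ($G$ being connected). Hence for each fixed $g$ the set $\Ad(g)\sym(F)$ coincides with $\sym(F)$, and quantifying over $X\in F$ in the twisted condition is the same as quantifying over $Y\in\sym(F)$ in the untwisted one; this yields the stated equality of kernels. Your proposal never uses the $\Ad$-invariance of $F$, and without it the argument does not go through. The remaining points you raise (reduction of $Df=0$ to vanishing of $\Psi(\iota\circ\sym(X),f)$ via nondegeneracy of the pairing, and well-definedness modulo $I(\mathfrak{q},\lambda)$ through the factorization \eqref{eq:factor}) are correct but are already built into Definition~\ref{df:D} and are not where the content of the lemma lies.
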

\begin{proof}
Let $f\in C^{\infty}(G,\sigma_{\nu})^Q$. 
Then $Df=0$ is equivalent to $\Psi(\iota(X),f)(g)=0$ for $X\in \sym(F), g\in G$. 
For $n\in\N, X_1,\ldots,X_n\in\mathfrak{g}_0, f\in C^{\infty}(G,\sigma_{\nu})$ and $g\in G$, we have 
\begin{align}
&\Psi(\iota(X_1X_2\cdots X_n),f)(g)
=\left.\frac{d^n}{dt_l\cdots dt_n}\right|_{t_1=\cdots=t_n=0}f(ge^{-t_nX_n}\cdots e^{-t_1X_1})\\
&=\left.\frac{d^n}{dt_1\cdots dt_n}\right|_{t_1=\cdots=t_n=0}f(e^{-t_n\Ad(g)(X_n)}\cdots e^{-t_1\Ad(g)(X_1)}g)\\
&=dL(\Ad(g)(X_1X_2\cdots X_n))f(g).
\end{align}
Therefore $\Psi(\iota(X),f)(g)=dL(\Ad(g)(X))f(g)$ for $X\in U(\mathfrak{g}), f\in C^{\infty}(G,\sigma_{\nu})$, $g\in G$. 
Hence $Df=0$ is equivalent to $dL(Y)f(g)=0$ for $Y\in\sym(F), g\in G$, and the proof is complete. 
\end{proof}

In the following proposition, the term ``$(a)$-minimal'' means ``minimal'' if $\g$ is not of type $A$, and ``$a$-minimal'' if $\mathfrak{g}=\mathfrak{sl}(n,\C)$ $(n\ge 3)$. 
\begin{prop}
\label{prop:cov diff}
In the setting of Definition~\ref{df:D}, 
the following hold. 
\begin{enumerate}
\item
\label{ind:min and D}
Any $(a)$-minimal subrepresentation of $C^{\infty}(G,\sigma_{\nu})^Q$ is contained in $\Ker D$. 
\item
\label{ind:comp factor}
Any infinite-dimensional irreducible subquotient of $\Ker D$ is an $(a)$-minimal representation. 
\end{enumerate}
\end{prop}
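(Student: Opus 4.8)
The key mechanism is Lemma \ref{lem:left to right}, which identifies $\Ker D$ with the functions $f$ satisfying $dL(\sym(F))f = 0$, i.e.\ the functions killed by the left action of $\sym(F)$. For part \eqref{ind:min and D}, let $V \subset C^{\infty}(G,\sigma_{\nu})^Q$ be an $(a)$-minimal subrepresentation. By Definition \ref{df:D}, $F$ is either the $\g$-module complement of the trivial submodule in $S^2(\g)$ (non-type-$A$ case) or $F^a$ (type $A$). In the non-type-$A$ case, the Joseph ideal contains $\sym(E)$ where $E$ is the sum of the nontrivial, non-$\g$ irreducible submodules of $S^2(\g)$ — in fact the Joseph ideal is generated by $\sym$ of the full complement of the trivial submodule together with a Casimir shift, so $\sym(F)$ annihilates $V$. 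In the type-$A$ case, $J_a$ contains $\sym(F^a)$ by Definition \ref{df:Ja}. So in either case $dL(\sym(F))$ kills $V$, hence $V \subset \Ker D$ by Lemma \ref{lem:left to right}. (One should be slightly careful: the annihilator of the $(\g,\mathfrak{k})$-module is an ideal in $U(\g)$, and $dL$ of that ideal kills $\mathfrak{k}$-finite vectors, hence by density/continuity kills all smooth vectors of $V$; this is the standard passage from the $(\g,\mathfrak{k})$-module to the smooth globalization.)

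For part \eqref{ind:comp factor}, let $W$ be an infinite-dimensional irreducible subquotient of $\Ker D$. Since $dL(\sym(F))$ annihilates all of $\Ker D$ (again by Lemma \ref{lem:left to right}), the two-sided ideal $\langle \sym(F)\rangle$ generated by $\sym(F)$ — which equals $\mathcal{I}(\overline{\mathcal{O}^{\min}})$-type data: in the non-type-$A$ case it is the Joseph ideal up to the Casimir value, in the type-$A$ case it is $J_a$ up to the Casimir value — annihilates $W$. The remaining point is to pin down the Casimir eigenvalue. Here I would use that $\Ker D$, being a submodule of the induced module $C^{\infty}(G,\sigma_{\nu})^Q$, has infinitesimal character determined by $\lambda = -d\sigma - \nu$ (via $\mathfrak{h}^{\spl}$), so the Casimir $\sym(\Omega)$ acts on $W$ by the scalar attached to that infinitesimal character. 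Thus $\Ann W$ contains $\sym(F)$ and the appropriate $\sym(\Omega) - c$, hence contains $J_a$ (type $A$) or the Joseph ideal (otherwise). Since $W$ is infinite-dimensional and irreducible, its annihilator is a proper primitive ideal containing $J_a$; but $\gr J_a = \mathcal{I}(\overline{\mathcal{O}^{\min}})$ by Theorem \ref{thm:Ja}\eqref{ind:Ja}, and any proper ideal strictly containing $J_a$ has finite codimension (as shown in the proof of Theorem \ref{thm:Ja}, using Garfinkle's decomposition of $S^k(\g)$), contradicting infinite-dimensionality of $W$. Hence $\Ann W = J_a$ (resp.\ the Joseph ideal), so $W$ is $(a)$-minimal. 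In the non-type-$A$ case the analogous facts are exactly the characterization \cite[Theorem 3.1]{GS04} of the Joseph ideal as the unique completely prime ideal with $\gr$ equal to $\mathcal{I}(\overline{\mathcal{O}^{\min}})$.

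The main obstacle I anticipate is the Casimir bookkeeping in part \eqref{ind:comp factor}: one must be sure the Casimir scalar forced on $W$ is precisely the one built into $J_a$ (Definition \ref{df:Ja}), namely $(n-1)(2a+n)(2a-n)/(4n)$, with $a$ the parameter used to define $F = F^a$ in Definition \ref{df:D}. This requires matching the character $\lambda = -d\sigma - \nu$ of $\mathfrak{q}$ against the highest weights $\lambda(i,a)$ of Proposition \ref{prop:Fa} and invoking Remark \ref{rem:inf char}. The cleanest route is: $dL(\sym(F^a))$ already annihilates $\Ker D$, so by Proposition \ref{prop:Fa} (applied to highest weight constituents, or rather its analogue for the induced module — using that $\sym(F^a)$ annihilates the full generalized Verma/principal series only when the infinitesimal-character condition holds) the infinitesimal character must be of the form forced by some $\lambda(i,a)$ with the \emph{same} $a$; then Remark \ref{rem:inf char} gives the Casimir value. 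Everything else is a routine assembly of Lemma \ref{lem:left to right}, Definition \ref{df:Ja}, and Theorem \ref{thm:Ja}.
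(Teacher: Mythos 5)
Your part \eqref{ind:min and D} is the paper's own argument: $\sym(F)$ lies in the Joseph ideal (resp.\ in $J_a$, by Definition~\ref{df:Ja}), so it annihilates any $(a)$-minimal subrepresentation, and Lemma~\ref{lem:left to right} converts this into membership in $\Ker D$; the globalization remark you add is harmless. For part \eqref{ind:comp factor}, however, you take a genuinely different route. The paper never computes a Casimir eigenvalue: it passes to associated graded ideals, noting that $\gr(\Ann\tau)$ contains the image $F'$ of $F$ in $S^2(\g)$ (by Lemma~\ref{lem:left to right}) and contains the trivial submodule $\C\Omega$ simply because $\Ann\tau$ has \emph{some} infinitesimal character, so $\gr(\Ann\tau)\supset\mathcal{I}(\overline{\mathcal{O}^{\min}})$ by Garfinkle; infinite codimension forces equality, and then Theorem~\ref{thm:Ja}~\eqref{ind:Ja} (resp.\ \cite[Theorem 3.1]{GS04}) identifies $\Ann\tau$ as some $J_b$ (resp.\ the Joseph ideal), with $b=a$ pinned down by $\sym(F^a)\subset J_b\Rightarrow b=a$. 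You instead try to prove $\Ann W\supset J_a$ directly, which is exactly why you inherit the ``Casimir bookkeeping'' you flag.

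That bookkeeping is the one place where your plan is not yet airtight. First, extracting the scalar requires the factorization \eqref{eq:factor}: $\sym(\Omega)$ acts on $U(\g)/I(\mathfrak{q},\lambda)\cong U(\g)\otimes_{U(\mathfrak{q})}\C_{\lambda}$ by a scalar, and by $\Ad$-invariance and $\iota(\sym\Omega)=\sym\Omega$ this is also the $dL$-scalar on $C^{\infty}(G,\sigma_{\nu})^Q$ --- fine, but it should be said. Second, because $D$ is built from $\iota\circ\sym(F^a)=\sym(F^{-a})$, nonvanishing of $\Ker D$ forces $\lambda=\lambda(i,-a)$ (Lemma~\ref{lem:nec for Ker D}, Proposition~\ref{prop:Fa}), not $\lambda(i,a)$ as you write; you are saved only because the value in Remark~\ref{rem:inf char} is invariant under $a\mapsto-a$, so the constant does match the one in Definition~\ref{df:Ja}. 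Third, outside type $A$ the same lemma allows $\Ann L(\lambda)=U(\g)\g$, in which case the Casimir scalar is \emph{not} the Joseph value and your step ``hence contains the Joseph ideal'' fails as stated; that branch is in fact harmless, since then $\Ann W$ would contain $\sym(F)$ together with a wrong Casimir shift, an ideal whose graded ideal strictly contains $\mathcal{I}(\overline{\mathcal{O}^{\min}})$ and hence has finite codimension, contradicting $\dim W=\infty$ --- but making this precise is essentially re-running the paper's graded-ideal argument. So your route can be completed, and your final step (any two-sided ideal strictly containing $J_a$ has finite codimension, via Garfinkle) is correct; the trade-off is that the paper's argument gets the parameter and the Casimir constant for free from Theorem~\ref{thm:Ja}, while yours must verify them by hand, and the verification as sketched still has the two loose ends above.
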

\begin{proof}
Since $\sym(F)$ is contained in the Joseph ideal by \cite{Gar82} (when $\g$ is not of type $A$) and in $J_a$ by Definition~\ref{df:Ja} (when $\mathfrak{g}=\mathfrak{sl}(n,\C)$), 
the assertion \eqref{ind:min and D} follows from Lemma~\ref{lem:left to right}. 

Let us prove \eqref{ind:comp factor}. 
Let $\tau$ be an infinite-dimensional irreducible subquotient of $\Ker D$. 
Write $F'$ for the image of $F$ under the projection from $\bigoplus_{0\le i\le 2}S^i(\g)$ to $S^2(\g)$. 
Then $F'$ and the trivial $\g$-submodule of $S^2(\g)$ generates $\mathcal{I}(\overline{\mathcal{O}^{\min}})$ by \cite{Gar82}. 

By Lemma~\ref{lem:left to right}, the annihilator $\Ann\tau$ contains $\sym(F)$. 
Hence $\gr(\Ann\tau)$ contains $F'$. 
Since $\Ann\tau$ has an infinitesimal character, the associated graded ideal $\gr(\Ann\tau)$ contains the trivial $\g$-submodule of $S^2(\g)$. 
Therefore $\gr(\Ann\tau)$ contains $\mathcal{I}(\overline{\mathcal{O}^{\min}})$. 
Since the ideal $\Ann\tau$ has infinite codimension in $U(\g)$, we have $\gr(\Ann\tau)=\mathcal{I}(\overline{\mathcal{O}^{\min}})$. 
Therefore $\tau$ is $(a)$-minimal by Theorem~\ref{thm:Ja} \eqref{ind:Ja} and Remark~\ref{rem:Ja} \eqref{ind:Joseph}. 
Here we used the fact that $\sym(F^a)\subset J_b$ $(b\in\C)$ implies $b=a$ for $\g=\mathfrak{sl}(n,\C)$ $(n\ge 3)$. 
\end{proof}

The following two lemmas give necessary conditions for $\Ker D\neq 0$. 
\begin{lem}
\label{lem:nec for Ker D}
When $\Ker D\neq 0$, 
the annihilator of the irreducible highest weight module $L(\lambda)$ equals 
the Joseph ideal or the augmentation ideal $U(\g)\g$ if $\g$ is not of type $A$, and contains $J_{-a}$ if $\mathfrak{g}=\mathfrak{sl}(n,\C)$. 
\end{lem}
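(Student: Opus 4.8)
The plan is to relate a nonzero element of $\Ker D$ to a highest weight module by passing from the group to the Lie algebra and using the left action. Suppose $0\neq f\in\Ker D$. By Lemma~\ref{lem:left to right}, $f$ is annihilated by $dL(\sym(F))$, where $F=F^a$ (when $\g=\mathfrak{sl}(n,\C)$) or the $\g$-module complement of the trivial submodule in $S^2(\g)$ (otherwise). Consider the cyclic $(\g,K)$-module $U(\g)f$ generated by $f$ under the left regular action; since $f\neq 0$ this module is nonzero, and since $\sym(F)$ lies in the two-sided ideal $J_a$ (resp. the Joseph ideal) by Definition~\ref{df:Ja} (resp. \cite{Gar82}), the annihilator of $U(\g)f$ under $dL$ contains $\sym(F)$. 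The point is that $dL$ and $dR=\Psi$ commute, and $\Psi(\iota(X),f)(g)=dL(\Ad(g)(X))f(g)$ from the proof of Lemma~\ref{lem:left to right}; combined with the covariance $f(gq)=\sigma_\nu(q^{-1})f(g)$, evaluating at $g=e$ shows that the functional $v\mapsto f(e)$ (paired suitably) factors through a cyclic quotient of $U(\g)\otimes_{U(\mathfrak{q})}(\sigma_\nu)^\vee$, i.e. of a generalized Verma module.

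Next I would extract a genuine highest (or lowest) weight module. The induced picture gives, upon restricting attention to a lowest weight vector for $\mathfrak{q}$, a nonzero $\g$-map from $U(\g)\otimes_{U(\mathfrak{q})}\C_{\lambda}$ to the space generated by $f$ under $dL$, where $\lambda=-d\sigma-\nu$ is the weight attached to $(\sigma_\nu)^\vee$. Hence $L(\lambda)$ is a subquotient of $U(\g)f$ (taking the unique irreducible quotient of the generalized Verma module, which surjects onto the irreducible quotient $L(\lambda)$ of the ordinary Verma module $M(\lambda)$). Since $\sym(F)$ annihilates $U(\g)f$ and hence $L(\lambda)$, Proposition~\ref{prop:Fa} forces $\lambda=\lambda(i,-a)$ for some $1\le i\le n$ (the sign is $-a$ because $D$ is built from $\iota\circ\sym(F^a)$ and $\iota$ is the principal anti-automorphism, which sends $J_a$ to $J_{-a}$; equivalently one works with the contragredient of $\sigma_\nu$). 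In the non-type-$A$ case, Proposition~\ref{prop:Fa}'s analogue from \cite{Gar82} similarly pins down $\Ann L(\lambda)$ to be either the Joseph ideal or — in the degenerate case where $L(\lambda)$ is finite-dimensional — the augmentation ideal $U(\g)\g$. By Theorem~\ref{thm:Ja}\eqref{ind:JaL} (resp. Remark~\ref{rem:Ja}\eqref{ind:Joseph}), $\Ann L(\lambda)$ equals $J_{-a}$ exactly when the integrality condition \eqref{eq:infdim} fails to hold for $i=1$ or $i=n$ — but since we only need the containment $J_{-a}\subset\Ann L(\lambda)$, this already follows from $\lambda=\lambda(i,-a)$ and Proposition~\ref{prop:Fa}, giving the stated conclusion.

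The main obstacle I anticipate is the bookkeeping of which weight and which sign of $a$ actually appears: one must be careful that the generalized Verma module relevant to $C^\infty(G,\sigma_\nu)^Q$ has highest weight governed by $(\sigma_\nu)^\vee$ rather than $\sigma_\nu$, and that the anti-involution $\iota$ in Definition~\ref{df:D} is what converts a left-ideal statement ($dL(\sym F^a)f=0$) into a right-ideal/annihilator statement producing $J_{-a}$; tracking this through the identification $\Psi(\iota(X),f)(g)=dL(\Ad(g)(X))f(g)$ is the delicate part. A secondary point requiring a little care is the degenerate case in the non-type-$A$ statement, where $L(\lambda)$ may be the trivial (or another finite-dimensional) module and its annihilator is $U(\g)\g$ rather than the Joseph ideal; this is exactly why the statement of the lemma phrases the conclusion as "the Joseph ideal or the augmentation ideal" in that case.
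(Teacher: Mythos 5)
Your argument has a genuine gap at its central step: the claim that evaluation at the identity produces a nonzero $\g$-map from $U(\g)\otimes_{U(\mathfrak{q})}\C_{\lambda}$ into the $dL$-module $U(\g)f$, and hence that $L(\lambda)$ is a subquotient of $U(\g)f$. What actually comes out of \eqref{eq:factor} is a $\g$-invariant pairing $\langle\proj(u),f\rangle:=\Psi(u,f)(e)$ between the generalized Verma module $U(\g)/I(\mathfrak{q},\lambda)$ and $C^{\infty}(G,\sigma_{\nu})^Q$, i.e.\ a map from the induced representation into the full algebraic dual of the generalized Verma module --- the opposite direction from what you need, and a nonzero map into a full dual does not make $L(\lambda)$ a subquotient of $U(\g)f$. (Note that $\lambda$ is a weight for the maximally split Cartan; there is no reason a highest weight module for that Cartan should occur as a subquotient of a submodule of $C^{\infty}(G,\sigma_{\nu})^Q$, and in general it does not.) Two smaller points: your justification that $\sym(F)$ annihilates $U(\g)f$ ``because $\sym(F)\subset J_a$'' is a non sequitur --- the correct reason is $dL(\sym(F))f=0$ (Lemma~\ref{lem:left to right}) together with the $\Ad$-invariance of $\sym(F)$ --- and evaluating at $g=e$ only helps after translating $f$ so that $f(e)\neq 0$ (the paper fixes $g_0$ with $f(g_0)\neq 0$).

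Even granting the subquotient claim, your bookkeeping yields the wrong sign: what kills $U(\g)f$ under $dL$ is $\sym(F^{a})$, so your transfer would force $\lambda=\lambda(i,a)$ and give $J_{a}\subset\Ann L(\lambda)$, whereas the lemma asserts $J_{-a}\subset\Ann L(\lambda)$; the parenthetical ``the sign is $-a$ because of $\iota$'' is exactly the statement requiring proof, and it is where the paper's actual argument lives. The paper avoids any module-theoretic transfer: for a weight-zero vector $X\in\iota\circ\sym(F^a)=\sym(F^{-a})$, the PBW theorem and $\g=\overline{\mathfrak{n}}+\mathfrak{q}$ give $X\equiv c\bmod I(\mathfrak{q},\lambda)$ for some scalar $c$; since $\iota(X)\in\sym(F^a)$ and $\sym(F^a)$ is $\Ad$-invariant, Lemma~\ref{lem:left to right} gives $0=dL(\Ad(g_0)\iota(X))f(g_0)=\Psi(X,f)(g_0)=cf(g_0)$, hence $c=0$ and $X\in I(\mathfrak{q},\lambda)$; then the argument of Lemma~\ref{lem:AnnL} shows $\sym(F^{-a})$ annihilates $L(\lambda)$, Proposition~\ref{prop:Fa} gives $\lambda=\lambda(i,-a)$, and Remark~\ref{rem:inf char} supplies the Casimir eigenvalue so that $J_{-a}\subset\Ann L(\lambda)$; for $\g$ not of type $A$ the final step instead cites \cite[Proposition 5.3]{BJ98} (not only \cite{Gar82}) to conclude the annihilator is the Joseph ideal or $U(\g)\g$. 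Your proposal is missing this computation, so as written it does not establish the stated containment.
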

\begin{proof} 
Assume $\Ker D\neq 0$. 
Then there exist $f\in\Ker D$ and $g_0\in G$ with $f(g_0)\neq 0$. 
Let us show that $\iota\circ\sym(F)$ annihilates $L(\lambda)$. 
As Lemma~\ref{lem:AnnL}, 
it suffices to show $X\in I(\mathfrak{q},\lambda)$ for any weight vector $X\in \iota\circ\sym(F)$ of weight zero. We write $\overline{\mathfrak{n}}$ for the nilradical of the opposite parabolic of $\mathfrak{q}$. 
By the Poincar{\'e}-Birkhoff-Witt theorem and the decomposition $\g=\overline{\mathfrak{n}}+\mathfrak{q}$, we see $X\in c+I(\mathfrak{q},\lambda)$ for some $c\in\C$. 
By $f\in\Ker D$ and the proof of Lemma~\ref{lem:left to right}, we have $0=dL(\Ad(g_0)\circ\iota(X))f(g_0)=\Psi(X,f)(g_0)=cf(g_0)$. 
Therefore $X\in I(\mathfrak{q},\lambda)$, 
and $\iota\circ\sym(F)$ annihilates $L(\lambda)$. 

When $\g$ is not of type $A$, it follows from \cite[Proposition 5.3]{BJ98} that 
a proper primitive ideal containing $\iota\circ\sym(F)=\sym(F)$ is the Joseph ideal or $U(\g)\g$, and the assertion follows. 
When $\mathfrak{g}=\mathfrak{sl}(n,\C)$, 
it follows from the above argument, $\iota\circ\sym(F)=\sym(F^{-a})$ and Proposition~\ref{prop:Fa} that $\lambda$ equals $\lambda(i,-a)$ for some $i$. 
By Remark~\ref{rem:inf char}, the ideal $J_{-a}$ annihilates $L(\lambda)$.  
\end{proof}

Let $\Delta(\mathfrak{g},\mathfrak{a}_{\min})$ be the set of simple restricted roots, $\Delta_Q$ the subset of $\Delta(\mathfrak{g},\mathfrak{a}_{\min})$ defining the standard parabolic subgroup $Q$. 
When $Q$ is minimal, we have $\Delta_Q=\emptyset$. 
Define 
\[
\Delta_Q^{\nu}:=\{\alpha\in\Delta(\mathfrak{g},\mathfrak{a}^{\spl})\mid \proj(\mathfrak{g}_{-\alpha})=\proj(F_{-\alpha})\},
\]
where $\proj$ is defined in \eqref{df:proj} and 
the subscript $-\alpha$ means the $\mathfrak{a}_{\min}$-weight space of weight $-\alpha$. 
We write $Q^{\nu}=M^{\nu}A^{\nu}N^{\nu}$ for the standard parabolic subgroup corresponding to $\Delta_Q^{\nu}$ and its Langlands decomposition. 
By $\Delta_Q\subset \Delta_Q^{\nu}$, we see $Q\subset Q^{\nu}$. 
Therefore we have the decomposition $\mathfrak{a}=\mathfrak{a}^{\nu}\oplus (\mathfrak{a}\cap\mathfrak{m}^{\nu})$ and $\mathfrak{z}(\mathfrak{m})=(\mathfrak{z}(\mathfrak{m})\cap\mathfrak{m}^{\nu}_{\semi})\oplus\mathfrak{z}(\mathfrak{m}^{\nu})$.  

\begin{lem}
\label{lem:descend}
Assume that $\Ker D$ is nonzero. 
Then the character $\nu$ (resp. $d\sigma$) is zero on $\mathfrak{a}\cap\mathfrak{m}^{\nu}$ (resp. $\mathfrak{z}(\mathfrak{m})\cap\mathfrak{m}^{\nu}_{\semi}$) to define a character of $\mathfrak{a}^{\nu}$ (resp. $\mathfrak{z}(\mathfrak{m}^{\nu})$), and there exists an irreducible representation $\sigma'$ of $M^{\nu}$ satisfying the following conditions.
\begin{enumerate}
\item\label{ind:descend1} The representation space of $\sigma'$ agrees with that of $\sigma$. 
\item\label{ind:descend2} The differentiated action of $(\mathfrak{m}^{\nu}_0)_{\semi}$ is trivial. \item\label{ind:descend3} $\sigma'(m)=\sigma(m)$ for $m\in M$. 
\end{enumerate}
Moreover, we have $\Ker D(Q,\sigma_{\nu})\subset C^{\infty}(G,\sigma'_{\nu})^{Q^{\nu}} \subset C^{\infty}(G,\sigma_{\nu})^{Q}$. 
\end{lem}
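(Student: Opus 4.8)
The plan is to exploit the defining condition of $\Delta_Q^{\nu}$: for each simple restricted root $\alpha\in\Delta_Q^{\nu}$, the spaces $\proj(\mathfrak{g}_{-\alpha})$ and $\proj(F_{-\alpha})$ coincide inside $U(\mathfrak{g})/I(\mathfrak{q},\lambda)$. First I would use Lemma~\ref{lem:left to right} together with the hypothesis $\Ker D\neq 0$ to extract a function $f\in\Ker D$ and a point $g_0\in G$ with $f(g_0)\neq 0$, so that $dL(\Ad(g_0)\,\iota(\sym(F)))f(g_0)=0$ as in the proof of Lemma~\ref{lem:nec for Ker D}. The key computation is then to evaluate the two sides of the equality $\proj(\mathfrak{g}_{-\alpha})=\proj(F_{-\alpha})$ against $f$ at $g_0$. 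The left side produces, up to the $I(\mathfrak{q},\lambda)$-ambiguity, the action of a first-order operator $dL(\Ad(g_0)X_{-\alpha})$ for a root vector $X_{-\alpha}$; the right side, coming from $\sym(F)\subset\sym(F^a)$, lies in $I(\mathfrak{q},\lambda)$ modulo lower-order terms contributing the relevant characters. Matching the two and using $f(g_0)\neq 0$ forces the relevant component of the character $\lambda=-d\sigma-\nu$ along $\alpha$ to vanish on the part of $\mathfrak{a}$ and $\mathfrak{z}(\mathfrak{m})$ sitting inside $\mathfrak{m}^{\nu}$.

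Concretely, I would argue as follows. For $\alpha\in\Delta_Q\subset\Delta_Q^{\nu}$ the vanishing is automatic since $\mathfrak{g}_{-\alpha}\subset\mathfrak{n}^-\cap\mathfrak{m}^{\nu}_{\semi}$ already maps into $I(\mathfrak{q},\lambda)$ and imposes no condition on $\lambda$. For $\alpha\in\Delta_Q^{\nu}\setminus\Delta_Q$, the condition $\proj(\mathfrak{g}_{-\alpha})=\proj(F_{-\alpha})$ means there is $Y_{-\alpha}\in F_{-\alpha}\subset S^2(\mathfrak{g})$ and a root vector $X_{-\alpha}\in\mathfrak{g}_{-\alpha}$ with $\iota\circ\sym(Y_{-\alpha})-X_{-\alpha}\in I(\mathfrak{q},\lambda)$, i.e.\ equal to $X_{-\alpha}$ modulo the left ideal. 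Applying $dL(\Ad(g_0)(\cdot))$ to $f$ at $g_0$: the left-hand term vanishes because $Y_{-\alpha}\in F\subset F^a$ and $f\in\Ker D$; the right-hand term is $dL(\Ad(g_0)X_{-\alpha})f(g_0)$. Since $X_{-\alpha}\in\mathfrak{n}^-$, writing $X_{-\alpha}=X_{-\alpha}^{\mathrm{sym}}+(\text{term in }I(\mathfrak{q},\lambda))$ via PBW along $\mathfrak{g}=\overline{\mathfrak{n}}\oplus\mathfrak{q}$, the scalar it contributes is the value of $-\lambda$ on the $\mathfrak{z}(\mathfrak{m})\oplus\mathfrak{a}$-component of $[\text{stuff}]$, which by the explicit quadratic relations defining $F^a$ (the relations $F(e_1-e_n)^a$ and $F(e_1+e_2-e_{n-1}-e_n)$ for type $A$, resp.\ the Joseph relations otherwise) is exactly the pairing of $\lambda$ with the coroot direction $\alpha^{\vee}$ restricted to $\mathfrak{m}^{\nu}$. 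Forcing this to be zero for all $\alpha\in\Delta_Q^{\nu}\setminus\Delta_Q$ gives $\nu|_{\mathfrak{a}\cap\mathfrak{m}^{\nu}}=0$ and $d\sigma|_{\mathfrak{z}(\mathfrak{m})\cap\mathfrak{m}^{\nu}_{\semi}}=0$. Once these vanishings hold, the character $\lambda$ of $\mathfrak{q}$ descends to a character of $\mathfrak{q}^{\nu}$, hence $\sigma_\nu$ extends to an irreducible representation $\sigma'_\nu$ of $Q^{\nu}$ with representation space that of $\sigma$, trivial $(\mathfrak{m}^{\nu}_0)_{\semi}$-action, and $\sigma'|_M=\sigma$; this yields \eqref{ind:descend1}--\eqref{ind:descend3}, and the inclusion $C^{\infty}(G,\sigma'_\nu)^{Q^{\nu}}\subset C^{\infty}(G,\sigma_\nu)^Q$ is immediate from $Q\subset Q^{\nu}$ and $\sigma'|_Q=\sigma_\nu$ (on $Q$, $\sigma'_\nu$ restricts to $\sigma_\nu$). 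Finally, $\Ker D(Q,\sigma_\nu)\subset C^{\infty}(G,\sigma'_\nu)^{Q^{\nu}}$ follows because $\Ker D$ consists of functions annihilated by $dL(\sym(F))$, which by the same root-vector matching forces $f(gq)=\sigma'_\nu(q^{-1})f(g)$ for $q$ in the extra directions $\mathfrak{m}^{\nu}\ominus\mathfrak{m}$ (one integrates the infinitesimal covariance, using connectedness of the relevant subgroup, or works within $\mathfrak{k}$-finite vectors and the standard $(\mathfrak{g},K)$-module argument).

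The main obstacle I anticipate is the bookkeeping in the middle step: making precise, for each $\alpha\in\Delta_Q^{\nu}\setminus\Delta_Q$, exactly which element $Y_{-\alpha}\in F^a_{-\alpha}$ witnesses $\proj(\mathfrak{g}_{-\alpha})=\proj(F_{-\alpha})$ and extracting the correct scalar from the quadratic expression $\iota\circ\sym(Y_{-\alpha})$ modulo $I(\mathfrak{q},\lambda)$ — essentially a repeat of the computation in the proof of Lemma~\ref{lem:mirabolic} but now at a non-extreme weight, so the root vector part $X_{-\alpha}$ genuinely survives rather than being absorbed. A secondary but routine point is upgrading the infinitesimal covariance on the extra $\mathfrak{m}^{\nu}$-directions to group-level covariance under $Q^{\nu}$; this is where one uses that the functions are smooth (or restricts to $K$-finite vectors) and that $M^{\nu}$ is generated by $M$ together with $\exp$ of the extra root spaces, so the infinitesimal relation propagates to the whole group.
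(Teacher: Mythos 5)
Your overall strategy (nonvanishing of a kernel function, the defining property of $\Delta_Q^{\nu}$, descent of the character, then covariance under $Q^{\nu}$) is the right one, but two steps do not work as you describe. First, the mechanism you give for $\nu|_{\mathfrak{a}\cap\mathfrak{m}^{\nu}}=0$ and $d\sigma|_{\mathfrak{z}(\mathfrak{m})\cap\mathfrak{m}^{\nu}_{\semi}}=0$ is flawed. Applying the witnessing relation $\iota\circ\sym(Y_{-\alpha})\equiv cX_{-\alpha}\bmod I(\mathfrak{q},\lambda)$ to $f\in\Ker D$ yields only $dL(\Ad(g)X_{-\alpha})f(g)=0$ for all $g$, i.e.\ the vanishing of the right derivative of $f$ along $\mathfrak{g}_{-\alpha}$; since $X_{-\alpha}\in\overline{\mathfrak{n}}$ has nonzero $\mathfrak{a}_{\min}$-weight, its decomposition along $\mathfrak{g}=\overline{\mathfrak{n}}\oplus\mathfrak{q}$ has no $\mathfrak{q}$-part, so no scalar of the form ``pairing of $\lambda$ with $\alpha^{\vee}$'' is produced at this stage (that scalar extraction is only available for weight-zero elements, as in Lemma~\ref{lem:nec for Ker D}). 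What is actually needed, and what the paper does, is a bracket argument: the set of $X\in\mathfrak{g}_0$ whose right derivative kills $f$ identically is a Lie subalgebra containing $(\mathfrak{m}_0)_{\semi}$, $\mathfrak{n}_0$ and $(\mathfrak{g}_0)_{-\alpha}$ for $\alpha\in\Delta_Q^{\nu}$, hence containing $(\mathfrak{m}^{\nu}_0)_{\semi}$ and in particular $(\mathfrak{a}_0\cap\mathfrak{m}^{\nu}_0)+(\mathfrak{z}(\mathfrak{m}_0)\cap(\mathfrak{m}^{\nu}_0)_{\semi})$; along such $H$ the right derivative equals $-(d\sigma+\nu)(H)f$, and evaluating at $g_0$ with $f(g_0)\neq0$ gives the vanishing. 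You invoke the generation of $M^{\nu}$ only at the very end, but it is already indispensable here.

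Second, and more seriously, you treat the existence of $\sigma'$ as automatic (``hence $\sigma_{\nu}$ extends to an irreducible representation of $Q^{\nu}$''). It is not: $M^{\nu}$ is disconnected, and a representation of $M^{\nu}$ restricting to $\sigma$ on $M$ with trivial differentiated $(\mathfrak{m}^{\nu}_0)_{\semi}$-action need not exist, because triviality on $\exp((\mathfrak{m}^{\nu}_0)_{\semi})$ can be incompatible with $\sigma$ on $M\cap\exp((\mathfrak{m}^{\nu}_0)_{\semi})$ — for instance for the genuine two-dimensional $\sigma^0$ of $\widetilde{\SL}(3,\R)$, and this very obstruction is how the lemma is later used in the proof of Theorem~\ref{thm:slnR} to force $\lambda=\lambda(2,-a)$. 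The paper must therefore use the hypothesis $\Ker D\neq0$ a second time: since $\sigma$ is irreducible, $V$ is spanned by values of kernel functions, one defines $\sigma'(g)v:=\sum_j a_jf_j(h_jg^{-1})$ for $v=\sum_j a_jf_j(h_j)$ with $f_j\in\Ker D$, and well-definedness, the homomorphism property and smoothness are checked by writing $g\in M^{\nu}$ as a product of elements of $M\cup\exp(\mathfrak{n}_0)\cup\bigcup_{\alpha\in\Delta^{\nu}_Q}\exp((\mathfrak{g}_0)_{-\alpha})$ and using the invariance relations satisfied by kernel functions. Without this construction (or an equivalent compatibility argument on the component group of $M^{\nu}$) your proof does not produce $\sigma'$, and the final step of integrating the infinitesimal covariance has no group representation to integrate to.
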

\begin{proof}
By the assumption, we can take $f\in\Ker D$, $g_0\in G$ with $f(g_0)\neq 0$. 
Then 
\begin{align}\label{eq:descend}
f(gm)=\sigma(m^{-1})f(g),\quad \left.\frac{d}{dt}\right|_{t=0}f(ge^{tX})=0
\end{align}
for $g\in G, m\in M, X\in (\mathfrak{m}_0)_{\semi}\cup\mathfrak{n}_0\cup(\mathfrak{g}_0)_{-\alpha}$ $(\alpha\in\Delta_Q^{\nu})$. 
Moreover, the Lie subalgebra of $\mathfrak{g}_0$ generated by $(\mathfrak{m}_0)_{\semi}, \mathfrak{n}_0$ and $(\mathfrak{g}_0)_{-\alpha}$ $(\alpha\in\Delta_Q^{\nu})$ contains $(\mathfrak{m}^{\nu}_0)_{\semi}$: 
\begin{align}
\label{eq:descend2}
(\mathfrak{m}^{\nu}_0)_{\semi}\subset\langle (\mathfrak{m}_0)_{\semi}, \mathfrak{n}_0, (\mathfrak{g}_0)_{-\alpha}\mid\alpha\in\Delta_Q^{\nu}\rangle. 
\end{align} 
By \eqref{eq:descend} and \eqref{eq:descend2}, we obtain 
$(d\sigma+\nu)(H)f(g_0)=\left.\frac{d}{dt}\right|_{t=0}f(g_0\exp(-tH))
=0$
for $H\in(\mathfrak{a}_0\cap\mathfrak{m}^{\nu}_0)+(\mathfrak{z}(\mathfrak{m}_0)\cap(\mathfrak{m}^{\nu}_0)_{\semi})\subset(\mathfrak{m}^{\nu}_0)_{\semi}$. 
Therefore $\nu$ (resp. $d\sigma$) can be seen as an element in $(\mathfrak{a}^{\nu})^{\vee}$ (resp. $\mathfrak{z}(\mathfrak{m}^{\nu})^{\vee}$). 

We next define a representation $\sigma'$ of $M^{\nu}$. 
Let $g\in M^{\nu}, v\in V$. 
Since $f\neq 0$ and $\sigma$ is finite-dimensional and irreducible, the representation space $V$ is spanned by $f(G)$. 
Hence we can write $v=\sum_{j=1}^m a_jf_j(h_j)$ for some $m\in\N, a_j\in\C, f_j\in\Ker D$ and $h_j\in G$ $(1\le j\le m)$. 
Then we define $\sigma'(g)v$ to be $\sum_{j=1}^m a_jf_j(h_jg^{-1})$. 

Let us check that $\sigma'(g)v$ is well-defined. 
It suffices to show $\sum_{j=1}^m a_jf_j(h_jg^{-1})=0$ assuming $\sum_{j=1}^m a_jf_j(h_j)=0$. 
Since $\mathfrak{m}^{\nu}_0$ is generated by $\mathfrak{m}_0$ and $(\mathfrak{g}_0)_{\pm\alpha}$ $(\alpha\in\Delta_Q^{\nu})$, 
we can write $g=g_1\cdots g_n$ for some $n\in\N$ and \[g_i\in M\cup\exp(\mathfrak{n}_0)\cup\bigcup_{\alpha\in\Delta^{\nu}_Q}\exp((\mathfrak{g}_0)_{-\alpha})\quad (1\le i\le n).\]
By \eqref{eq:descend}, we see $\sum_{j=1}^m a_jf_j(h_jg^{-1})=\sigma(\prod_{1\le i\le n, g_i\in M}g_i)\sum_{j=1}^m a_jf_j(h_j)=0$. 
Hence $\sigma'(g)v$ does not depend on $f$ and the expression of $v$. 

We easily see that $\sigma'(g)\in \GL(V)$ and $\sigma'$ is a group homomorphism from $M^{\nu}$ to $\GL(V)$. 
In a small neighborhood of the neutral element in $M^{\nu}$, any element $g$ can be smoothly decomposed into $g_1g_2$ for some $g_1\in\exp(\mathfrak{z}(\mathfrak{m}^{\nu}_0)), g_2\in\exp((\mathfrak{m}^{\nu}_0)_{\semi})$, and $\sigma'(g)=\sigma(g_1)$ by \eqref{eq:descend} and \eqref{eq:descend2}. 
Therefore $\sigma'$ is a smooth representation of $M^{\nu}$. 
The conditions \eqref{ind:descend1}, \eqref{ind:descend2} and \eqref{ind:descend3} and $f\in C^{\infty}(G,\sigma'_{\nu})^{Q^{\nu}}$ follow directly from the definition of $\sigma'$. 
\end{proof}

\section{Classification of minimal representations}\label{sec:classification}
Let $\g_0$ be a real form of $\mathfrak{sl}(n,\C)$ $(n\ge 2)$. 
By Remark~\ref{rem:minimal} \eqref{ind:necessary}, if 
there is a minimal $(\mathfrak{g},\mathfrak{k})$-module, then $\mathfrak{g}_0$ is isomorphic to $\mathfrak{su}(p,q)$ $(p,q>0, p+q=n)$ or $\mathfrak{sl}(n,\R)$. 
In this section, we classify $a$-minimal $(\mathfrak{g},\mathfrak{k})$-modules for $\mathfrak{g}_0=\mathfrak{su}(p,q)$ $(p,q>0, p+q=n)$, $\mathfrak{sl}(n,\R)$ $(n\ge 3)$ for any $a\in\C$. 

We remark that when $\mathfrak{g}_0=\mathfrak{sl}(2,\R)\cong\mathfrak{su}(1,1)$, any primitive ideal with infinite-codimension can be written as $J_a$ for some $a\in\C$. 
Therefore an irreducible $(\mathfrak{g},\mathfrak{k})$-module is minimal if and only if it is infinite-dimensional, and the classification of minimal $(\mathfrak{g},\mathfrak{k})$-module is well-known (see \cite[Sections II.1.2--3]{HT92}, for example). 
As for the classification of unitarizable ones, see \cite[Theorems II.1.1.3 and 1.1.5]{HT92}. 

Let us first consider the case $\g_0=\mathfrak{su}(p,q)$ $(p,q>0, n=p+q\ge 3)$ and $\mathfrak{k}_0=\mathfrak{su}(p)\oplus\mathfrak{su}(q)$. We take a compact Cartan subalgebra $\mathfrak{t}_0$ of $\g_0$ as diagonal matrices. We use the notation in Section~\ref{sec:primitive ideals}. 

\begin{thm}\label{thm:supq}
Let $\mathfrak{g}_0=\mathfrak{su}(p,q)$ $(p,q>0, n=p+q\ge 3)$, $a\in\C$. 
The following $(\mathfrak{g},\mathfrak{k})$-modules are $a$-minimal and any $a$-minimal $(\mathfrak{g},\mathfrak{k})$-module is isomorphic to some of them. 
\begin{enumerate}
\item
In the case $p=1$, 
\begin{alignat}{3}
L(\lambda(1,a))\quad&a\not\in n/2+\N,\quad&L(\lambda(1,-a))^{\vee}\quad&a\not\in-n/2-\N,\\
L(\lambda(2,a))\quad&a\in (n-2)/2+\N,\quad&L(\lambda(2,-a))^{\vee}\quad&a\in -(n-2)/2-\N.
\end{alignat}
\item
In the case $q=1$, 
\begin{alignat}{3}
L(\lambda(n,a))\quad&a\not\in -n/2-\N,\quad&L(\lambda(n,-a))^{\vee}\quad&a\not\in n/2+\N,\\
L(\lambda(n-1,a))\quad&a\in -(n-2)/2-\N,\quad&L(\lambda(n-1,-a))^{\vee}\quad&a\in (n-2)/2+\N.
\end{alignat}
\item
In the case $p,q>1$, 
\begin{alignat}{3}
L(\lambda(p,a))\quad&a\in -(p-q)/2-\N,\quad&L(\lambda(p,-a))^{\vee}\quad&a\in (p-q)/2+\N,\\
L(\lambda(p+1,a))\quad&a\in -(p-q)/2+\N,\quad&L(\lambda(p+1,-a))^{\vee}\quad&a\in (p-q)/2-\N.
\end{alignat}
\end{enumerate}
Furthermore, these minimal $a$-modules are not isomorphic to each others except for the ones caused by 
\begin{align}
\lambda(p,-(p-q)/2)&=\lambda(p+1,-(p-q)/2).
\end{align}
The $\mathfrak{k}$-types of an $a$-minimal $(\mathfrak{g},\mathfrak{k})$-module $L(\lambda(i,a))$ (resp. $L(\lambda(i,-a))^{\vee}$)
are multiplicity free and the set of highest weights is given by $\lambda(i,a)+\N(-e_p+e_{p+1})$ (resp. $-w_l\lambda(i,-a)+\N(e_1-e_n)$). 
Here $w_l$ denotes the longest element of the Weyl group for $(\mathfrak{k},\mathfrak{t})$. 
\end{thm}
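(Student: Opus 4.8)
The plan is to split the argument into a \emph{construction} part and an \emph{exhaustion} part, exploiting the fact that $\mathfrak{su}(p,q)$ is Hermitian, so Corollary~\ref{cor:ht}~\eqref{ind:ht} forces every minimal $(\mathfrak{g},\mathfrak{k})$-module to be an irreducible highest or lowest weight module. For the construction, I would first observe that the contragredient functor sends $a$-minimal modules to $(-a)$-minimal modules (Remark~\ref{rem:minimal}~\eqref{ind:dualconj}), so it suffices to produce the highest weight modules $L(\lambda(i,a))$ on the listed side and obtain the $L(\lambda(i,-a))^{\vee}$ by dualizing. That $L(\lambda(i,a))$ is $a$-minimal for the stated ranges of $a$ is exactly Theorem~\ref{thm:Ja}~\eqref{ind:JaL} combined with the infinite-dimensionality condition \eqref{eq:infdim}: one must check that for the relevant $i$ (namely $i=1,2$ when $p=1$; $i=n-1,n$ when $q=1$; $i=p,p+1$ when $p,q>1$) the weight $\lambda(i,a)$ is $\Sigma(\mathfrak{k},\mathfrak{t})^+$-dominant, i.e.\ that $L(\lambda(i,a))$ is actually a $(\mathfrak{g},\mathfrak{k})$-module (harmonic/unitarizable-type highest weight module with $\mathfrak{k}$-finite weight spaces). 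This dominance on $\mathfrak{k}_{\semi}=\mathfrak{su}(p)\oplus\mathfrak{su}(q)$ amounts to the inequalities $\lambda(i,a)_1\ge\cdots\ge\lambda(i,a)_p$ and $\lambda(i,a)_{p+1}\ge\cdots\ge\lambda(i,a)_n$ holding automatically from \eqref{eq:lambdaia} (the first $p$ and last $q$ coordinates are already constant except at slot $i$), plus the single integrality/positivity constraint at the junction, which is precisely what cuts $a$ down to the arithmetic progressions listed in each case.

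For the $\mathfrak{k}$-type computation, I would apply Proposition~\ref{prop:properties}~\eqref{ind:k-type} (with the normalization $V^{\mathfrak{g}_{\psi}}=0$ from Remark~\ref{rem:faithful}, and recalling that for $\mathfrak{su}(p,q)$ the center of $\mathfrak{k}$ is nontrivial so only one of $V^{\mathfrak{g}_{\psi}},V^{\mathfrak{g}_{-\psi}}$ vanishes, matching the highest/lowest weight dichotomy) together with the second displayed line of \eqref{eq:line}: highest weights of $\mathfrak{k}$-types lie in $-\rho(\mathfrak{k}^1,\mathfrak{t})+\bigl(\C(e_1-e_n)+\C(e_1+e_n-\tfrac2n\sum_i e_i)\bigr)$. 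Pinning the $\mathfrak{z}(\mathfrak{k})$-coordinate by the infinitesimal-character/Casimir relation (as in the $\mathfrak{su}(1,2)$ computation at the end of the proof of Corollary~\ref{cor:ht}, where $\mu_2=-(a+1)/2$ was forced) reduces the weights of $\mathfrak{k}$-types to a single line $\lambda(i,a)+\C(e_p-e_{p+1})$; the dominant integral ones form $\lambda(i,a)+\N(-e_p+e_{p+1})$, giving the asserted pencil, and multiplicity-freeness is Proposition~\ref{prop:properties}~\eqref{ind:multiplicity}. The dual statement with $-w_l\lambda(i,-a)+\N(e_1-e_n)$ follows by transport through the contragredient.

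For exhaustion I would argue: let $V$ be any $a$-minimal $(\mathfrak{g},\mathfrak{k})$-module. By Corollary~\ref{cor:ht}~\eqref{ind:ht}, $V$ is a highest weight module $L(\mu)$ (or a lowest weight one, handled by taking contragredient to reduce to the highest weight case, which is $(-a)$-minimal). Then $\Ann L(\mu)=J_a$ forces, by Theorem~\ref{thm:Ja}~\eqref{ind:JaL}, $\mu=\lambda(i,a)$ for some $i$ satisfying \eqref{eq:infdim}; and $\mathfrak{k}$-finiteness/$\Sigma(\mathfrak{k},\mathfrak{t})^+$-dominance of $\mu$ restricts $i$ to $\{1,2\}$, $\{n-1,n\}$, or $\{p,p+1\}$ according to the three cases, and restricts $a$ to the stated progressions — this is the same bookkeeping as the construction, read backwards. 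The non-isomorphism assertions follow from Theorem~\ref{thm:Ja}~\eqref{ind:Ja} ($J_a$ determines $a$ for $n\ge3$) together with comparison of lowest/highest weights, the single coincidence $\lambda(p,-(p-q)/2)=\lambda(p+1,-(p-q)/2)$ being a direct substitution into \eqref{eq:lambdaia}; here the $\mathfrak{k}$-type description is what distinguishes $L(\lambda(i,a))$ from $L(\lambda(j,-a))^{\vee}$ when both happen to be $a$-minimal. The main obstacle I anticipate is the case analysis at the junction coordinate: verifying precisely which half-integer shifts of $a$ make $\lambda(i,a)$ simultaneously $\mathfrak{k}$-dominant and keep $L(\lambda(i,a))$ infinite-dimensional, and checking that no spurious $i\notin\{p,p+1\}$ (etc.) can occur — i.e.\ confirming that a $\mathfrak{k}$-dominant $\lambda(i,a)$ with $2\le i\le n-1$ genuinely requires $i\in\{p,p+1\}$, which uses the rigidity of the coordinate pattern in \eqref{eq:lambdaia} against the block sizes $p,q$.
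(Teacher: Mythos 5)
Your plan is correct and follows essentially the same route as the paper's proof: Corollary~\ref{cor:ht}~\eqref{ind:ht} to force highest/lowest weight, Remark~\ref{rem:minimal}~\eqref{ind:dualconj} to reduce to the highest weight case, Theorem~\ref{thm:Ja}~\eqref{ind:JaL} plus the $\Sigma(\mathfrak{k}_{\semi},\mathfrak{k}_{\semi}\cap\mathfrak{t})^+$-dominance bookkeeping on $\lambda(i,a)$ (which pins $i$ to the junction slots and $a$ to the stated progressions), and the coincidence relation among the $\lambda(i,a)$ for the non-isomorphism statement. The only cosmetic difference is that you re-derive the pencil $\mathfrak{k}$-type formula from Proposition~\ref{prop:properties}~\eqref{ind:k-type} and \eqref{eq:line}, whereas the paper simply cites Corollary~\ref{cor:ht}~\eqref{ind:pencil}, which rests on the same machinery.
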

\begin{rem}\label{rem:supq}
\begin{enumerate}
\item
Since $-\lambda(i,a)_j=-\overline{\lambda(i,\overline{a})_j}$ for any $1\le j\le n$, we have $L(\lambda(i,a))^{\vee}\cong\overline{L(\lambda(i,\overline{a}))}$.
\item\label{ind:pqqp}
As in the proof of Lemma~\ref{lem:mirabolic}, 
there exists an isomorphism $\mathfrak{su}(p,q)\cong\mathfrak{su}(q,p)$ where 
the $\mathfrak{su}(p,q)$-module $L(\lambda(i,a))$ maps to the $\mathfrak{su}(q,p)$-module $L(\lambda(n+1-i,-a))$.
\item\label{ind:supq unitarizable}
By \cite[Theorem 7.4]{EHW83} or \cite{Jak83}, 
when $L(\lambda(i,a))$ is $a$-minimal $(\mathfrak{g},\mathfrak{k})$-module, it is unitarizable if and only if 
\begin{enumerate}
\item[(i)]
$p=1, i=1$ and $a\in (-\infty, 2-n/2]\cup \Set{n/2-i|1\le i\le n-3}$, 
\item[(ii)]
$q=1, i=n$ and $a\in \Set{-n/2+i|1\le i\le n-3}\cup(-2+n/2,\infty]$ or 
\item[(iii)]
$i\neq 1,n$. 
\end{enumerate}
\item
Recall that $\rO(2,4)$ is locally isomorphic to $\SU(2,2)$. 
The $(\mathfrak{g},\mathfrak{k})$-module of the irreducible unitary representation of $\rO(2,4)$ constructed in \cite{BZ91, KO03i} is isomorphic to $L(\lambda(2,0))\oplus L(\lambda(2,0))^{\vee}$, and the two irreducible components are $0$-minimal. 
\end{enumerate}
\end{rem}

\begin{proof}[Proof of Theorem~\ref{thm:supq}]
By Corollary~\ref{cor:ht} \eqref{ind:ht}, any $a$-minimal $(\mathfrak{g},\mathfrak{k})$-module is a highest or lowest weight module. 
From Remark~\ref{rem:minimal} \eqref{ind:dualconj},  
it suffices to classify $a$-minimal highest weight $(\mathfrak{g},\mathfrak{k})$-modules. 
Theorem~\ref{thm:Ja} \eqref{ind:JaL} shows that the irreducible highest weight $\mathfrak{g}$-module $L(\lambda)$ is an $a$-minimal $(\mathfrak{g},\mathfrak{k})$-module if and only if the highest weight $\lambda$ is written as $\lambda(i,a)$ for some $1\le i\le n$, satisfies \eqref{eq:infdim}, and is dominant integral with respect to $\Sigma(\mathfrak{k}_{\semi},\mathfrak{k}_{\semi}\cap\mathfrak{t})^+$. 
From the expression \eqref{eq:lambdaia} of $\lambda(i,a)$ and the fact that 
$\lambda(i,a)=\lambda(i',a')$ if and only if $a=a'$ and $(i',a')=(i,a), (i-1,n/2-i+1), (i+1,n/2-i)$ hold, 
we obtain the classification of $a$-minimal $(\mathfrak{g},\mathfrak{k})$-modules. 

The assertion on $\mathfrak{k}$-types follows from Corollary~\ref{cor:ht} \eqref{ind:pencil}. 
\end{proof}

The rest of this section is devoted to the case $\mathfrak{g}_0=\mathfrak{sl}(n,\R)(n\ge 3)$. 
We will construct an $a$-minimal $(\mathfrak{g},\mathfrak{k})$-module as the $\mathfrak{k}$-finite vectors of the kernel of an intertwining differential operator between parabolically induced representations. 
Furthermore, the construction will be applied to show the nonexistence of some $a$-minimal $(\mathfrak{sl}(3,\C),\mathfrak{so}(3,\C))$-modules (see Proposition~\ref{prop:kernel sl3R}). 

Let $\mathfrak{h}_0$ be the split Cartan subalgebra consisting of diagonal matrices in $\g_0$. 
Take a positive system $\Sigma(\mathfrak{g},\mathfrak{h})^+$ of restricted roots as in Section~\ref{sec:primitive ideals}. 
Fix a standard parabolic subalgebra $\mathfrak{q}_0$ of $\mathfrak{g}_0$. 
Let $G$ be the universal cover $\widetilde{\SL}(n,\R)$ of $\SL(n,\R)$, $K$ the inverse image of $\SO(n)$, $Q=MAN$ the standard parabolic subgroup with Lie algebra $\mathfrak{q}_0$ and its Langlands decomposition, $\sigma$ an irreducible representation of the component group of $M$, 
and $\nu$ a character of the Lie algebra $\mathfrak{a}$. 
In Section~\ref{subsec:cov diff}, we defined an intertwining differential operator
\begin{align*}
D=D(Q,\sigma_{\nu})\colon C^{\infty}(G,\sigma_{\nu})^{Q}\to C^{\infty}(G,(\proj\circ\iota\circ\sym (F^a))^{\vee}\otimes\sigma_{\nu})^{Q}. 
\end{align*}
We write $(\Ker D)_K, C^{\infty}(G,\sigma_{\nu})^{Q}_K$ for the space of $K$-finite vectors of $\Ker D, C^{\infty}(G,\sigma_{\nu})^{Q}$, respectively. 

Let us state the classification result for $\mathfrak{g}_0=\mathfrak{sl}(n,\R)$. 
Put a subset $Z$ of $\N$ to be 
\[Z:=\begin{cases}
\N\cap(|a|-n/2-2\N)&\text{ if $a\in\R$,}\\
\emptyset &\text{ otherwise.}
\end{cases}
\]

\begin{thm}\label{thm:slnR}
\begin{enumerate}
\item\label{ind:linear}
Let $Q_{1}=M_1A_1N_1$ be the standard parabolic subgroup of $G$ whose Lie algebra is a real form of $\mathfrak{q}_{(1,n-1)}$. 
Write $\triv$ for the trivial character of $M_1$, and $\sgn$ for the nontrivial character of $M_1$. 
Then the infinite-dimensional subquotients of $C^{\infty}(G,\triv_{-\lambda(1,-a)})^{Q_1}_K$, 
$C^{\infty}(G,\sgn_{-\lambda(1,-a)})^{Q_1}_K$, whose $\mathfrak{k}$-type decompositions are 
\begin{align}\label{eq:slnR k-type}
\bigoplus_{k\in 2\N\setminus Z}\mathcal{H}^k(\R^n), 
\bigoplus_{k\in (2\N+1)\setminus Z}\mathcal{H}^k(\R^n), 
\end{align}
are $a$-minimal, respectively. 
Here $\mathcal{H}^k(\R^n)$ denotes the space of harmonic polynomials on $\R^n$ of homogeneous degree $k$, which is irreducible as a $\mathfrak{k}$-module. 
\item\label{ind:genuine}
Assume $n=3$ and $a\in\Z$. Let $B=MAN$ be the standard Borel subgroup of $G$. Write $\sigma^0$ for the irreducible two-dimensional representation of $M\cong \{\pm 1,\pm \mathbf{i},\pm \mathbf{j},\pm \mathbf{k}\}$. 
Then $\Ker D(B,\sigma^0_{-\lambda(2,-a)})_K$ is $a$-minimal and isomorphic to
\[\bigoplus_{k\ge 0}
S^{2|a|+1+4k}\C^2
\]
as $\mathfrak{k}$-modules, where 
$S^k\C^2$ denotes the $k$-th symmetric power  of the two-dimensional irreducible $\mathfrak{k}$-module $\C^2$. 
\end{enumerate}
Moreover, any $a$-minimal $(\mathfrak{g},\mathfrak{k})$-module is isomorphic to one of the above. 
\end{thm}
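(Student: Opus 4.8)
For the exhaustion claim of Theorem~\ref{thm:slnR}---that every $a$-minimal $(\mathfrak{g},\mathfrak{k})$-module for $\mathfrak{g}_0=\mathfrak{sl}(n,\R)$ $(n\ge 3)$ is isomorphic to one of the modules produced in parts~\eqref{ind:linear} and~\eqref{ind:genuine}---the plan is to run the $\mathfrak{k}$-type argument of Section~\ref{sec:minimal reps}, exactly as in the classification for $\mathfrak{g}_0$ not of type $A$ (the argument being that of \cite{Tam19}), and to settle the phenomena peculiar to $n=3$ by means of the construction in Section~\ref{subsec:cov diff}. Let $V$ be an $a$-minimal $(\mathfrak{g},\mathfrak{k})$-module. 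By Remark~\ref{rem:faithful} we may assume, after reversing the positive system if necessary, that $V^{\mathfrak{g}_{\psi}}=0$; this leaves $\Ann V=J_a$, hence the scalar $a$, unchanged. Proposition~\ref{prop:properties}~\eqref{ind:k-type}, together with the identification $\mathfrak{t}_{\Heis}^{\perp}=\C\psi$ from \eqref{eq:line} (recall $\mathfrak{g}_0\not\cong\mathfrak{su}(p,q)$), then forces the highest weight of every $\mathfrak{k}$-type of $V$ to lie on the complex line $\C\psi$. Since $\mathfrak{p}\cong\mathcal{H}^2(\R^n)$ as a $\mathfrak{k}$-module, with $\psi$ its highest $\mathfrak{t}$-weight, the $K$-dominant integral weights on this line are precisely the highest weights of the harmonic spaces $\mathcal{H}^k(\R^n)$ $(k\in\N)$ when $n\ge 4$, and---since $K=\Spin(3)$---of $S^m\C^2$ $(m\in\N)$ when $n=3$; genuine ones thus occur only for $n=3$, where they are those with $m$ odd.

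Next I would use \eqref{eq:pwts}: the $\mathfrak{t}$-weights of $\mathfrak{p}$ lying on $\C\psi$ are $\{\pm\psi\}$ for $n\ge 4$ and $\{\pm\psi/2,\pm\psi\}$ for $n$ odd. Together with Proposition~\ref{prop:properties}~\eqref{ind:multiplicity} and the irreducibility of $V$ this shows that the $\mathfrak{k}$-types of $V$ form a connected ``interval'' in the sequence above, so $V$ is a pencil module and is determined by its lowest $\mathfrak{k}$-type $\tau_0$ via Proposition~\ref{prop:properties}~\eqref{ind:criterion}. To pin down $\tau_0$ I would impose $J_a\subset\Ann V$: evaluating the zero-weight generators $\sym(F^a_0)$ of $J_a$ on a highest weight vector of $\tau_0$---just as in the $\mathfrak{su}(1,2)$ case of the proof of Corollary~\ref{cor:ht}~\eqref{ind:pencil}---and matching the Casimir eigenvalue computed in Remark~\ref{rem:inf char} give scalar equations confining the highest weight of $\tau_0$ to a short explicit list; discarding the possibilities that would force $V$ to be finite-dimensional (an $a$-minimal module is infinite-dimensional, since $\gr J_a=\mathcal{I}(\overline{\mathcal{O}^{\min}})$ has infinite codimension) then leaves exactly the two parity classes of $\mathcal{H}^k(\R^n)$, truncated from below by the set $Z$ when $a\in\R$, together with---when $n=3$ and $a\in\Z$---the genuine class with bottom $S^{2|a|+1}\C^2$. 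For each surviving $\tau_0$, parts~\eqref{ind:linear} and~\eqref{ind:genuine} furnish an $a$-minimal module containing $\tau_0$, and Proposition~\ref{prop:properties}~\eqref{ind:criterion} identifies it with $V$; this also reproduces the counts in Table~\ref{table:number}.

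The step I expect to be the main obstacle is ruling out an $a$-minimal $V$ whose $\mathfrak{k}$-types form a \emph{finer} pencil than the listed ones---with step $\psi/2$ rather than $\psi$---which would fuse two of the pencils above. For $n\ge 4$ this cannot happen for a cheap reason: among the $\mathfrak{k}$-constituents of $\mathfrak{p}\otimes\mathcal{H}^k(\R^n)$, only $\mathcal{H}^{k-2}(\R^n)$, $\mathcal{H}^k(\R^n)$, $\mathcal{H}^{k+2}(\R^n)$ lie on the line $\C\psi$, so the weight-lattice constraint of Proposition~\ref{prop:properties}~\eqref{ind:k-type} already forbids a step of $\psi/2$, and no new modules arise. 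For $n=3$ the extra weight $\pm\psi/2$ of $\mathfrak{p}=\mathcal{H}^2(\R^3)$ genuinely permits such a step---there is no off-line room in rank one---so one must show directly that no $a$-minimal $(\mathfrak{sl}(3,\C),\mathfrak{so}(3,\C))$-module has the corresponding intermediate $\mathfrak{k}$-type. By Lemma~\ref{lem:left to right} and Proposition~\ref{prop:cov diff} this is equivalent to the injectivity of the intertwining differential operator $D$ on the offending $\mathfrak{k}$-isotypic components, which is the content of Proposition~\ref{prop:kernel sl3R}. Granting that, every $a$-minimal $(\mathfrak{g},\mathfrak{k})$-module has one of the $\mathfrak{k}$-type decompositions in \eqref{eq:slnR k-type} or in part~\eqref{ind:genuine}, hence is isomorphic to the module constructed there; this finishes the classification and, as a byproduct, confirms that minimal $(\mathfrak{g},\mathfrak{k})$-modules always have pencil $\mathfrak{k}$-types, including when $n$ is odd.
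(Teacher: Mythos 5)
Your proposal only addresses the exhaustion statement (it takes parts \eqref{ind:linear} and \eqref{ind:genuine} as given), and for the non-genuine lattice $\N\psi/2$ your route is essentially the paper's: Proposition~\ref{prop:properties}~\eqref{ind:k-type} with \eqref{eq:line} puts all $\mathfrak{k}$-types among the $\mathcal{H}^k(\R^n)$, and then infinite-dimensionality plus the common-$\mathfrak{k}$-type criterion of Proposition~\ref{prop:properties}~\eqref{ind:criterion} identifies $V$ with one of the modules of \eqref{ind:linear}; your intermediate step of ``pinning down the lowest $\mathfrak{k}$-type by evaluating $\sym(F^a_0)$ and the Casimir'' is both unexecuted (note $F^a_0$ is a zero weight space for the \emph{split} Cartan, not for $\mathfrak{t}$; the analogous $\mathfrak{t}$-computation is exactly the $\Xi$-map argument, which only yields the lattice) and unnecessary, since sharing a single $\mathfrak{k}$-type already forces the isomorphism and hence the truncation by $Z$.

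The genuine gap is in the genuine case $n=3$. You assert that the nonexistence of genuine $a$-minimal modules with the ``offending'' $\mathfrak{k}$-types (and, implicitly, for $a\notin\Z$) ``is equivalent to the injectivity of $D$ on the offending isotypic components, which is the content of Proposition~\ref{prop:kernel sl3R}.'' That reduction is not justified: Proposition~\ref{prop:kernel sl3R} computes the kernel of the single operator $D(B,\sigma^0_{-\lambda(2,-a)})$, and neither Lemma~\ref{lem:left to right} nor Proposition~\ref{prop:cov diff} tells you that an \emph{abstract} genuine $a$-minimal $(\mathfrak{g},\mathfrak{k})$-module must occur inside that particular induced representation, let alone with that particular parameter. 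The paper supplies precisely this missing bridge: Casselman's subrepresentation theorem embeds $V$ into some $C^{\infty}(G,\sigma_{-\lambda})^B_K$, Proposition~\ref{prop:cov diff}~\eqref{ind:min and D} places it inside $\Ker D(B,\sigma_{-\lambda})$, genuineness forces $\sigma=\sigma^0$, and then Lemma~\ref{lem:descend} (nondescent to $\SL(3,\R)$) together with Lemma~\ref{lem:lambda2a} forces $\lambda=\lambda(2,-a)$; only after that does Proposition~\ref{prop:kernel sl3R} (the hypergeometric recursion giving the $K$-types $S^{2|a|+1+4k}\C^2$ and the vanishing for $a\notin\Z$) finish the argument. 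Without this chain your argument cannot exclude a genuine $a$-minimal module with bottom $\mathfrak{k}$-type in the other mod-$4$ class, below $S^{2|a|+1}\C^2$, or with $a\notin\Z$, which is exactly the hard part of the exhaustion (and of the count in Table~\ref{table:number} for $\mathfrak{sl}(3,\R)$).
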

\begin{rem}\label{rem:slnR}
\begin{enumerate}
\item
When $n=3$ and $a=0$, the representation $\Ker D(B,\sigma^0_{-\lambda(2,0)})$ is infinitesimally equivalent to the one given by Torasso \cite{Tor83}, and the realization as the kernel of an intertwining differential operator was given by Kubo and \O rsted \cite{KO19}. 
The $K$-type formula in Theorem~\ref{thm:slnR} \eqref{ind:genuine} (more precisely, Proposition \ref{prop:kernel sl3R} in a different parametrization) was obtained by their recent work \cite[Theorem 1.7 (5)]{KO21}. 

For $a=\pm 1$, the $a$-minimal representation of $\widetilde{\SL}(3,\R)$ which does not descend to the representation of $\SL(3,\R)$ appeared in \cite[Example 12.4]{Vog91}. 
\item\label{ind:slnR unitarizable}
If an $a$-minimal $(\mathfrak{g},\mathfrak{k})$-module is unitarizable, then $a\in\im\R$ by Remark~\ref{rem:minimal} \eqref{ind:dualconj}. 
Conversely, assume $a\in\im\R$. 
Theorem~\ref{thm:slnR} says that an $a$-minimal $(\mathfrak{g},\mathfrak{k})$-module is associated to a composition factor of unitary principal series or Torasso's representation, which is unitary. 
Therefore it is unitarizable. 
\end{enumerate}
\end{rem}

\begin{proof}[Proof of Theorem~\ref{thm:slnR} \eqref{ind:linear}]
By Theorem~\ref{thm:Ja} \eqref{ind:JaL}, the finite-dimensional $\mathfrak{g}$-modules whose annihilators include $J_a$ are the space of homogeneous polynomials $\mathcal{P}^{-a-n/2}(\R^n)$ of degree $-a-n/2$ if $-a-n/2\in\N$ and the contragredient of $\mathcal{P}^{a-n/2}(\R^n)$ if $a-n/2\in\N$, and do not exist otherwise. Here the $\mathfrak{g}$-action on $\mathcal{P}^{|a|-n/2}(\R^n)$ is induced by the symmetric power of the contragredient of the $n$-dimensional defining representation of $\mathfrak{g}$ when $|a|-n/2\in\N$. 
As $\mathfrak{k}$-modules, they are isomorphic to 
\begin{align}\label{eq:finite}
\bigoplus_{k\in Z}\mathcal{H}^k(\R^n)
\end{align}
by the theory of harmonic polynomials. 

By Lemma~\ref{lem:mirabolic}, $\iota\circ\sym(F^a)=\sym(F^{-a})$ annihilates the generalized Verma module $U(\g)\otimes_{U(\mathfrak{q}_1)}\C_{\lambda(1,-a)}$. 
Hence we have $\Ker D(Q_1,\sigma_{-\lambda(1,-a)})=C^{\infty}(G,\sigma_{-\lambda(1,-a)})^{Q_1}$ for any $\sigma=\triv,\sgn$ and Proposition~\ref{prop:cov diff} \eqref{ind:comp factor} shows that any infinite-dimensional composition factor is $a$-minimal. 
By the Frobenius reciprocity and the branching law for the pair $(\mathfrak{so}(n),\mathfrak{so}(n-1))$ (see \cite[Proposition 9.16]{Kna02} for example), the $\mathfrak{k}$-type decomposition of $C^{\infty}(G,\sigma_{-\lambda(1,-a)})^{Q_1}_K$ is given by 
\begin{align}\label{eq:linear k-type}
\bigoplus_{k\in 2\N+\epsilon}\mathcal{H}^{k}(\R^n)
\text{,\quad where }
\epsilon=\begin{cases}
0&\text{ if $\sigma=\triv$,}\\
1&\text{ if $\sigma=\sgn$.}
\end{cases}
\end{align}

There exists an irreducible infinite-dimensional subquotient $V$ of $C^{\infty}(G,\sigma_{-\lambda(1,-a)})^{Q_1}_K$ as $Z$ does not contain $2\N+\epsilon$. 
Let $\psi$ be the highest weight of $\mathfrak{k}$-module $\mathfrak{p}$ as in Section \ref{sec:minimal reps}. 
Since the highest weight $\mathcal{H}^k(\R^n)$ is $k\psi/2$, 
we see $\g\cdot \mathcal{H}^{k}(\R^n)\subset\mathcal{H}^{k-2}(\R^n)+\mathcal{H}^{k}(\R^n)+\mathcal{H}^{k+2}(\R^n)$ as subspaces of $C^{\infty}(G,\sigma_{-\lambda(1,-a)})^{Q_1}_K$. 
Therefore the $\mathfrak{k}$-type decomposition of $V$ is written as $\bigoplus_{k\in 2\N+k_0}\mathcal{H}^k(\R^n)$ for some $k_0\in\N$, and the multiplicity of $V$ is one by \eqref{eq:linear k-type}. 

Assume that $C^{\infty}(G,\sigma_{-\lambda(1,-a)})^{Q_1}_K$ has a composition factor $V'$ other than $V$. 
The $\mathfrak{k}$-type decomposition of $V$ shows that $V'$ is finite-dimensional. 
By the above argument, $\|a\|-n/2\in\N$ and $V'$ is isomorphic to $\mathcal{P}^{-a-n/2}(\R^n)$ if $-a-n/2\in\N$ and the contragredient of $\mathcal{P}^{a-n/2}(\R^n)$ if $a-n/2\in\N$. 
By \eqref{eq:finite} and \eqref{eq:linear k-type}, the multiplicity of $V'$ is one and there are no composition factors other than $V, V'$. 

Conversely, if $\|a\|-n/2\in\N$ and $C^{\infty}(G,\sigma_{-\lambda(1,-a)})^{Q_1}_K$ contains a $\mathfrak{k}$-type $\mathcal{H}^{k}(\R^n)$ for some $k\in Z$, then there exists a finite-dimensional composition factor by Proposition~\ref{prop:properties} \eqref{ind:criterion} and \eqref{eq:finite}. 

By the above argument, if $(2\N+\epsilon)\cap Z=\emptyset$, $C^{\infty}(G,\sigma_{-\lambda(1,-a)})^{Q_1}_K$ is irreducible (hence $a$-minimal). If $(2\N+\epsilon)\cap Z\neq\emptyset$, the composition factors of $C^{\infty}(G,\sigma_{-\lambda(1,-a)})^{Q_1}_K$ consist of an $a$-minimal $(\mathfrak{g},\mathfrak{k})$-module whose $\mathfrak{k}$-type decomposition is as in \eqref{eq:slnR k-type} and one of the above finite-dimensional $\mathfrak{g}$-modules, and their multiplicities are one, which proves \eqref{ind:linear}. 
\end{proof}

\section{Genuine minimal representations of $\widetilde{\SL}(3,\R)$
}\label{subsec:sl3R}
Let $\g_0=\mathfrak{sl}(3,\R)$. We use the notation in Sections \ref{subsec:cov diff} and \ref{sec:classification}. 
In this section, we explicitly describe 
the space of $\mathfrak{k}$-finite vectors of the kernel of $D(B,\sigma^0_{-\lambda(2,-a)})$ and prove the rest of Theorem~\ref{thm:slnR}. 

We first fix notations for a covering map from $\SU(2)$ onto $\SO(3)$. 
Let $\Ha=\R1+\R\ib+\R\jb+\R\kb$ be the quaternion algebra. 
We regard $\Ha$ as a vector space over $\C=\R1+\R\ib$ by the right multiplication, take $\Set{1,\jb}$ as a basis, and identify $\Ha$ with $\C^2$. 
The standard Hermitian inner product on $\C^2$ induces an inner product on $\Ha$ given by $(\xb, \yb)=(\xb\overline{\yb}+\yb\overline{\xb})/2$ for $\xb,\yb\in\Ha$, where $\overline{\yb}$ denotes the conjugate of $\yb$. 
Since the left multiplication by a unit quaternion is a $\C$-vector space homomorphism and preserves the norm on $\Ha$, the group of unit quaternions are identified with $\SU(2)$. 
We also identify the imaginary part of $\Ha$ with $\R^3$ by $b\ib+c\jb+d\kb\leftrightarrow
{}^t\!(b,c,d)$ for $b,c,d\in\R$. 
Since the imaginary part of $\Ha$ is stable under the conjugation by a unit quaternion, we obtain a two-to-one homomorphism 
\begin{align}
K=\SU(2)\to\SO(3);\ \yb\mapsto (\xb\mapsto\yb\xb\overline{\yb}).
\end{align}
The differentiation and the complexification give the isomorphism of Lie algebras
\begin{align}
\mathfrak{sl}(2,\C)\cong\mathfrak{so}(3,\C);
\frac{1}{2}\begin{pmatrix}
a\im&-b+c\im\\
b+c\im&-a\im
\end{pmatrix}
\leftrightarrow\begin{pmatrix}
0&c&b\\
-c&0&-a\\
-b&a&0
\end{pmatrix}\ \ (a,b,c\in\C).\label{eq:sl2so3}
\end{align}

We next fix notation for the irreducible representations of $K$. 
Let $m\in\N$ and $\mathcal{P}_m[t]$ the space of polynomials of degree up to $m$ with variable $t$. 
The linear action of $K$ on $\mathcal{P}_m[t]$ defined by 
\begin{align}\label{eq:P_m}
g^{-1}q(t)
:=(\beta t+\overline{\alpha})^mq((\alpha t-\overline{\beta})/(\beta t+\overline{\alpha}))
\end{align}
for $g=\begin{pmatrix}
\alpha&-\overline{\beta}\\
\beta&\overline{\alpha}
\end{pmatrix}\in K, q\in\mathcal{P}_m[t]$ gives us an irreducible $(m+1)$-dimensional representation. 
Then the differentiated action $\pi_m$ of $\mathfrak{sl}(2,\C)$ on $\mathcal{P}_m[t]$ is given by 
\begin{align}
\pi_m\left(\begin{pmatrix}1&0\\0&-1\end{pmatrix}\right)&=m-2t\frac{d}{dt},\ 
\pi_m\left(\begin{pmatrix}0&1\\0&0\end{pmatrix}\right)&=-\frac{d}{dt},\label{eq:diff action}\ 
\pi_m\left(\begin{pmatrix}0&0\\1&0\end{pmatrix}\right)&=-mt+t^2\frac{d}{dt}.
\end{align}

For $m\in\N$, we define a nondegenerate $K$-invariant pairing $\langle,\rangle_m$ of the $m$-th symmetric power $S^m\C^2$ 
and $\mathcal{P}_m[t]$ by the linear extension of 
\begin{align}
\left\langle 
\begin{pmatrix}1\\0\end{pmatrix}^i\begin{pmatrix}0\\1\end{pmatrix}^{m-i}, t^j\right\rangle_m:=
\begin{cases}
i!(m-i)!/m!&\text{ if $i=j$,}\\
0&\text{ otherwise}
\end{cases}
\end{align}
for $0\le i, j\le m$. 

We regard $\sigma^0$ as the restriction of the defining representation $\C^2$ of $K$ to $M$. 
By the Frobenius reciprocity, the restriction map induces an isomorphism from 
$C^{\infty}(G,\sigma^0_{-\lambda(2,-a)})^B$ to $C^{\infty}(K,\sigma^0)^M$
as $K$-modules. Since $\mathcal{P}_m[t]$ is isomorphic to the dual of $S^m\C^2$ as $K$-modules, we have a linear injection 
\begin{align}\label{eq:Phi_m}
\Phi_m\colon S^m\C^2\otimes(\mathcal{P}_m[t]\otimes \sigma^0)^M\to C^{\infty}(K,\sigma^0)^M;\ 
\Phi_m\left(v\otimes\begin{pmatrix}q_1(t)\\q_2(t)\end{pmatrix}\right)(k)
:=\begin{pmatrix}\langle v,kq_1(t)\rangle_m\\\langle v,kq_2(t)\rangle_m\end{pmatrix} 
\end{align}
for ${}^t\!(q_1(t),q_2(t))\in (\mathcal{P}_m[t]\otimes \sigma^0)^M, v\in S^m\C^2, k\in K$. 
Then the direct sum of $\Phi_m$ for $m\ge 0$ gives an isomorphism 
\begin{align}
C^{\infty}(G,\sigma^0_{-\lambda(2,-a)})^Q_K\cong\bigoplus_{m\ge 0}S^m\C^2\otimes(\mathcal{P}_m[t]\otimes \sigma^0)^M.
\end{align}

For $a\in\Z$ and $k\in\N$, we put 
\begin{align}
m(a,k):=2|a|+1+4k
\end{align}
and define a polynomial $q(a,k:t^2)\in\mathcal{P}_{m(a,k)}[t]$ by 
\begin{align}
q(a,k;t^2)&:={}_2F_1(-|a|-1/2-2k,-a/2-|a|/2-k:a/2-|a|/2+1/2-k;t^2)\\
&=\sum_{l=0}^{a/2+|a|/2+k}\frac{(-|a|-1/2-2k)_l(-a/2-|a|/2-k)_l}{l!(a/2-|a|/2+1/2-k)_l}t^{2l}. 
\end{align}
Here ${}_2F_1$ denotes the Gauss hypergeometric function and we use the Pochhammer symbol $(x)_l=x(x+1)\cdots(x+l-1)$. 

The subspace $\Ker D(B,\sigma^0_{-\lambda(2,-a)})_K$ of $C^{\infty}(G,\sigma^0_{-\lambda(2,-a)})^B$ is described explicitly in the following 
\begin{prop}\label{prop:kernel sl3R}
When $a\not\in\Z$, 
$\Ker D(B,\sigma^0_{-\lambda(2,-a)})_K=0$. 
When $a\in\Z$, 
$\Ker D(B,\sigma^0_{-\lambda(2,-a)})_K$ equals
\begin{align}
\begin{cases}
\bigoplus_{k\ge 0}\Phi_{m(a,k)}\left(S^{m(a,k)}\C^2\otimes \begin{pmatrix}-t^{m(a,k)}q(a,k;t^{-2})\\q(a,k;t^2)\end{pmatrix}\right)&\text{ if $a$ is even,}\\
\bigoplus_{k\ge 0}\Phi_{m(a,k)}\left(S^{m(a,k)}\C^2\otimes \begin{pmatrix}q(a,k;t^2)\\-t^{m(a,k)}q(a,k;t^{-2})\end{pmatrix}\right)&\text{ if $a$ is odd,}
\end{cases}
\end{align}
and is isomorphic to $\bigoplus_{k\ge 0}S^{m(a,k)}\C^2$ as $\mathfrak{k}$-modules. 
\end{prop}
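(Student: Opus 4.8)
The plan is to transport the equation $Df=0$ to the compact picture and solve it one $K$-isotypic component at a time. Since $D=D(B,\sigma^0_{-\lambda(2,-a)})$ is $G$-equivariant, its kernel is a sum of full $K$-isotypic components of the source, so by the identification recorded above of $C^{\infty}(G,\sigma^0_{-\lambda(2,-a)})^{B}_K$ with $\bigoplus_{m\ge 0}S^m\C^2\otimes(\mathcal{P}_m[t]\otimes\sigma^0)^M$ it suffices to determine, for each $m$, the subspace $V_m\subseteq(\mathcal{P}_m[t]\otimes\sigma^0)^M$ annihilated by $D$. First I would compute $(\mathcal{P}_m[t]\otimes\sigma^0)^M$ itself: the central element $-1\in M$ acts by $(-1)^m$ on $\mathcal{P}_m[t]$ and by $-1$ on $\sigma^0$, so the space vanishes for even $m$, while for odd $m=2l+1$ a character count over $M$ gives $\dim(\mathcal{P}_m[t]\otimes\sigma^0)^M=l+1$, together with an explicit basis of pairs ${}^t(q_1(t),q_2(t))$ in which $q_1$ and $q_2$ have opposite parities and are interchanged, up to a sign and the factor $t^m$, by $t\mapsto -1/t$ — exactly the shape of the pairs in the statement, with the sign and the placement of the two entries governed by the action of a chosen generator of $M$. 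In particular $\Ker D(B,\sigma^0_{-\lambda(2,-a)})_K$ automatically lives in odd degrees.

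The heart of the proof is to make $dL(\sym(F^a))$ explicit, using Lemma~\ref{lem:left to right}. For $\g=\mathfrak{sl}(3,\C)$ one has $F^a=F(e_1-e_3)^a$, spanned by $\xi_A:=\sum_{i,j,k}A_{i,j}T_{i,k}T_{k,j}-\tfrac{a}{3}A$ for $A\in\mathfrak{sl}(3,\C)$. For a fixed $A$ I would rewrite each quadratic summand $T_{i,k}T_{k,j}$ in a Poincar\'e--Birkhoff--Witt basis adapted to the Iwasawa decomposition $\g=\mathfrak{k}\oplus\mathfrak{a}\oplus\mathfrak{n}$ and use that on $C^{\infty}(G,\sigma^0_{-\lambda(2,-a)})^{B}$ the right action of $\mathfrak{a}$ is the scalar $-\lambda(2,-a)|_{\mathfrak{a}}$, the right action of $\mathfrak{n}$ is zero, and the right action of $\mathfrak{k}$ is the one the maps $\Phi_m$ transport from \eqref{eq:diff action}. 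Via the Iwasawa cocycle this writes $dL(\sym\xi_A)$ on the $m$-component as a second-order differential operator in $t$ with polynomial coefficients depending affinely on $a$ and on $m$; evaluating it on a lowest $K$-weight vector of $S^m\C^2$ and invoking $K$-covariance collapses the whole system on $V_m$ to a single scalar second-order ordinary differential equation for the component polynomial. After the substitution $s=t^2$ (legitimate because the component has a definite parity) this equation is a Gauss hypergeometric equation, and its solution holomorphic at $s=0$ is the ${}_2F_1$ appearing in the definition of $q(a,k;t^2)$.

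It then remains to keep only those solutions that land in $\mathcal{P}_m[t]$, i.e.\ are polynomials of degree at most $m$. Combined with the constraint that $m$ be a positive odd integer, the truncation conditions for the hypergeometric series leave exactly the possibilities $a\in\Z$ and $m=m(a,k)=2|a|+1+4k$ for some $k\in\N$; in that case $V_{m(a,k)}$ turns out to be one-dimensional, spanned by the pair displayed in the statement (the sign and the ordering of the two entries determined by the parity of $a$ through the $M$-invariance), and $V_m=0$ otherwise. For $a\notin\Z$ no such polynomial solution exists, whence $\Ker D(B,\sigma^0_{-\lambda(2,-a)})_K=0$. Summing the surviving $S^{m(a,k)}\C^2\otimes V_{m(a,k)}$ over $k\ge 0$ produces the asserted description of the kernel, and the isomorphism with $\bigoplus_{k\ge 0}S^{m(a,k)}\C^2$ as a $\mathfrak{k}$-module is then immediate since each surviving isotypic piece is a single copy of $S^{m(a,k)}\C^2$.

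I expect the genuine obstacle to be the second step: pushing the second-order operator $\sym(F^a)$ through the Iwasawa splitting and the $\mathcal{P}_m[t]$-model while correctly tracking the $a$- and $m$-dependence and the coupling between the two $\sigma^0$-components, and then massaging the output into a recognizably hypergeometric equation. Once that equation is in hand, reading off its polynomial solutions and the truncation conditions is routine, and the remaining bookkeeping (dimension counts, matching the $\mathfrak{k}$-types, and comparing with the literature for $a=0,\pm1$) is straightforward.
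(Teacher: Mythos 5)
Your outline follows the paper's strategy: pass to the compact picture, impose the $M$-invariance constraints (odd $m$, the parity of $q_1$ governed by $m\bmod 4$, and $q_2(t)=-t^mq_1(-t^{-1})$), reduce the second-order elements defining $D$ modulo the relations of the induced picture so that they act through $\pi_m$ on $\mathcal{P}_m[t]$, and then read off the truncating (hypergeometric) polynomial solutions, which forces $a\in\Z$ and $m=m(a,k)$. The genuine gap is in the middle step, which you yourself flag as the main obstacle but whose resolution you misattribute. The conditions cutting out $\Ker D$ are indexed by the eight-dimensional space $\iota\circ\sym(F^a)=\sym(F^{-a})\cong\g$, which as a $\mathfrak{k}$-module is $\mathfrak{k}\oplus\mathfrak{p}$. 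Evaluating on a lowest $K$-weight vector of $S^m\C^2$ and invoking $K$-covariance only removes the dependence on the vector in $S^m\C^2$; it does not, by itself, collapse this eight-parameter family of second-order conditions to a single scalar ODE for the multiplicity-space polynomial. The collapse is an algebraic feature of the specific parameter $\lambda=\lambda(2,-a)$, and it is exactly what the paper isolates in Lemma~\ref{lem:lambda2a}: $[\mathfrak{n},\iota\circ\sym(F^a)]\subset I(\mathfrak{b},\lambda)$ holds precisely for $\lambda=\lambda(2,-a)$, and since $\sym(F^{-a})$ is spanned by its lowest weight vector together with $[\mathfrak{n},\sym(F^{-a})]$, the image of the whole space in $U(\g)/I(\mathfrak{b},\lambda(2,-a))$ is one-dimensional, represented by the single element $X=(T_{2,1}-T_{1,2})(T_{3,2}-T_{2,3})+(a+1/2)(T_{3,1}-T_{1,3})\in U(\mathfrak{k})$, whence the clean system $\pi_m(X)q_1=\pi_m(X)q_2=0$. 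Without this (or an equivalent substitute) you either impose only the equation coming from one generator, which a priori gives only an upper bound for the kernel, or you must reduce all eight generators through the Iwasawa PBW and verify by hand that the resulting operators are proportional, which is precisely the content of the lemma you are missing. If you want to keep a covariance-style argument, the correct version is: each tested piece $\Phi_m(S^m\C^2\otimes v)$ is $K$-stable, so it lies in the kernel of the full family if and only if it is killed by the operator attached to a single $A_0$ that generates $F^{-a}$ as a $\mathfrak{k}$-module under the adjoint action (i.e.\ has nonzero components in both constituents $\mathfrak{k}$ and $\mathfrak{p}$); this must be stated and checked, and the resulting operator is then much less pleasant than the paper's $X$.

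Two smaller points. You invoke Lemma~\ref{lem:left to right}, which concerns the left action $dL$, but then use simplifications ($\mathfrak{n}$ acting by zero, $\mathfrak{a}$ by the character) that are valid only for the right differentiation $\Psi$; the clean route is to work directly with $\Psi(\iota\circ\sym(F^a),\cdot)$, i.e.\ with the definition of $D$ and its factorization through $U(\g)/I(\mathfrak{b},\lambda(2,-a))$. Also, there is no coupling of the two $\sigma^0$-components through $D$: once the conditions are represented by elements of $U(\mathfrak{k})$ acting by right differentiation, they act componentwise, and the only coupling of $q_1$ and $q_2$ is the $M$-invariance you already have. With the collapse supplied, the rest of your plan (the recursion or hypergeometric analysis, the truncation conditions, and the parity bookkeeping distinguishing even and odd $a$) coincides with the paper's proof.
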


We use the following lemma in the proof of Proposition~\ref{prop:kernel sl3R}. 
Let $\mathfrak{n}$ be the nilradical of $\mathfrak{b}$. 
\begin{lem}\label{lem:lambda2a}
Let $a\in\C, \lambda\in\mathfrak{h}^{\vee}$. 
Then $[\mathfrak{n},\iota\circ\sym(F^a)]$ is included in $I(\mathfrak{b},\lambda)$ (see \eqref{df:I} for the definition) if and only if $\lambda=\lambda(2,-a)$. 
\end{lem}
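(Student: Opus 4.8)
The plan is to move everything into the Verma module $M(\lambda):=U(\mathfrak{g})\otimes_{U(\mathfrak{b})}\C_{\lambda}$, cut the problem down to a single lowest weight vector of $F^{a}$, and finish with one short second-order computation. Identify $U(\mathfrak{g})/I(\mathfrak{b},\lambda)$ with $M(\lambda)$ via $\proj(X)=Xm_{\lambda}$ for a highest weight vector $m_{\lambda}$, so that $X\in U(\mathfrak{g})$ lies in $I(\mathfrak{b},\lambda)$ exactly when $Xm_{\lambda}=0$. Recalling that $\iota\circ\sym(F^{a})=\sym(F^{-a})$, the claim is that $[\mathfrak{n},\sym(F^{-a})]m_{\lambda}=0$ holds precisely for $\lambda=\lambda(2,-a)$; and since $\mathfrak{n}m_{\lambda}=0$ we have $[\mathfrak{n},\sym(F^{-a})]m_{\lambda}=\mathfrak{n}\cdot\bigl(\sym(F^{-a})m_{\lambda}\bigr)$.

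Here $n=3$, so $F^{-a}=F(e_1-e_3)^{-a}$ (see~\eqref{df:F^a}) is isomorphic to the adjoint module $\mathfrak{g}$ by Proposition~\ref{prop:decomp of S2g}~\eqref{eq:F(1-1)}, the element attached to $A\in\mathfrak{g}$ being $\sum_{i,j,k}A_{i,j}T_{i,k}T_{k,j}+\tfrac{a}{3}A$. Its lowest weight vector, of weight $e_3-e_1$, is $v^{-}:=\sum_{k}T_{3,k}T_{k,1}+\tfrac{a}{3}T_{3,1}$, and a direct check with the relations $[T_{r,s},T_{i,j}]=\delta_{i,s}T_{r,j}-\delta_{r,j}T_{i,s}$ shows that the smallest $\ad(\mathfrak{n})$-stable subspace of $\mathfrak{g}$ containing the lowest root vector $T_{3,1}$ is all of $\mathfrak{g}$; transporting this through the isomorphism and $\sym$ gives $\sym(F^{-a})=\ad(U(\mathfrak{n}))\sym(v^{-})$. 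Using $\mathfrak{n}m_{\lambda}=0$ repeatedly, as in the proof of Lemma~\ref{lem:mirabolic}, one has $(\ad(X_1)\cdots\ad(X_r)Y)m_{\lambda}=X_1\cdots X_r(Ym_{\lambda})$ for $X_i\in\mathfrak{n}$, whence $\sym(F^{-a})m_{\lambda}=U(\mathfrak{n})\cdot(\sym(v^{-})m_{\lambda})$. If $\mathfrak{n}\cdot(\sym(v^{-})m_{\lambda})=0$ then, by induction on the number of factors, $U(\mathfrak{n})\cdot(\sym(v^{-})m_{\lambda})=\C\,\sym(v^{-})m_{\lambda}$, which is again killed by $\mathfrak{n}$; conversely $\sym(v^{-})m_{\lambda}$ itself lies in $\sym(F^{-a})m_{\lambda}$. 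So the lemma reduces to the single condition $\mathfrak{n}\cdot(\sym(v^{-})m_{\lambda})=0$.

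Since $T_{1,3}=[T_{1,2},T_{2,3}]$, it suffices to check that $T_{1,2}$ and $T_{2,3}$ annihilate $\sym(v^{-})m_{\lambda}$. Using $T_{1,2}m_{\lambda}=T_{2,3}m_{\lambda}=0$, the $\ad$-equivariance of $\sym$, and $[T_{2,3},T_{3,1}]=T_{2,1}$, $[T_{1,2},T_{3,1}]=-T_{3,2}$, one gets
\[
T_{2,3}\,\sym(v^{-})m_{\lambda}=\sym\Bigl(\sum_k T_{2,k}T_{k,1}+\tfrac{a}{3}T_{2,1}\Bigr)m_{\lambda},
\qquad
T_{1,2}\,\sym(v^{-})m_{\lambda}=-\sym\Bigl(\sum_k T_{3,k}T_{k,2}+\tfrac{a}{3}T_{3,2}\Bigr)m_{\lambda}.
\]
Expanding each side by pushing the diagonal factors to the right of $m_{\lambda}$, exactly the kind of computation done in the proof of Proposition~\ref{prop:Fa}, yields $T_{2,3}\,\sym(v^{-})m_{\lambda}=(\lambda_1+\lambda_2+\tfrac12+\tfrac{a}{3})\,T_{2,1}m_{\lambda}$ and $T_{1,2}\,\sym(v^{-})m_{\lambda}=-(\lambda_2+\lambda_3-\tfrac12+\tfrac{a}{3})\,T_{3,2}m_{\lambda}$. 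Since $T_{2,1}m_{\lambda}$ and $T_{3,2}m_{\lambda}$ are nonzero in $M(\lambda)$, both vanish if and only if $\lambda_3=\tfrac12+\tfrac{a}{3}$ and $\lambda_1=-\tfrac12+\tfrac{a}{3}$; together with $\sum_i\lambda_i=0$ this forces $\lambda=(\tfrac{a}{3}-\tfrac12,-\tfrac{2a}{3},\tfrac{a}{3}+\tfrac12)=\lambda(2,-a)$ by~\eqref{eq:lambdaia}.

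The only step with real content is the last expansion of the two symmetrized quadratic expressions in $M(\lambda)$; everything before it is formal manipulation, and the reduction to the lowest weight vector $v^{-}$ is precisely what keeps that expansion short.
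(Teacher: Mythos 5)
Your proof is correct and follows essentially the same route as the paper: reduce to the lowest weight vector $\sym\bigl(\sum_k T_{3,k}T_{k,1}+\tfrac{a}{3}T_{3,1}\bigr)$ of $\iota\circ\sym(F^a)=\sym(F^{-a})$, bracket with the simple root vectors $T_{1,2},T_{2,3}$, and extract the two scalar conditions forcing $\lambda=\lambda(2,-a)$; your scalars $(\lambda_1+\lambda_2+\tfrac12+\tfrac{a}{3})$ and $(\lambda_2+\lambda_3-\tfrac12+\tfrac{a}{3})$ agree with the paper's via $\sum_i\lambda_i=0$. The only difference is presentational: you work in the Verma module $M(\lambda)$ and spell out the reduction to the lowest weight vector (via $\ad(U(\mathfrak{n}))$-generation and $\mathfrak{n}m_{\lambda}=0$), which the paper leaves implicit by computing modulo $I(\mathfrak{b},\lambda)$.
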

\begin{proof}
The lowest weight vector $\sym(\sum_{i=1}^3T_{3,i}T_{i,1}+a/3T_{3,1})$ in $\iota\circ\sym(F^a)=\sym(F^{-a})$ is $T_{3,2}T_{2,1}+(-\lambda_2+a/3-1/2)T_{3,1}$ modulo $I(\mathfrak{b},\lambda)$. 
Since we have 
\begin{align}
[T_{1,2},T_{3,2}T_{2,1}+(-\lambda_2+a/3-1/2)T_{3,1}]
&\equiv T_{3,2}(T_{1,1}-T_{2,2})-(-\lambda_2+a/3-1/2)T_{3,2}\\
&\equiv (\lambda_1-a/3+1/2)T_{3,2} \bmod I(\mathfrak{b},\lambda),\\
[T_{2,3},T_{3,2}T_{2,1}+(-\lambda_2+a/3-1/2)T_{3,1}]
&\equiv (T_{2,2}-T_{3,3})T_{2,1}+(-\lambda_2+a/3-1/2)T_{2,1}\\
&\equiv (-\lambda_3+a/3+1/2)T_{2,1} \bmod I(\mathfrak{b},\lambda),
\end{align}
the condition $[\mathfrak{n},\iota\circ\sym(F^a)]\subset I(\mathfrak{b},\lambda)$ is equivalent to $\lambda_1=a/3-1/2$ and $\lambda_3=a/3+1/2$, which says $\lambda=\lambda(2,-a)$. 
\end{proof}

\begin{proof}[Proof of Proposition~\ref{prop:kernel sl3R}]
Since $I(\mathfrak{b},\lambda(2,a))$ is closed under the adjoint action of $\mathfrak{n}$, Lemma~\ref{lem:lambda2a} and its proof show that $\iota\circ\sym(F^a)+I(\mathfrak{b},\lambda(2,-a))$ is spanned by $T_{3,2}T_{2,1}+(-\lambda(2,-a)_2+a/3-1/2)T_{3,1}$, which equals 
\[
X:=(T_{2,1}-T_{1,2})(T_{3,2}-T_{2,3})+(a+1/2)(T_{3,1}-T_{1,3})\in U(\mathfrak{k})
\]
modulo $I(\mathfrak{b},\lambda(2,a))$. 

Let ${}^t\!(q_1(t), q_2(t))$ be a nonzero element in $(\mathcal{P}_m[t]\otimes \sigma^0)^M$. 
By \eqref{eq:P_m}, the condition ${}^t\!(q_1(t), q_2(t))\in(\mathcal{P}_m[t]\otimes \sigma^0)^M$ is equivalent to 
\begin{align}
\begin{pmatrix}i&\\&-i\end{pmatrix}
\begin{pmatrix}q_1(t)\\q_2(t)\end{pmatrix}
&=\begin{pmatrix}\ib^{-1}q_1(t)\\\ib^{-1}q_2(t)\end{pmatrix}
=\begin{pmatrix}(-i)^{m}q_1(-t)\\(-i)^{m}q_2(-t)\end{pmatrix},\\
\begin{pmatrix}&-1\\1&\end{pmatrix}
\begin{pmatrix}q_1(t)\\q_2(t)\end{pmatrix}
&=\begin{pmatrix}\jb^{-1}q_1(t)\\\jb^{-1}q_2(t)\end{pmatrix}
=\begin{pmatrix}t^mq_1(-t^{-1})\\t^mq_2(-t^{-1})\end{pmatrix}.
\end{align}
Hence our assumption on $q_1(t), q_2(t)$ is written as 
\begin{enumerate}
\item\label{ind:odd}
$m$ is odd,
\item\label{ind:ib}
$q_1(t)$ is odd when $m\equiv 1 \bmod 4$, 
$q_1(t)$ is even when $m\equiv 3 \bmod 4$, and
\item\label{ind:jb}
$q_2(t)=-t^mq_1(-t^{-1})\neq 0$.
\end{enumerate}

Let $F\in C^{\infty}(G,\sigma^0_{-\lambda(2,-a)})^{B}$. 
By the decomposition $G=KB$, we see that $F\in \Ker D$ if and only if $DF=0$ on $K$. 
By the definitions of $D$ (see Definition~\ref{df:D}) and $\Phi_m$ \eqref{eq:Phi_m}, the irreducible $K$-submodule $\Phi_m(S^k(\C^2)\otimes {}^t\!(q_1(t), q_2(t)))$ is included in $\Ker D(B,\sigma^0_{-\lambda(2,-a)})$ if and only if $\pi_m(X)q_1(t)=0$ and $\pi_m(X)q_2(t)=0$. 

Under the isomorphism \eqref{eq:sl2so3}, the vector $4X$ corresponds to 
\[
\begin{pmatrix}0&-\im\\-\im&0\end{pmatrix}\begin{pmatrix}\im&0\\0&-\im\end{pmatrix}
+(1+2a)\begin{pmatrix}0&1\\-1&0\end{pmatrix}.
\]
By \eqref{eq:diff action}, the action $\pi_m(4X)$ on $\mathcal{P}_m[t]$ is given by the differential operator 
\[
(-mt+t^2\frac{d}{dt}-\frac{d}{dt})(m-2t\frac{d}{dt})+(1+2a)(mt-t^2\frac{d}{dt}-\frac{d}{dt}).
\]

Let $q(t)=\sum_{l=0}^ka_{l}t^l\in\mathcal{P}_m[t]$. 
Since 
\begin{align}
\pi_m(4X)q(t)&=\sum_la_l(\{(m-2l)(-m+l)+(1+2a)(m-l)\}t^{l+1}\\
&\qquad\qquad+\{(m-2l)(-l)+(1+2a)(-l)\}t^{l-1})\\
&=\sum_la_l\{(m-l)(-m+2l+1+2a)t^{l+1}-l(m-2l+1+2a)t^{l-1}\},
\end{align}
the condition $\pi_m(X)q(t)=0$ is equivalent to 
\[
a_l\frac{l}{2}\left(\frac{l}{2}+k_0-\frac{m}{2}\right)=a_{l-2}\left(\frac{l}{2}-\frac{m+2}{2}\right)\left(\frac{l}{2}-k_0-1\right)
\]
for any $l$. 
Here we put $a_l=0$ for any $l\neq 0,\ldots, k$ and 
\[k_0:=(m-1-2a)/4.\]
By the assumption that $m$ is odd, there is a nonzero polynomial solution to $\pi_m(X)q(t)=0$ if and only if $2k_0\in\{0,2,\ldots,m-1\}$ if and only if $m\in 2|a|+1+4\N$. 
In this case, the solution space is spanned by 
\begin{align}
{}_2\!F_1(-m/2,-k_0;k_0+1-m/2;t^2)
&=\sum_{k=0}^{k_0}\frac{(-m/2)_k(-k_0)_k}{(1)_k(k_0+1-m/2)_k}t^{2k},\\
t^m{}_2\!F_1(-m/2,-k_0;k_0+1-m/2;t^{-2})
&=\sum_{k=0}^{k_0}\frac{(-m/2)_k(-k_0)_k}{(1)_k(k_0+1-m/2)_k}t^{m-2k}.
\end{align}
By the above arguments, 
$q_1, q_2\in\mathcal{P}_m[t]$ satisfies the assumptions \eqref{ind:odd}, \eqref{ind:ib}, \eqref{ind:jb} and $\pi_m(X)q_1(t)=\pi_m(X)q_2(X)=0$ if and only if 
$a\in\Z$, $m=m(a,k)$ for some $k\in\N$ and 
\begin{align}
(q_1(t), q_2(t))\in
\begin{cases}
\C(-t^{m(a,k)}q(a,k:t^{-2}), q(a,k:t^2))&\text{ when $a$ is even,}\\
\C(q(a,k:t^2),-t^{m(a,k)}q(a,k:t^{-2}))&\text{ when $a$ is odd,}
\end{cases}
\end{align}
which is the desired conclusion. 
\end{proof}

Let us finish the proof of Theorem~\ref{thm:slnR}. Let $\g_0=\mathfrak{sl}(n,\R)$. 
\begin{proof}[Proof of the rest of Theorem~\ref{thm:slnR}]
The assertion \eqref{ind:genuine} follows from Proposition~\ref{prop:kernel sl3R}. 

Let us prove the exhaustion of $a$-minimal $(\mathfrak{g},\mathfrak{k})$-modules. 
Let us take a positive system on $\Phi(\mathfrak{g},\mathfrak{h}^{\cpt})$ such that the highest root $\psi$ is noncompact imaginary as in Section~\ref{sec:minimal reps}. 
By $\mathfrak{p}\cong S^2(\C^n)$ as $\mathfrak{k}$-modules, we see $\rho(\mathfrak{k}^1,\mathfrak{t})=n\psi/2$. 
Therefore the description \eqref{eq:line} and Proposition~\ref{prop:properties} \eqref{ind:k-type} show that the highest weight $\lambda$ of a $\mathfrak{k}$-type of an $a$-minimal $(\mathfrak{g},\mathfrak{k})$-module belongs to $\N\psi/2$ if $n\ge 4$ and to $\N\psi/4$ if $n=3$. 
When $\lambda$ belongs to $\N\psi/2$, the $a$-minimal $(\mathfrak{g},\mathfrak{k})$-module 
has a common $\mathfrak{k}$-type with the one appearing in Theorem~\ref{thm:slnR} \eqref{ind:linear} by \eqref{eq:linear k-type}. Therefore they are isomorphic by Proposition~\ref{prop:properties} \eqref{ind:criterion}. 

Hence what is left is to prove that when $\mathfrak{g}=\mathfrak{sl}(3,\C)$, any $a$-minimal $(\mathfrak{g},\mathfrak{k})$-module $V$ which does not exponentiate to a representation of $\SO(3)$ (called genuine) is isomorphic to the one in \eqref{ind:genuine}. 
By Casselman's subrepresentation theorem, we can embed $V$ into $C^{\infty}(G,\sigma_{-\lambda})^B_K$ for some $\sigma\in\widehat{M}, \lambda\in\mathfrak{a}^{\vee}$. 
Proposition~\ref{prop:cov diff} \eqref{ind:min and D} shows that $V$ is contained in $\Ker D(B,\sigma_{-\lambda})$. 
Since $V$ is genuine, $\sigma$ is the two-dimensional irreducible representation $\sigma^0$ of $M$. 
Moreover, since the identity component of any standard parabolic subgroup strictly larger than $B$ contains the center of $G$, Lemma~\ref{lem:descend} shows that $[\mathfrak{n}, \iota\circ\sym(F^a)]$ is included in $I(\mathfrak{b},\lambda)$ so that $\Ker D(B,\sigma^0_{-\lambda})$ does not descend to the representation of $\SL(3,\R)$. 
By Lemma~\ref{lem:lambda2a}, we must have $\lambda=\lambda(2,-a)$. 
Therefore $V$ is isomorphic to the minimal $a$-module in Proposition~\ref{prop:kernel sl3R}, and the proof of Theorem~\ref{thm:slnR} is complete. 
\end{proof}

\section*{Acknowledgements}
The author is deeply grateful to Prof. Toshiyuki Kobayashi for helpful comments and warm encouragement. 
The author expresses his sincere thanks to Prof. David Vogan for insightful comments and letting him know the work by M{\oe}glin \cite{Moe87} and the alternative proof of Corollary~\ref{cor:ht}. 
This work was supported by JSPS KAKENHI Grant Number 17J01075, 20J00024 and the Program for Leading Graduate Schools, MEXT, Japan. 

\bibliographystyle{abbrvnat}
\Addresses

\end{document}